\newtheorem*{rep@theorem}{\rep@title}
\newcommand{\newreptheorem}[2]{%
\newenvironment{rep#1}[1]{%
 \def\rep@title{#2 \ref{##1}}%
 \begin{rep@theorem}}%
 {\end{rep@theorem}}}
\theoremstyle{plain}
\newtheorem{theorem}{Theorem}[section]
\newtheorem{lemma}[theorem]{Lemma}
\newtheorem{proposition}[theorem]{Proposition}
\newtheorem{corollary}[theorem]{Corollary}
\theoremstyle{remark}
\newtheorem{example}[theorem]{Example}
\theoremstyle{definition}
\newtheorem{definition}[theorem]{Definition}
\def\Z{\mathbb Z}
\def\G{\Gamma}
\def\g{\gamma}
\def\<{\langle}
\def\>{\rangle}
\title[The lower central series of $A_\G$]{The lower central series of a right-angled Artin group}
\author{Richard D. Wade}   
\date{\today}
\begin{document}

\begin{abstract}
We give a description of Droms, Duchamp and Krob's extension of Magnus' approach to the lower central series of the free group to right-angled Artin groups. We also describe how Lalonde's extension of Lyndon words to the partially commutative setting gives a simple algorithm to find a basis for consecutive quotients of the lower central series of a RAAG.
\end{abstract}

\address{Department of Mathematics, The University of Utah, 155 S 1400 E, Salt Lake City, UT, 84112}
\email{wade@math.utah.edu}
\subjclass[2010]{20F36, 20F14 (20F12, 20F40)}
\maketitle

\section{Introduction}

One can often translate problems concerning Lie groups to the world of Lie algebras. When we linearise a problem our life is much easier: we understand vector spaces and their endomorphisms very well, and we may use our knowledge here to give us information about the underlying Lie group. This paper looks at how such methods are also beneficial for studying discrete groups, at least in respect to their lower central series.  

 Let $G$ be any group. One may form a Lie $\mathbb{Z}$--algebra by taking the direct sum $\sum_{i=1}^\infty \g_i(G)/\g_{i+1}(G)$, where $\g_i(G)$ is the $i$th term in the lower central series, and the bracket operation is given by taking commutators in $G$. If $G$ is a free group the picture is very nice indeed. The Lie algebra one attains is a \emph{free Lie algebra}, and the structure theory of free Lie algebras allows one to obtain information about free groups and their automorphisms.

This Lie algebra correspondence is well-known. It is covered in detail in Magnus' classic textbook on combinatorial group theory \cite[Chapter 5]{MKS}, and also appears in Bourbaki \cite{MR979493}. The aim of this paper is to give a description of the analogous theory for right-angled Artin groups, or RAAGs. These can be thought of as modified free groups, where some pairs of basis elements are allowed to commute. Any graph $\G$ determines a right-angled Artin group $A_\G$ as follows: suppose that $E$ and $V$ are the edge and vertex sets of $\G$ and let $\iota$ and $\tau$ be the maps that send an edge to its initial and terminal vertices respectively. The group $A_\G$ then has the presentation: 
$$A_\G=\langle v \in V | [\iota(e),\tau(e)] : e \in E \rangle .$$
In particular, there is a generator for each vertex of $\G$ and a commutator relation corresponding to each edge. Graphs with no edges determine free groups, and complete graphs determine free abelian groups. The RAAG moniker is popular in geometric group theory but these groups also enjoy interesting combinatorial and algorithmic properties (particularly in the context of cryptography) and appear throughout the literature as  \emph{(free) partially commutative groups}, \emph{graph groups}, \emph{trace groups}, and \emph{semifree groups}.

After replacing the free Lie algebra above with a \emph{free partially commutative Lie algebra}, the description of the lower central series algebra and its resulting applications also holds in this more general setting. These results are not new, however we feel that a unified summary of key results of Droms, Duchamp, Krob, and Lalonde \cite{Droms,DK1,DK2,L1,L2,KL} will make a useful reference. It is worth noting that Papadima and Suciu also have a beautifully succinct, if less hands-on, proof of this correspondence in their work \cite{MR2207874}.

The algebraic approach in this paper has much wider implications than one might initially guess. The author uses Duchamp and Krob's work in \cite{MR3145135} to give strong restrictions on how higher-rank lattices in semisimple Lie groups can act on right-angled Artin groups. We will see below that this Lie theory viewpoint allows us to prove that RAAGs are residually torsion-free nilpotent; this is used by Linnell, Okun, and Schick in their proof of the strong Atiyah conjecture for RAAGs \cite{MR2922714}. The work of Wise and Agol shows that the fundamental group of every closed hyperbolic 3--manifold is virtually a subgroup of a RAAG \cite{MR3104553}. Such groups are therefore virtually residually torsion-free nilpotent.

The paper is set out as follows: in Section~2 we review a classical construction that builds a a Lie algebra $L_\mathcal{G}$ from any \emph{central filtration} $\mathcal{G}=\{G_i\}_{i=1}^\infty$ of a group. This is a generalisation of the construction of the Lie algebra associated to the lower central series mentioned before. It is functorial in the sense that if you have two central filtrations $\mathcal{G}=\{G_i\}$ and $\mathcal{H}=\{H_i\}$ of groups $G$ and $H$ respectively, and $\phi:G\to H$ is a homomorphism such that $\phi(G_i) \subset H_i$ for all $i$, then there is an induced algebra homomorphism $L_\mathcal{G} \to L_\mathcal{H}$. 

In Section \ref{s:cast} we build up a host of \emph{free partially commutative} objects associated to a right-angled Artin group. Of central importance is the \emph{free partially commutative monoid} $M$, which may be viewed as the monoid of positive elements in $A_\G$. We define $U$ to be the free $\mathbb{Z}$--module on $M$. The module $U$ inherits a graded algebra structure, with the grading coming from word length in $M$. One can extend $U$ to an algebra $U^\infty$ by allowing infinitely many coefficients in a sequence of elements of $M$ to be nonzero. The algebra $U^\infty$ behaves very much like an algebra of formal power series. For instance, if $v_1, \ldots, v_n$ are the generators of $A_\G$ and $\mathbf{v_1},\ldots, \mathbf{v_n}$ are the associated elements of the monoid, then $1+\mathbf{v_i}$ is a unit in $U^\infty$ with inverse $$(1+\mathbf{v_i})^{-1}=1-\mathbf{v_i}+\mathbf{v_i}^2-\mathbf{v_i}^3+\cdots.$$ If we define $U^*$ to be the group of units of $U^\infty$, the mapping $v_i \mapsto 1+\mathbf{v_i}$ gives an embedding $$\mu:A_\G \to U^*,$$ called the \emph{Magnus map}. We define a sequence of subsets $\mathcal{D}=\{D_k\}_{k=1}^{\infty}$ of $A_\G$ by saying that $g \in D_k$ if and only if $\mu(g)$ is of the form: $$\mu(g)=1+ \text{elements of $U$ of degree $\geq k$.} $$

\begin{repproposition}{p:dinfo}  For all $k$, the set $D_k$ is a subgroup of $A_\G$ and these subgroups satisfy: \begin{enumerate} \item $\mathcal{D}$ is a central filtration of $A_\G$. \item $D_{k+1} \trianglelefteq D_k$ and $D_k/D_{k+1}$ is a finitely generated free abelian group.
\item  $\gamma_k(A_\G) \subset D_k.$ \end{enumerate} \end{repproposition}

As $\mu$ is injective $\cap_{k=1}^\infty D_k=\{1\}$, and this fact combined with properties (1) and (2) imply that a right-angled Artin group is residually torsion-free nilpotent. If $\mathcal{C}$ is the central filtration given by the lower central series, then property (3) implies that we have a Lie algebra homomorphism  $L_\mathcal{C} \to L_\mathcal{D}$. We finish our study of the Magnus map by using it to give a new proof of the normal form theorem for words in right-angled Artin groups. 

The algebra $U$ has an associated Lie algebra $\mathcal{L}(U)$ consisting of the elements of $U$ and bracket operation $[a,b]=ab-ba$. In Section \ref{s:Lyndon}, we study the Lie subalgebra $L_\G$ of $\mathcal{L}(U)$ generated by the set $V=\{\mathbf{v_1},\ldots, \mathbf{v_n}\}$. For this, we use Lalonde's description of the \emph{free partially commutative Lie algebra} determined by the graph $\G$ \cite{L1,L2}.   One first defines a subset $LE(M) \subset M$ known as the set of \emph{Lyndon elements} of $M$. These have a very rigid combinatorial structure. In particular there is a way of assigning a bracketing to each Lyndon element; given a subset $X=\{x_1,\ldots,x_r\}$ of a Lie algebra $L$, this bracketing induces a $\mathbb{Z}$--module homomorphism $\phi_X:\mathbb{Z}[LE(M)] \to L$. When $X=V$, the induced map $\phi_V:\mathbb{Z}[LE(M)]\to L_\G$ is an isomorphism. This gives a basis of $L_\G$ as a free $\mathbb{Z}$--module, and allows us to give a universal defining property of $L_\G$:

\begin{reptheorem}{t:univprop}Let $L$ be a Lie algebra, and suppose that $X=\{x_1,\ldots,x_r\}$ is a subset of $L$ that satisfies $$[x_i,x_j]=0 \text{ if $v_i$ and $v_j$ are connected by an edge in $\G$.}$$ Then there is a unique algebra homomorphism $\psi_X:L_\G \to L$ such that $$\psi_X(\mathbf{v_i})=x_i \text{ for $1 \leq i \leq r$.}$$   \end{reptheorem}

We use this in Section \ref{s:end} to construct a chain of algebra homomorphisms $$L_\G \to L_\mathcal{C} \to L_\mathcal{D} \to L_\G $$
and show that the composition of the three maps is the identity on $L_\G$. In fact:

\begin{reptheorem}{t:main}
$L_\G$, $L_\mathcal{C}$, and $L_\mathcal{D}$ are isomorphic as graded Lie algebras. Furthermore, the central filtrations $\mathcal{C}$ and $\mathcal{D}$ are equal, so that $\gamma_k(A_\G)=D_k$ for all $k \geq 1$.
\end{reptheorem}

We are now able to use Lyndon elements and $L_\G$ to describe the lower central series of $A_\G$ in more detail. For instance, Proposition \ref{p:dinfo} now implies:

\begin{reptheorem}{t:tfn1}
If $k \in \mathbb{N}$, then $\g_k(A_\G)/\g_{k+1}(A_\G)$ is free-abelian, and $A_\G/\g_k(A_\G)$ is torsion-free nilpotent.
\end{reptheorem}

We have attempted to make this work as self contained as possible. In particular, we do not assume any results concerning free Lie algebras, which allows the theory of free Lie algebras and the \emph{free partially commutative Lie algebras} studied here to be developed in parallel. This comes at the cost of assuming certain facts about the combinatorics of words in RAAGs. We hope that this trade-off is beneficial for the reader.  A wonderful aspect of Magnus' approach to the study of free groups is how nicely the overall structure of his work translates to right-angled Artin groups. An avid reader is encouraged to compare Section \ref{s:Magnus map} of this paper with Section 5.5 of \cite{MKS}. The statements contained in this paper are adapted to deal with the more general setting of RAAGs, however very little work needs to be done in ensuring the proofs then follow through as well.

The author would like to thank the referee for a careful reading of the paper and helpful advice and Dawid Kielak for a series of helpful comments.

\section{Lie algebras from central filtrations} \label{s:Lie}
Let $G$ be a group.  Let $\mathcal{G}=\{G_k\}_{k\geq1}$ be a sequence of subgroups of $G$ such that for all $k,l$: \begin{align*}\text{(F1)}& \qquad G_1=G, \\  \text{(F2)}& \qquad G_{k+1}\leq G_{k}, \\ \text{(F3)}& \qquad [G_k,G_l]\subset G_{k+l}. \end{align*} We say that $\mathcal{G}$ is a \emph{central filtration}, or a \emph{central series} of $G$. The above conditions imply that $G_k \trianglelefteq G$ and $G_{k+1} \trianglelefteq G_k$ for all $k$. The results in this section are classical (see, for example, Chapter~2 of \cite{MR979493}) and we will state them without proof. 

One example of a central filtration is $\gamma(G)=\{\gamma_k(G)\}_{k\geq1}$, the lower central series of $G$. This is defined recursively by $\gamma_1(G)=G$ and $\gamma_{k+1}(G)=[G,\gamma_{k}(G)].$ Where it is clear which group we are using, we shall simply write $\gamma_k$ (or $\gamma$) rather than $\gamma_k(G)$ (or $\gamma(G)$).  An easy induction argument shows that the lower central series is contained in all central filtrations of $G$:

\begin{proposition}\label{p:centr}
Let $\mathcal{G}=\{G_k\}$ be a central filtration of $G$. Then $\g_k \subset G_k$ for all $k$.
\end{proposition}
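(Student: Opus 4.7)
The plan is to prove the inclusion $\gamma_k \subset G_k$ by straightforward induction on $k$, using the three defining properties of a central filtration listed in the excerpt. Nothing deeper than the axioms (F1)--(F3) should be needed.

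For the base case $k=1$, the definition of the lower central series gives $\gamma_1(G) = G$, while (F1) gives $G_1 = G$, so the inclusion is actually an equality. For the inductive step, I would assume $\gamma_k \subset G_k$ and then compute
\[
\gamma_{k+1}(G) \;=\; [G,\gamma_k(G)] \;=\; [\gamma_1(G),\gamma_k(G)] \;\subset\; [G_1,G_k] \;\subset\; G_{k+1},
\]
where the first containment uses the inductive hypothesis together with (F1), and the second is (F3) with $l=k$ and the first index equal to $1$. This closes the induction.

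There is no genuine obstacle here; the only thing worth being careful about is that the commutator subgroup is monotonic in each argument, i.e.\ if $H \subset H'$ and $K \subset K'$ then $[H,K] \subset [H',K']$. This is immediate from the definition of $[H,K]$ as the subgroup generated by commutators $[h,k]$ with $h \in H$, $k \in K$, but it is what licenses the step $[\gamma_1(G),\gamma_k(G)] \subset [G_1,G_k]$ above. Given that observation, the entire argument is a two-line induction, which is presumably why the excerpt labels it ``an easy induction argument'' and states the result without proof.
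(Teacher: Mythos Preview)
Your proof is correct and matches exactly what the paper has in mind: the paper states this result without proof, merely noting that ``an easy induction argument shows'' it, and your two-line induction using (F1) and (F3) is precisely that argument.
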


Central filtrations tell us about residual properties of groups. We say that a central filtration $\mathcal{G}$ is separating if  $\cap_{k=1}^\infty G_k=\{1\}$. 

\begin{proposition} \label{p:cf}
Suppose that $\mathcal{G}$ is a central filtration of $G$ that is separating. Furthermore, suppose that each consecutive quotient $G_k/G_{k+1}$ is free-abelian. Then:
\begin{enumerate}[(1)]
\item $G_k$ is a normal subgroup of $G$.
\item For all $k$ the group $G/G_k$ is torsion-free nilpotent.
\item $G$ is residually torsion-free nilpotent.
\end{enumerate}
\end{proposition}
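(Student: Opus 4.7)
The strategy is to handle the three conclusions in order, with most of the work going into establishing torsion-freeness in (2).

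For (1), normality of $G_k$ in $G$ is already noted immediately after the filtration axioms: property (F3) gives $[G,G_k]\subset G_{k+1}\subset G_k$, which is exactly the statement that $G_k\trianglelefteq G$. So there is nothing further to prove here beyond quoting the filtration axioms.

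For (2), I would split nilpotency and torsion-freeness. Nilpotency is structural and quick: the descending series $G/G_k \trianglerighteq G_2/G_k \trianglerighteq \cdots \trianglerighteq G_k/G_k = \{1\}$ is a central series of $G/G_k$, because (F3) gives $[G/G_k,\, G_i/G_k]\subset G_{i+1}/G_k$. So $G/G_k$ is nilpotent of class at most $k-1$. Torsion-freeness is the main step; I would prove it by induction on $k$. The base case $k=1$ is trivial since $G/G_1$ is the trivial group. For the inductive step, assume $G/G_k$ is torsion-free and take $g \in G$ with $g^m \in G_{k+1}$ for some $m\geq 1$. Then $g^m \in G_k$, and the inductive hypothesis applied to the class of $g$ in $G/G_k$ gives $g \in G_k$. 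Now pass to the quotient $G_k/G_{k+1}$: the image $\bar g$ satisfies $\bar g^{\,m}=1$, and since $G_k/G_{k+1}$ is free-abelian (hence torsion-free) by hypothesis, $\bar g = 1$, i.e.\ $g \in G_{k+1}$. This closes the induction.

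For (3), I would use (2) together with the separating hypothesis: given any $g \neq 1$ in $G$, since $\bigcap_k G_k = \{1\}$ there exists some $k$ with $g \notin G_k$, and then the image of $g$ in the torsion-free nilpotent quotient $G/G_k$ is nontrivial. Hence $G$ is residually torsion-free nilpotent.

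The only non-routine step is the induction in (2): the point is that torsion-freeness is not preserved by arbitrary extensions, so one genuinely needs the two-stage argument (first lift to $G_k$ using the inductive hypothesis, then use torsion-freeness of the abelian quotient) to exploit both pieces of data simultaneously. Everything else is a direct unpacking of the central filtration axioms.
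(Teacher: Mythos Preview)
Your proof is correct and is the standard argument. Note, however, that the paper does not actually supply its own proof of this proposition: at the start of Section~\ref{s:Lie} it explicitly declares that the results of that section are classical and will be stated without proof, referring the reader to Bourbaki. So there is no paper proof to compare against. Your treatment --- normality from (F3), nilpotency via the induced central series on $G/G_k$, torsion-freeness by the two-stage induction (first pull $g$ into $G_k$ using the inductive hypothesis, then kill it using torsion-freeness of the free-abelian layer $G_k/G_{k+1}$), and residual torsion-free nilpotence from the separating hypothesis --- is exactly what one would expect and would be acceptable as a written-out proof.
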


 Each central filtration $\mathcal{G}$ also gives rise to a Lie algebra $L_\mathcal{G}$ over $\mathbb{Z}$. To describe this, we first have to take a short detour to look at some commutator identities. We use the convention that for $x,y \in G$ we have $[x,y]=xyx^{-1}y^{-1}$, and for conjugation we write $^yx=yxy^{-1}.$ 

\begin{lemma}\label{l:commutator relations}
Let $x,y,z$ be elements of $G$. Then the following identities hold:
\begin{align}
\label{e:r1} \tensor*[^x]{y}{}&=[x,y].y \\
\label{e:r2} [xy,z]&= \tensor*[^x]{[y,z]}{}.[x,z]=[x,[y,z]].[y,z].[x,z], \\
\label{e:r3} [x,yz]&=[x,y].\tensor*[^y]{[x,z]}{}=[x,y].[y,[x,z]].[x,z], \end{align}

As well as the Witt--Hall identity: \begin{equation*}[[x,y],\tensor*[^y]{z}{}].[[y,z],\tensor*[^z]{x}{}].[[z,x],\tensor[^x]{y}{}]=1.\end{equation*} \end{lemma}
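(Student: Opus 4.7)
The plan is to verify each identity by direct computation from the definitions $[x,y]=xyx^{-1}y^{-1}$ and ${}^xy=xyx^{-1}$, essentially the same proof as in standard references.

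First I would dispatch \eqref{e:r1} instantly: both sides expand to $xyx^{-1}$, since $[x,y].y = xyx^{-1}y^{-1}y = xyx^{-1}$. This identity is the key bridge between conjugation and commutators, and I will use it repeatedly.

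Next, for \eqref{e:r2}, I would expand the left hand side as $[xy,z]=xyz y^{-1}x^{-1}z^{-1}$, and the middle expression as
\[ {}^x[y,z].[x,z] = x(yzy^{-1}z^{-1})x^{-1}\cdot xzx^{-1}z^{-1} = xyzy^{-1}z^{-1}\cdot zx^{-1}z^{-1} = xyzy^{-1}x^{-1}z^{-1}. \]
So the first equality holds. The second equality in \eqref{e:r2} is then immediate from \eqref{e:r1} applied with $y$ replaced by the commutator $[y,z]$: namely ${}^x[y,z] = [x,[y,z]].[y,z]$. Identity \eqref{e:r3} is proved in exactly the same symmetric way, expanding $[x,yz]$ directly and then using \eqref{e:r1} on ${}^y[x,z]$. (Alternatively one can deduce \eqref{e:r3} from \eqref{e:r2} using $[x,y]^{-1}=[y,x]$, but the direct expansion is cleaner.)

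The substantive step is the Witt--Hall identity. I would prove this by brute expansion. Writing $a=[[x,y],{}^yz]$, $b=[[y,z],{}^zx]$, $c=[[z,x],{}^xy]$, each factor is of the form $[[u,v],{}^vw] = [u,v] \cdot vwv^{-1} \cdot [u,v]^{-1} \cdot vw^{-1}v^{-1}$. Expanding $[u,v] = uvu^{-1}v^{-1}$ turns each factor into a product of eight letters in $\{x^{\pm 1},y^{\pm 1},z^{\pm 1}\}$. Concatenating $a.b.c$ gives a word of length twenty-four; the cyclic symmetry $(x,y,z)\mapsto(y,z,x)$ permutes the three factors cyclically, so I only need to track one factor carefully and then read off the other two by relabelling. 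Free reduction in the ambient free group on $\{x,y,z\}$ collapses the product to the empty word, and since the identity in the free group specialises to an identity in any group, we are done. This computation is tedious but purely mechanical, and is the only step where care is needed; the main obstacle is simply bookkeeping, not any conceptual difficulty.
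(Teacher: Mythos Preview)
Your approach is correct and is exactly the standard one: verify \eqref{e:r1}--\eqref{e:r3} by expanding both sides from the definitions, and then check the Witt--Hall identity by writing out each of the three factors as a reduced word in the free group on $\{x,y,z\}$ and observing that the concatenation freely reduces to the empty word. Your expansions of \eqref{e:r1} and \eqref{e:r2} are accurate, and \eqref{e:r3} follows in the same manner.

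One small slip worth flagging: after the obvious internal cancellations, each factor $[[u,v],{}^vw]$ reduces to a word of length \emph{ten}, not eight, namely
\[
uvu^{-1}\,w\,uv^{-1}u^{-1}\,v\,w^{-1}\,v^{-1}.
\]
The cyclic symmetry then gives the three factors, and the product of thirty letters does indeed collapse to the identity under free reduction (the tail of each factor cancels against the head of the next, and the remaining pieces telescope). This does not affect the validity of your argument, only the bookkeeping estimate.

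As for comparison with the paper: the paper does not actually prove this lemma. It is stated in Section~\ref{s:Lie} as a classical result and explicitly left without proof, with a reference to Bourbaki. So your direct verification is not competing with an alternative argument in the paper; it simply supplies what the paper deliberately omits, and it does so in the expected way.
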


The reader should be aware that the above equations are different to those that occur in many group theory text books; the commutation and conjugation conventions we use are set up for left, rather than right, actions. The Witt--Hall identity implies the following `3 subgroup' theorem:

\begin{theorem}[Hall, 1933] \label{t:hall} Let $X,Y$ and $Z$ be three normal subgroups of $G$. Then $$[[X,Y],Z] \subset [[Y,Z],X].[[Z,X],Y] $$ \end{theorem}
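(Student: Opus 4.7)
The plan is to pass to the quotient by $N := [[Y,Z],X] \cdot [[Z,X],Y]$ and use the Witt--Hall identity to collapse the third factor. First I would observe that since $X$, $Y$, $Z$ are normal in $G$, so are the commutator subgroups $[Y,Z]$, $[Z,X]$ (normality follows from the identity $g[a,b]g^{-1}=[gag^{-1},gbg^{-1}]$), and iterating gives that $[[Y,Z],X]$ and $[[Z,X],Y]$ are normal subgroups of $G$. Therefore $N$ is a normal subgroup of $G$, and we may form the quotient $\pi\colon G\to G/N$.

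Next, for arbitrary $x\in X$, $y\in Y$, $z\in Z$ apply the Witt--Hall identity
\[ [[x,y],{}^y z]\cdot[[y,z],{}^z x]\cdot [[z,x],{}^x y]=1. \]
Because $X$, $Y$, $Z$ are normal, the elements ${}^z x$, ${}^x y$, ${}^y z$ lie in $X$, $Y$, $Z$ respectively, so the second factor lies in $[[Y,Z],X]\subset N$ and the third lies in $[[Z,X],Y]\subset N$. Hence the first factor lies in $N$ as well, giving $[[x,y],{}^y z]\in N$. Letting $y=1$ (or equivalently noting that ${}^y z$ ranges over all of $Z$ as $y$ ranges over $Y$ and $z$ over $Z$, since it already does so with $y=1$) we conclude that $[[x,y],z']\in N$ for every $x\in X$, $y\in Y$, $z'\in Z$.

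Finally I would upgrade this from the generating commutators $[x,y]$ of $[X,Y]$ to all of $[X,Y]$. The subgroup $[X,Y]$ is generated by the commutators $\{[x,y]: x\in X, y\in Y\}$ (again using normality to absorb conjugates), so a general element $w\in[X,Y]$ is a product of such commutators and their inverses. A straightforward induction using identity \eqref{e:r2}, $[ab,c]= {}^a[b,c]\cdot [a,c]$, together with the normality of $N$ in $G$, shows that $[w,z']\in N$ for every $w\in[X,Y]$ and $z'\in Z$. This gives $[[X,Y],Z]\subset N$, as required.

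The only real subtlety is the last step: the Witt--Hall identity only produces commutators with single-letter entries in the first slot, and one needs to know that $N$ is normal (not merely a subgroup) in $G$ in order to propagate membership through the expansion formula \eqref{e:r2}. The normality of the three double-commutator subgroups, which is automatic from the normality of $X$, $Y$, $Z$, is what makes the argument go through cleanly.
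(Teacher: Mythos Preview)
The paper does not actually prove Theorem~\ref{t:hall}; Section~\ref{s:Lie} explicitly states that the results there are classical and will be stated without proof. Your argument is the standard proof and is correct: pass to $G/N$ with $N=[[Y,Z],X]\cdot[[Z,X],Y]$ normal, use the Witt--Hall identity to get $[[x,y],{}^yz]\in N$, observe that ${}^yz$ sweeps out all of $Z$ for each fixed $y$, and then propagate from generators $[x,y]$ to all of $[X,Y]$ using \eqref{e:r2} and normality of $N$.

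One phrasing issue: the sentence ``Letting $y=1$'' is misleading, since setting $y=1$ collapses $[x,y]$ to the identity and says nothing. What you need (and what your parenthetical actually supplies) is that for \emph{each fixed} $y\in Y$ the map $z\mapsto{}^yz$ is a bijection of $Z$ onto itself, so given arbitrary $x,y,z'$ one takes $z=y^{-1}z'y\in Z$ to obtain $[[x,y],z']\in N$. I would delete ``Letting $y=1$'' and keep only the surjectivity observation. Also, for completeness the inductive step should mention the inverse case $[a^{-1},z']={}^{a^{-1}}\bigl([a,z']^{-1}\bigr)\in N$, though this is routine.
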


Now let $\mathcal{G}=\{G_i\}_{i\geq1}$ be any central filtration of $G$. Let $L_{\mathcal{G},i}=G_i/G_{i+1}$. As $[G_i,G_i]\subset G_{2i}\subset G_{i+1}$ each $L_{\mathcal{G},i}$ is an abelian group, and we can form a $\mathbb{Z}$--module $L_{\mathcal{G}}=\oplus_{i=1}^\infty L_{\mathcal{G},i}.$ Any element in $L_\mathcal{G}$ is of the form $\sum_i x_i G_{i+1}$, where each $x_i \in G_i$ and only finitely many $x_i$ are not equal to the identity. 

The Witt--Hall identity can be seen as a group theoretic version of the Jacobi identity in a Lie algebra. In fact, one can use the above set of commutator relations to show the following:

\begin{proposition} The bracket operation $$[\sum_i x_i G_{i+1},\sum_j y_j G_{j+1}]=\sum_{i,j
}[x_i,y_j] G_{i+j+1}$$ gives $L_\mathcal{G}$ the structure of a graded Lie $\mathbb{Z}$--algebra. \end{proposition}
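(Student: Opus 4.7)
The plan is to verify the four axioms of a Lie algebra---well-definedness, $\mathbb{Z}$-bilinearity, the alternating property, and the Jacobi identity---together with the grading condition $[L_{\mathcal{G}, i}, L_{\mathcal{G}, j}] \subset L_{\mathcal{G}, i+j}$. By the proposed formula it suffices to work with homogeneous elements $xG_{i+1}$ and $yG_{j+1}$. Gradedness is immediate from axiom (F3): $[x,y] \in [G_i, G_j] \subset G_{i+j}$, so $[x,y] G_{i+j+1}$ lies in $L_{\mathcal{G}, i+j}$.

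For well-definedness, I replace $x$ by $xz$ with $z \in G_{i+1}$ and use \eqref{e:r2} to expand
$$[xz, y] = [x, [z, y]] \cdot [z, y] \cdot [x, y].$$
Property (F3) gives $[z, y] \in G_{i+j+1}$ and $[x, [z, y]] \in G_{2i+j+1} \subset G_{i+j+1}$, so $[xz, y] \equiv [x, y] \pmod{G_{i+j+1}}$. The analogous argument in the second slot uses \eqref{e:r3}. The same identity handles $\mathbb{Z}$-bilinearity: for $x, x' \in G_i$, \eqref{e:r2} applied to $[xx', y]$ produces a nested commutator $[x, [x', y]] \in [G_i, G_{i+j}] \subset G_{2i+j} \subset G_{i+j+1}$, so modulo $G_{i+j+1}$ we have $[xx', y] \equiv [x', y][x, y]$, which is additivity in the abelian quotient $L_{\mathcal{G}, i+j}$. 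The alternating property on homogeneous generators reduces to the group-theoretic identity $[x, y][y, x] = 1$; combined with bilinearity this gives $[a, a] = 0$ for arbitrary $a \in L_\mathcal{G}$.

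The main obstacle is the Jacobi identity, where one must carefully convert the Witt--Hall identity of Lemma \ref{l:commutator relations} into the desired form. Take $x \in G_i$, $y \in G_j$, $z \in G_k$ and consider the first factor $[[x, y], \tensor*[^y]{z}{}]$. Identity \eqref{e:r1} rewrites $\tensor*[^y]{z}{} = [y, z] \cdot z$, and expanding $[[x, y], [y, z] \cdot z]$ via \eqref{e:r3} produces $[[x, y], z]$ along with two correction terms, namely $[[x, y], [y, z]]$ and $[[y, z], [[x, y], z]]$. Property (F3) places these in $G_{i+2j+k}$ and $G_{i+2j+2k}$ respectively, both contained in $G_{i+j+k+1}$ since $j, k \geq 1$. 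Hence $[[x, y], \tensor*[^y]{z}{}] \equiv [[x, y], z] \pmod{G_{i+j+k+1}}$, and cyclically symmetric arguments handle the remaining two factors of the Witt--Hall identity. Reducing modulo $G_{i+j+k+1}$ then yields
$$[[x, y], z] \cdot [[y, z], x] \cdot [[z, x], y] \equiv 1,$$
which translates in the abelian quotient $L_{\mathcal{G}, i+j+k}$ to the Jacobi identity for the bracket. This establishes the graded Lie-algebra structure.
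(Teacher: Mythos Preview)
Your proof is correct. The paper does not actually prove this proposition: at the start of Section~\ref{s:Lie} the author declares that ``the results in this section are classical \ldots\ and we will state them without proof,'' and immediately before the proposition only remarks that ``one can use the above set of commutator relations to show the following.'' Your argument carries out precisely that suggestion---using \eqref{e:r2} and \eqref{e:r3} for well-definedness and bilinearity, and reducing the Witt--Hall identity modulo $G_{i+j+k+1}$ via \eqref{e:r1} and \eqref{e:r3} to obtain the Jacobi identity---so it matches the paper's intended (but omitted) route.
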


The identities \eqref{e:r2} and \eqref{e:r3} imply that if $G$ has a generating set $\{x_1,\ldots,x_n\}$ then any consecutive quotient $\g_k(G)/\g_{k+1}(G)$ of terms in the lower central series is generated by elements of the form $[x_{i_1},[x_{i_2},[\cdots[x_{i_{k-1}},x_{i_k}]\cdots]]].\g_{k+1}(G)$. In particular:

\begin{proposition} \label{p:gen}
If $G$ is generated by $\{x_1,\ldots,x_n\}$ then $L_{\g(G)}$ is generated by the set $\{x_1\g_1(G),\ldots,x_n\g_1(G)\}$.
\end{proposition}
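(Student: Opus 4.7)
The plan is to prove, by induction on $k$, that the graded piece $L_{\gamma(G),k} = \gamma_k(G)/\gamma_{k+1}(G)$ is generated as an abelian group by the classes modulo $\gamma_{k+1}(G)$ of left-normed iterated commutators $[x_{i_1}, [x_{i_2}, [\cdots[x_{i_{k-1}}, x_{i_k}]\cdots]]]$ in the generators $x_j$. Once this is established, the bracket formula $[x G_{i+1}, y G_{j+1}] = [x,y] G_{i+j+1}$ on $L_{\gamma(G)}$ shows that each such class lies in the Lie subalgebra generated by the images of the $x_j$ in $L_{\gamma(G),1}$, and the proposition follows.

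The base case $k = 1$ is immediate: $G/\gamma_2(G)$ is the abelianisation of $G$, so it is generated by the classes of $x_1,\ldots,x_n$. For the inductive step, I will use the recursive definition $\gamma_{k+1}(G) = [G, \gamma_k(G)]$, which tells us $\gamma_{k+1}(G)$ is generated as a group by elements $[g, c]$ with $g \in G$ and $c \in \gamma_k(G)$. The goal is then to rewrite each such $[g,c]$ modulo $\gamma_{k+2}(G)$ as a product of left-normed iterated commutators of length $k+1$ in the generators.

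To reduce the first slot $g$ to a single generator, I write $g$ as a product of $x_j^{\pm 1}$ and apply \eqref{e:r2}: $[y_1 y_2, c] = [y_1, [y_2, c]] \cdot [y_2, c] \cdot [y_1, c]$. Since $c \in \gamma_k(G)$ gives $[y_2, c] \in \gamma_{k+1}(G)$, the term $[y_1, [y_2, c]]$ lies in $\gamma_{k+2}(G)$, so $[y_1 y_2, c] \equiv [y_2, c] \cdot [y_1, c] \pmod{\gamma_{k+2}(G)}$. Inverses are handled via $1 = [x x^{-1}, c] \equiv [x^{-1}, c] \cdot [x, c]$, giving $[x^{-1}, c] \equiv [x, c]^{-1}$ in the abelian quotient $\gamma_{k+1}(G)/\gamma_{k+2}(G)$. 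To reduce the second slot $c$, the inductive hypothesis lets me write $c = c' d$ where $c'$ is a product of left-normed iterated commutators of length $k$ and $d \in \gamma_{k+1}(G)$; identity \eqref{e:r3} then yields $[x_i, c' d] \equiv [x_i, c'] \pmod{\gamma_{k+2}(G)}$ because $[x_i, d] \in \gamma_{k+2}(G)$, and one further application of \eqref{e:r3} expands $[x_i, c']$ through the product for $c'$, producing the required combination of left-normed commutators of length $k+1$.

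The main obstacle will be the careful bookkeeping of these modular reductions. Every use of \eqref{e:r2} or \eqref{e:r3} produces a ``correction'' nested commutator of the form $[\cdot, [\cdot, \cdot]]$, and at each step one must verify that the depths involved force the correction into $\gamma_{k+2}(G)$ so that it vanishes in $L_{\gamma(G), k+1}$. Once the induction is in place, the Lie-algebraic conclusion follows immediately from the compatibility between the group commutator and the graded bracket on $L_{\gamma(G)}$.
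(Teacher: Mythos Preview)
Your proposal is correct and follows precisely the approach the paper indicates: the paper states this result without a full proof, offering only the one-sentence remark that identities \eqref{e:r2} and \eqref{e:r3} imply each $\gamma_k(G)/\gamma_{k+1}(G)$ is generated by left-normed iterated commutators in the $x_i$; your argument is exactly the detailed execution of that sketch.
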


We finish this section with a useful observation:

\begin{proposition} \label{p:func}
Let $\mathcal{G}=\{G_i\}$ and $\mathcal{H}=\{H_i\}$ be central filtrations of groups $G$ and $H$ respectively. Let $\phi:G \to H$ be a homomorphism such that $\phi(G_i)\subset \phi(H_i)$ for all $i\in \mathbb{N}$. Then $\phi$ induces a graded Lie algebra homomorphism $\Phi:L_\mathcal{G} \to L_\mathcal{H}$, defined by: $$ \Phi(\sum_i x_i G_{i+1})=\sum_i \phi(x_i)H_{i+1}.$$
\end{proposition}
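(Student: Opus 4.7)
The plan is to build $\Phi$ piece by piece along the grading, verify it is well-defined, and then check it respects the bracket. I take the hypothesis to mean $\phi(G_i)\subset H_i$ for all $i$ (the statement contains an evident typo).

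First I would define $\Phi$ on each graded summand $L_{\mathcal{G},i}=G_i/G_{i+1}$ by the rule $xG_{i+1}\mapsto \phi(x)H_{i+1}$. This is well-defined: if $xG_{i+1}=x'G_{i+1}$, then $x^{-1}x'\in G_{i+1}$, so $\phi(x^{-1}x')\in\phi(G_{i+1})\subset H_{i+1}$, which gives $\phi(x)H_{i+1}=\phi(x')H_{i+1}$. The assignment also lands in $L_{\mathcal{H},i}$ since $\phi(G_i)\subset H_i$, so the grading is preserved. Because $\phi$ is a group homomorphism and $L_{\mathcal{G},i}$, $L_{\mathcal{H},i}$ are abelian, the restriction $\Phi|_{L_{\mathcal{G},i}}$ is a homomorphism of abelian groups; extending by direct sum yields a $\mathbb{Z}$-module map $\Phi:L_\mathcal{G}\to L_\mathcal{H}$, automatically respecting the grading.

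Next I would verify that $\Phi$ is a Lie algebra homomorphism. By $\mathbb{Z}$-linearity it suffices to check $\Phi([u,v])=[\Phi(u),\Phi(v)]$ on homogeneous generators $u=xG_{i+1}\in L_{\mathcal{G},i}$ and $v=yG_{j+1}\in L_{\mathcal{G},j}$. Using the bracket formula from the preceding proposition, the left-hand side equals $\phi([x,y])H_{i+j+1}$, while the right-hand side equals $[\phi(x),\phi(y)]H_{i+j+1}$. These agree because $\phi$ is a group homomorphism, so $\phi([x,y])=[\phi(x),\phi(y)]$.

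There is no real obstacle; the statement is essentially the functoriality of the quotient construction, and the only thing that could conceivably go wrong, the independence of coset representatives, is exactly what the hypothesis $\phi(G_i)\subset H_i$ (applied at level $i+1$) guarantees. The compatibility with the bracket is then automatic because the Lie bracket on $L_\mathcal{G}$ and $L_\mathcal{H}$ is defined by group commutators, which $\phi$ transports by virtue of being a group homomorphism.
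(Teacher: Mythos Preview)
Your argument is correct and is precisely the standard verification one would give; you also correctly identify the typo in the hypothesis. Note, however, that the paper does not actually supply a proof here: Section~\ref{s:Lie} explicitly states that its results are classical and are ``state[d] without proof,'' so there is nothing to compare against beyond observing that your write-up fills in the routine details the author chose to omit.
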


\section{The cast} \label{s:cast}

In this section we introduce a host of partially commutative structures associated with a finite graph $\G$. As in the introduction, we will assume that $\G$ has its vertices labelled $v_1,\ldots, v_n$. This is also our preferred generating set of the right-angled Artin group $A_\G$, so that $[v_i,v_j]=1$ in $A_\G$ if there is an edge between the vertices $v_i$ and $v_j$ in $\G$.

\subsection{The monoid $M$ and algebra $U$} \label{s:U}

Let $W(V)$ be the set of positive words in $\{v_1,\ldots,v_n\}$. The empty word is denoted by $\emptyset$ or $1$. We write $|w|$ to denote the length of a word in $W(V)$. We define $\|w\|$, the \emph{multidegree} of a word $w=v_{p_1}^{e_1}\cdots v_{p_k}^{e_k}$ to be the element of $\mathbb{N}^r$ with $i$th coordinate given by $$\sum_{p_j=i}e_j.$$ If $w,w' \in W(V)$, we write $w\leftrightarrow w'$ if there exist $w_1,w_2 \in W(V)$ and vertices  $v_i,v_j$ that are connected by an edge in $\G$ so that: \begin{align*} w&=w_1v_iv_jw_2, \\ w'&=w_1v_jv_iw_2. \end{align*} We then define an equivalence relation on $W(V)$ by saying that $w \sim w'$ if there exist $w_1,\ldots,w_n \in W(V)$ such that $$w=w_1 \leftrightarrow w_2 \leftrightarrow \cdots \leftrightarrow w_n= w'.$$
Let $M=W(V)/\sim$. Let $\overline{w}$ be the equivalence class of $w$ under the equivalence relation $\sim.$ If $w_1\sim w_1'$ and $w_2 \sim w_2'$ then $w_1w_2\sim w_1'w_2'$, therefore multiplication of words in $W(V)$ descends to a multiplication operation on $M$, with an identity element given by the equivalence class of the empty word. Similarly, if $w \sim w'$ then $|w|=|w'|$ and $\|w\|=\|w'\|$, so we may define the length and multidegree of an element $m\in M$ to be the respective length and multidegree of a word in $W(V)$ representing $m$. Length and multidegree are additive with respect to multiplication, so that if $m_1,m_2 \in M$ we have: \begin{align*}|m_1.m_2|&=|m_1|+|m_2| \\ \|m_1.m_2\|&=\|m_1\|+\|m_2\| \end{align*}


This gives the free $\mathbb{Z}$--module on $M$ a graded algebra structure in the following way:

\begin{proposition} Let $U$ be the free $\Z$--module with a basis given by elements of $M.$ Let $U_{i}$ be the submodule of $U$ spanned by the elements of $M$ of length $i$. Then $U=\oplus_{i=0}^{\infty}U_{i}$ and multiplication in $M$ gives $U$ the structure of a graded associative $\mathbb{Z}$--algebra. \end{proposition}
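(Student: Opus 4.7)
The plan is to assemble the algebra structure on $U$ in three stages: first extend the monoid multiplication bilinearly, second check associativity and the unit, and third verify compatibility with the length grading. Almost every step reduces to a property of $M$ that was established just before the statement.

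First, since $U$ is by definition the free $\mathbb{Z}$--module on the set $M$, the universal property of free modules means that any function $M \times M \to U$ extends uniquely to a $\mathbb{Z}$--bilinear map $U \times U \to U$. Applying this to the multiplication map $M \times M \to M \hookrightarrow U$ gives a bilinear product on $U$: explicitly, if $u = \sum_m a_m m$ and $u' = \sum_m b_m m$ are finite sums, set $u \cdot u' = \sum_{m,m'} a_m b_{m'} (m \cdot m')$, where $m \cdot m'$ is the product in $M$.

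Next I would verify that $U$ is an associative unital $\mathbb{Z}$--algebra. Associativity follows directly from associativity of multiplication in $M$ (which in turn comes from the observation that the concatenation product on $W(V)$ descends through $\sim$, noted just before the statement): the computation $(u \cdot u') \cdot u'' = u \cdot (u' \cdot u'')$ reduces, by $\mathbb{Z}$--bilinearity, to the identity $(m \cdot m') \cdot m'' = m \cdot (m' \cdot m'')$ in $M$. The identity element of $U$ is the basis element corresponding to the class $\overline{\emptyset} \in M$ of the empty word, since this class is the identity of $M$.

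Finally, for the grading, let $M_i \subset M$ be the set of elements of length $i$, so that $U_i$ is the free $\mathbb{Z}$--module on $M_i$. Because length is well-defined on $M$ and every $m \in M$ has a unique length, we have $M = \bigsqcup_{i=0}^\infty M_i$ as sets, and hence $U = \bigoplus_{i=0}^\infty U_i$ as $\mathbb{Z}$--modules. To see that this is a grading of the algebra, I would use the additivity $|m_1 \cdot m_2| = |m_1| + |m_2|$ recorded above: it implies $M_i \cdot M_j \subset M_{i+j}$, and then bilinearity gives $U_i \cdot U_j \subset U_{i+j}$, which is exactly the compatibility required of a graded algebra.

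There is no real obstacle here; the content of the proposition has already been done by checking that $\sim$ respects concatenation and that length is additive on $M$. The ``proof'' is essentially bookkeeping: transport those two facts along the free $\mathbb{Z}$--module functor $M \mapsto U$.
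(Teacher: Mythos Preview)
Your proposal is correct, and it is essentially the argument the paper has in mind: the proposition is stated there without proof, immediately after the observations that multiplication and length descend from $W(V)$ to $M$ and that length is additive, so the content is exactly the bookkeeping you describe. There is nothing to add.
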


We will distinguish elements of $U$ from $A_\G$ by writing positive words in $\{\mathbf{v_1,\ldots,v_n}\}$ rather than $\{v_1,\ldots,v_n\}$.

\subsection{$U^\infty$, an ideal $X$, and the group of units $U^*$} \label{s:U*}

Let $U^\infty$ be the algebra extending $U$ by allowing infinitely many coefficients
of a sequence of positive elements to be non-zero. Any element of $U^\infty$ can be written uniquely as a power series $a=\sum_{i=0}^{\infty}a_i,$ where $a_i$ is an element of $U_i.$ We say that $a_i$ is the \emph{homogeneous part} of $a$ of degree $i$, and $a_0$ is the \emph{constant term} of $a$. Each $a_i$ is a linear sum of elements of $M_i=\{m\in M : |m|=i\}$, so is of the form $a_i=\sum_{m\in M_i}\lambda_m m$, where $\lambda_m \in\mathbb{Z}$. If $a=\sum_{i=0}^\infty a_i$ and $b=\sum_{i=0}^\infty b_i$ then the homogeneous part of $a.b$ of degree $i$ is $$c_i=\sum_{j=0}^i a_jb_{i-j}.$$

If $a^{(0)},a^{(1)},a^{(2)},\ldots$ is a sequence of elements of $U^\infty$, then the sum $\sum_{j=0}^\infty a^{(j)}$ does not always make sense. However, if the set $$S_i=\{j:a^{(j)}_i \neq 0\}$$ is finite for all $i$ we define $\sum_{j=0}^\infty a^{(j)}$ to be the element of $U^\infty$ with homogeneous part of degree $i$ equal to $$\sum_{j \in S_i} a^{(j)}_i.$$

Let $X$ be the ideal of $U^\infty$ generated by $\mathbf{v_1,\ldots,v_n}$. Alternatively, $X$ is the set of elements of $U^\infty$ with a trivial constant term. In a similar fashion, $X^k$ is the ideal of $U^\infty$ containing all elements $a \in U^\infty$ such that $a_i=0$ for all $i<k.$

Let $U^*$ be the group of units of $U^\infty$. One can show (cf. Proposition \ref{p:inverses}) that $a \in U^*$ if and only if $a= \pm1 +x$ for some $x \in X$. Note that this is much larger than the group of units of $U$: there is an embedding of $A_\G$ into $U^*$ called the \emph{Magnus morphism}, or \emph{Magnus map} (Proposition \ref{prop:mag}).

\section{The Magnus map} \label{s:Magnus map}

To make $U^\infty$ easier to work with, we would like to treat it as a (noncommutative) polynomial algebra. Specifically, we would like to have an idea of `substitution' of elements of $U^\infty$ `into other elements of $U^\infty$'. For instance, given a positive word $w=v_{p_1}\ldots v_{p_k}$ in $W(V)$ and $Q_1,\ldots,Q_n$ in $U^\infty$ we may define $w(Q_1,\ldots,Q_n)=Q_{p_1}\cdots Q_{p_k} \in U^\infty$.  Suppose that $Q_1,\ldots,Q_n$ satisfy \begin{equation}\label{e:substitution}Q_iQ_j=Q_jQ_i \text{ for all $i,j$ such that $v_i$ and $v_j$ span an edge in $\G$.}\end{equation} If $w$ and $w'$ are words such that $w \leftrightarrow w'$ then $$w(Q_1,\ldots,Q_n)=w'(Q_1,\ldots,Q_n).$$ It follows that if $w$ and $w'$ represent the same element of $M$, the above equality also holds. Therefore we may define $m(Q_1,\ldots,Q_n)=w(Q_1,\ldots,Q_n),$ where $w$ is any word in the equivalence class $m$. This definition respects multiplication in $M$, so that for $m_1,m_2 \in M$ we have: \begin{equation}\label{e:mult}m_1(Q_1,\ldots,Q_n)m_2(Q_1,\ldots,Q_n)
=m_1m_2(Q_1,\ldots,Q_n). \end{equation}

We can't quite substitute variables in any element of $U^\infty$ with this level of generality; for example it is not possible to set $x=1$ in $$1 + x  + x^2 + x^3 + \cdots.$$ However, as long as $Q_1,\ldots,Q_n$ have a trivial constant part (in other words they all lie in the ideal $X$) this problem does not occur.

\begin{proposition}\label{p:substitution}
Let $Q_1,\ldots,Q_n$ be elements of $X$ which satisfy condition \eqref{e:substitution}. Then the mapping $$\mathbf{v_i} \mapsto Q_i$$ may be extended to an algebra morphism $\phi:U^\infty \to U^\infty$.
\end{proposition}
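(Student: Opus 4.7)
The plan is to build $\phi$ in stages: first define it on the monoid $M$, extend $\mathbb{Z}$-linearly to $U$, and finally extend to $U^\infty$ by a convergence argument based on degree. The groundwork for the first two stages has already been laid by the discussion preceding the proposition. For each $m \in M$ the element $m(Q_1,\ldots,Q_n) \in U^\infty$ is independent of the choice of word representative of $m$ by the commutation hypothesis \eqref{e:substitution}, and the assignment $m \mapsto m(Q_1,\ldots,Q_n)$ respects monoid multiplication by equation \eqref{e:mult}. Extending $\mathbb{Z}$-linearly yields a map $\phi_0 : U \to U^\infty$, and the ring homomorphism property for $\phi_0$ follows from \eqref{e:mult} together with distributivity.

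The key to passing from $U$ to $U^\infty$ is the degree bound $\phi_0(U_k) \subset X^k$. Indeed, if $m \in M$ has $|m| = k$, then $m(Q_1,\ldots,Q_n)$ is a product of $k$ elements of $X$, and $X^k$ is exactly the ideal of such products. Given $a = \sum_{i=0}^\infty a_i \in U^\infty$ I would define
$$ \phi(a) = \sum_{i=0}^\infty \phi_0(a_i). $$
Because the degree-$d$ component of $\phi_0(a_i)$ vanishes whenever $i > d$, at most finitely many summands contribute in any prescribed degree; this is precisely the condition from Section \ref{s:U*} that gives the infinite sum meaning in $U^\infty$.

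The remaining task is to upgrade $\phi$ from a $\mathbb{Z}$-module map to a ring homomorphism on $U^\infty$. Additivity is immediate from the definition. For multiplicativity, I would compare the degree-$d$ homogeneous components of $\phi(ab)$ and of $\phi(a)\phi(b)$: using $(ab)_k = \sum_{i+j=k} a_ib_j$ and the fact that $\phi_0$ is already multiplicative on $U$, each side reduces (via the bound $\phi_0(a_i)\phi_0(b_j) \in X^{i+j}$, which limits the indices that can contribute in degree $d$ to a finite set) to the same finite sum indexed by pairs $(i,j)$ with $i+j \leq d$. No single step is difficult; the only real obstacle is bookkeeping around the infinite sums, and this is entirely tamed by the single inclusion $\phi_0(U_k) \subset X^k$.
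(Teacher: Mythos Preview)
Your proposal is correct and follows essentially the same approach as the paper: define $\phi$ on $M$ via substitution, extend linearly, use the inclusion $\phi_0(U_k)\subset X^k$ to make the infinite sum well-defined, and invoke \eqref{e:mult} for multiplicativity. If anything, your treatment of multiplicativity on $U^\infty$ (reducing to a finite sum in each degree) is more careful than the paper's, which simply asserts it.
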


\begin{proof} Let $a=\sum_{i=0}^\infty a_i$, with $a_i=\sum_{m \in M_i}\lambda_mm$.  We define: $$\phi(a_i)=\sum_{m \in M_i}\lambda_mm(Q_1,\ldots,Q_n).$$ If $|m|=i$ then as $Q_j \in X$ for all $j$, it follows that $m(Q_1,\ldots,Q_n)$ lies in $X^i$. Therefore the smallest nonzero homogeneous part of $\phi(a_i)$ is of degree at least $i$. Hence the sum $\phi(a)=\sum_{i=1}^\infty \phi(a_i)$ is well defined. It is clear from the definition that $\phi$ is well-behaved under addition and scalar multiplication. Equation \eqref{e:mult} tells us that $\phi$ also behaves well under multiplication, and is an algebra homomorphism. \end{proof}

Such substitutions make our life much easier while working in $U^\infty$; this is exemplified by the following three propositions:  \begin{proposition}\label{p:inverses}If $a$ is of the form $a=1 + \sum_{i=1}^{\infty}a_i,$ then $a \in U^*$ and $$a^{-1}=1-(a_1+a_2+\cdots)+(a_1+a_2+\cdots)^2-\ldots=1 + \sum_{i=1}^{\infty}c_i.$$ Here $c_1=-a_1$ and $c_i=-\sum_{j=0}^{i-1}c_ja_{i-j}=-\sum_{j=1}^{i}a_{j}c_{i-j}$ recursively. \end{proposition}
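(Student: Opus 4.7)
The plan is to write $a = 1 + x$ where $x = \sum_{i\ge 1} a_i$ lies in the ideal $X$, and to verify that the candidate inverse $b = \sum_{k=0}^{\infty}(-x)^k = 1 - x + x^2 - x^3 + \cdots$ is a well-defined element of $U^\infty$ that satisfies $ab = ba = 1$. Since each $a_i$ has degree $i \geq 1$, the element $x$ lies in $X$, and an easy induction shows $x^k \in X^k$, so the lowest-degree homogeneous part of $x^k$ has degree at least $k$. Consequently, for any fixed degree $i$, only the summands with $k \leq i$ contribute to the homogeneous part of degree $i$ of $\sum_k (-x)^k$. The indexing set $S_i = \{k : ((-x)^k)_i \neq 0\}$ is therefore finite, and the sum $b$ makes sense as defined in the construction of $U^\infty$ in Section~\ref{s:U*}.

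Next I would verify $ab = ba = 1$ by a telescoping argument on finite partial sums. For each $N$, the identity
\[
(1+x)\sum_{k=0}^{N}(-x)^k = 1 + (-1)^{N}x^{N+1} = \left(\sum_{k=0}^{N}(-x)^k\right)(1+x)
\]
holds in $U^\infty$ by direct expansion. To extract the degree-$i$ part of $(1+x)b$, I would choose $N \geq i$; then $x^{N+1}$ has no homogeneous part of degree $\leq i$, so only the terms up to $k=N$ in $b$ affect the degree-$i$ piece. This gives that the degree-$i$ part of $(1+x)b$ is $\delta_{i,0}$, and similarly for $b(1+x)$; hence $ab = ba = 1$ and $a \in U^*$.

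Finally, the recursive formulas for the coefficients $c_i$ of $b = 1 + \sum_{i\ge 1} c_i$ will be extracted by equating degree-$i$ homogeneous parts in the identities $ba = 1$ and $ab = 1$. Setting $c_0 = 1$ and $a_0 = 1$, the degree-$i$ part of $ba$ is $\sum_{j=0}^{i} c_j a_{i-j}$, which must vanish for $i \geq 1$; isolating $c_i a_0 = c_i$ gives $c_i = -\sum_{j=0}^{i-1} c_j a_{i-j}$. The same procedure applied to $ab = 1$ isolates $a_0 c_i = c_i$ and yields $c_i = -\sum_{j=1}^{i} a_j c_{i-j}$.

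The main obstacle is really only the bookkeeping around infinite sums in $U^\infty$: one must be careful that both $b$ itself and its products with $a$ are well-defined via the finiteness of the sets $S_i$, and that the telescoping identity, which is a finite algebraic manipulation, transfers correctly to an equality of homogeneous parts in the limit. Once the grading by $X^k \supset X^{k+1}$ is used to bound the contributions of high powers of $x$, everything else is a straightforward verification.
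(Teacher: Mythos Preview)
Your argument is correct. The difference from the paper's proof lies in how the identity $(1+x)\bigl(\sum_{k\ge 0}(-x)^k\bigr)=1$ is established for general $x\in X$. You do it directly: you verify the finite telescoping identity and then pass to the limit degree by degree, using $x^k\in X^k$ to control contributions. The paper instead proves only the single-variable case $a=1+\mathbf{v_i}$ by hand and then transports that identity to an arbitrary $x\in X$ via the substitution homomorphism of Proposition~\ref{p:substitution} (sending every $\mathbf{v_i}$ to $x$). Your route is more self-contained and avoids invoking the substitution machinery; the paper's route is shorter on the page and, more importantly, illustrates the substitution trick that is reused immediately afterwards in Proposition~\ref{p:identities}. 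The derivation of the recursive formulas for $c_i$ by equating homogeneous parts is identical in both approaches.
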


\begin{proof} One first checks that if $a=1+\mathbf{v_i}$ then the element $a^{-1}=1-\mathbf{v_i} + \mathbf{v_i}^2 - \cdots$ satisfies $a.a^{-1}=a^{-1}.a=1$. We then attain the general formula for an element of the form $a=1+x$ with $x\in X$ by applying the algebra homomorphism given by Proposition \ref{p:substitution} under the mapping $\mathbf{v_i} \mapsto x$ for all $i$. The recursive formula is obtained by equating homogeneous parts in the equation $a^{-1}.a=a.a^{-1}=1.$\end{proof}

\begin{proposition} \label{p:identities} Let $x,y \in X$. Then the following formulas hold: \begin{align} (1+x)(1+y)(1+x)^{-1}&=1+y+(xy-yx)\sum_{i=0}^\infty(-1)^ix^i, \\ \label{e:c1}(1+x)(1+y)(1+x)^{-1}(1+y)^{-1}&=1+(xy-yx)\sum_{i,j=0}^\infty(-1)^{i+j}x^iy^j. \end{align} \end{proposition}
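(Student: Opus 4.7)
The plan is to derive both formulas by a direct algebraic manipulation, using Proposition \ref{p:inverses} to expand $(1+x)^{-1}=\sum_{i=0}^\infty (-1)^i x^i$ and the analogous expansion for $(1+y)^{-1}$. The key observation, which handles the first identity essentially in one line, is the rearrangement
\[
1+x+y+xy \;=\; (1+y)(1+x) + (xy-yx),
\]
obtained by noting that $y(1+x)=y+yx$, so $y+xy = y(1+x)+(xy-yx)$, and then combining with $1+x$.

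Given this, I would expand
\[
(1+x)(1+y)(1+x)^{-1} \;=\; (1+x+y+xy)(1+x)^{-1} \;=\; (1+y)(1+x)(1+x)^{-1} + (xy-yx)(1+x)^{-1},
\]
and the first term collapses to $1+y$ while the second term, using Proposition \ref{p:inverses}, becomes
\[
(xy-yx)\sum_{i=0}^\infty (-1)^i x^i,
\]
which is the first claimed formula. One should briefly check that the rearrangement is legitimate in $U^\infty$: the infinite sum is well-defined because the degree $i$ part of $x^i$ is at least $i$, and multiplying on the left by the fixed element $xy-yx$ preserves this, so the sum has only finitely many contributions in each degree.

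For the second identity, I would simply right-multiply the first by $(1+y)^{-1}$. The term $1+y$ collapses against $(1+y)^{-1}$ to $1$, and the remaining term becomes
\[
(xy-yx)\Bigl(\sum_{i=0}^\infty (-1)^i x^i\Bigr)\Bigl(\sum_{j=0}^\infty (-1)^j y^j\Bigr) \;=\; (xy-yx)\sum_{i,j=0}^\infty (-1)^{i+j} x^i y^j,
\]
using distributivity and the fact that the product of two formal series in $U^\infty$ whose lowest degree terms have degree tending to infinity is well-defined as in Section~\ref{s:U*}. I do not anticipate a real obstacle; the only point that deserves explicit mention is the well-definedness of the infinite sums, and the only algebraic input beyond that is the identity $1+x+y+xy=(1+y)(1+x)+(xy-yx)$.
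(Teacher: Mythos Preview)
Your argument is correct and is actually more direct than the paper's. The paper proceeds by first verifying both identities in the special case $x=\mathbf{v_i}$, $y=\mathbf{v_j}$, and then invoking the substitution machinery of Proposition~\ref{p:substitution} to transport the identities to arbitrary $x,y\in X$ via the algebra endomorphism $\mathbf{v_i}\mapsto x$, $\mathbf{v_j}\mapsto y$, $\mathbf{v_k}\mapsto 0$ for $k\neq i,j$. This forces a small case split: if $xy\neq yx$ one must choose $v_i,v_j$ not spanning an edge in $\G$ so that the map satisfies condition~\eqref{e:substitution}, whereas if $xy=yx$ any pair will do. Your route, based on the one-line rearrangement $(1+x)(1+y)=(1+y)(1+x)+(xy-yx)$ followed by right multiplication by $(1+x)^{-1}$ and then $(1+y)^{-1}$, avoids both the substitution proposition and the case distinction entirely, and the well-definedness issues you flag are already absorbed into Proposition~\ref{p:inverses}. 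The paper's approach has the virtue of advertising Proposition~\ref{p:substitution} as a general tool; yours has the virtue of being self-contained and uniform in $x,y$.
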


\begin{proof} As in the proof of Proposition \ref{p:inverses}, we first note that these identities hold for $x=\mathbf{v_i}$ and $y=\mathbf{v_j}$ for any $i$ and $j$. For the general case, we wish to apply Proposition \ref{p:substitution}. If $xy=yx$ then we may pick any pair $i, j$ and study the algebra homomorphism induced by the mappings $\mathbf{v_i} \mapsto x$, $\mathbf{v_j} \mapsto y$, and $\mathbf{v_k} \mapsto 0$ when $k\neq i,j$. If $xy \neq yx$ then $M$ is not commutative so $\G$ is not complete: in this case pick vertices $v_i$ and $v_j$ that do not span an edge in $\G$, and use the same map as above.\end{proof}

\begin{proposition}\label{prop:mag}The mapping $v_i \mapsto 1 + \mathbf{v_i}$ induces a homomorphism $\mu:A_\G \to U^*.$ \end{proposition}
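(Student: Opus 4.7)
The plan is to use the presentation of $A_\G$ together with the universal property of group presentations: it suffices to define $\mu$ first on the free group $F_n$ on $\{v_1,\ldots,v_n\}$ and then verify that each defining relator of $A_\G$ is sent to $1 \in U^*$.

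First I would check that the assignment $v_i \mapsto 1+\mathbf{v_i}$ does land in $U^*$ and hence extends to a well-defined homomorphism $\tilde\mu: F_n \to U^*$. This is immediate from Proposition \ref{p:inverses}: each $1 + \mathbf{v_i}$ has the form $1+x$ with $x \in X$, so it is a unit in $U^\infty$, with explicit inverse $1 - \mathbf{v_i} + \mathbf{v_i}^2 - \cdots$. Since $F_n$ is free on $\{v_1,\ldots,v_n\}$, the universal property of the free group produces the extension $\tilde\mu$.

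Next I would verify the commutator relations. Suppose $v_i$ and $v_j$ are joined by an edge in $\G$. Then in the monoid $M$ we have $\mathbf{v_i}\mathbf{v_j}=\mathbf{v_j}\mathbf{v_i}$ by definition of $\sim$, so the direct computation
\begin{equation*}
(1+\mathbf{v_i})(1+\mathbf{v_j}) = 1+\mathbf{v_i}+\mathbf{v_j}+\mathbf{v_i}\mathbf{v_j} = 1+\mathbf{v_j}+\mathbf{v_i}+\mathbf{v_j}\mathbf{v_i} = (1+\mathbf{v_j})(1+\mathbf{v_i})
\end{equation*}
shows $\tilde\mu([v_i,v_j]) = 1$ in $U^*$. (Alternatively, one can invoke identity \eqref{e:c1} of Proposition \ref{p:identities} with $x=\mathbf{v_i}$, $y=\mathbf{v_j}$: the factor $xy-yx$ vanishes, leaving $1$.)

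Since every defining relator of $A_\G$ lies in the kernel of $\tilde\mu$, the homomorphism descends through the quotient $F_n \twoheadrightarrow A_\G$ to yield the desired $\mu: A_\G \to U^*$ with $\mu(v_i) = 1 + \mathbf{v_i}$. There is no serious obstacle here; the only thing to be careful about is that we need the target to be a group, which is exactly why we work inside the group of units $U^*$ rather than the algebra $U^\infty$, and that the commutativity check takes place in $U^\infty$ (not merely modulo $X^2$ or similar).
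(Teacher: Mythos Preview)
Your proof is correct and follows essentially the same approach as the paper: extend to a homomorphism from the free group $F(V)$ into $U^*$, then check that the defining commutator relators map to $1$ (the paper cites Equation~\eqref{e:c1} for this, which you also mention as an alternative to your direct expansion). The only difference is that you spell out explicitly why $1+\mathbf{v_i}$ is a unit, which the paper leaves implicit.
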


\begin{proof} The mapping $v_i \mapsto 1 +\mathbf{v_i}$ induces a homomorphism $\overline{\mu}:F(V) \to  U^*$ from the free group on the set $V$. If $[v_i,v_j]=1$ in $A_\G$ then $\mathbf{v_i}\mathbf{v_j}-\mathbf{v_j}\mathbf{v_i}=0$ in $U^\infty$, therefore by Equation \eqref{e:c1}, relations in the standard presentation of $A_\G$ are sent to the identity in $U^*$, and $\overline{\mu}$ descends to a homomorphism $\mu:A_\G \to U^*.$\end{proof}

The homomorphism $\mu$ is called the \emph{Magnus map}, and is the central object of study in this section. Its first extension to RAAGs was established by Droms~\cite{Droms}, who used it to show that RAAGs are residually torsion-free nilpotent, and we essentially follow his approach here. Our first task is to gain some understanding of the image of a generic element of $A_\G$ under $\mu$.
\begin{definition}
We say that an element $m \in M$ is \emph{square-free} if for all words $w \in W(V)$ representing $m$ there exists no element $v \in V(\G)$ such that $vv$ occurs as a subword of $w$.
\end{definition}

We will now relate square-free elements of $M$ to reduced words representing elements of $A_\G$. (Note that our words representing elements of $A_\G$ are in ${W(V \cup V^{-1})}$ rather than just $W(V)$).

\begin{definition}
Let $g \in A_\G$ and suppose that $w=v_{p_1}^{e_1}\cdots v_{p_k}^{e_k}$ is a word representing $g$ with $e_i \in \mathbb{Z}$. We say that $w$ is \emph{fully reduced} if  $e_i \neq 0$ for all $i$ and for all $j>i$ such that $v_{p_i}=v_{p_j}$ there exists $i<l<j$ such that $v_{p_i}$ and $v_{p_l}$ do not span an edge in $\G$. \end{definition}

We define three moves on the set of words of the form $w=v_{p_1}^{e_1}\cdots v_{p_k}^{e_k}$:
\begin{align*}
&\text{(M1)} \quad \text{Remove $v_{p_i}^{e_i}$ if $e_i=0$.} \\
&\text{(M2)} \quad \text{Replace the subword $v_{p_i}^{e_i}v_{p_{i+1}}^{e_{i+1}}$ with $v_{p_i}^{e_i+e_{i+1}}$ if $p_i=p_{i+1}$.} \\
&\text{(M3)} \quad \text{Replace the subword $v_{p_i}^{e_i}v_{p_{i+1}}^{e_{i+1}}$ with $v_{p_{i+1}}^{e_{i+1}}v_{p_i}^{e_i}$ if $v_{p_i}$ and $v_{p_{i+1}}$ span an edge in $\G$.}
\end{align*}

Given any word $w$ representing $g$ we may find a fully reduced representative of $g$ by applying a sequence of moves of the form (M1), (M2), and (M3). Moves of type (M3) are called \emph{swaps}. If  $w=v_{p_1}^{e_1}v_{p_2}^{e_2}\cdots v_{p_k}^{e_k}$ is fully reduced then $\mathbf{v_{p_1}v_{p_2}\cdots v_{p_k}}$ is square-free. The following key lemma shows that we can find this square-free form in the $k$th homogeneous part of $\mu(g)$. We will use $\mu(g)_i$ to denote the $i$th homogeneous part of $\mu(g)$.

\begin{lemma} \label{injection lemma}
Let $g$ be a nontrivial element of $A_\G$. There exists $k \in \mathbb{N}$ such that $k$ is the largest integer such that there is a square-free element $m \in M_k$ with nonzero coefficient $\lambda_m$ in the decomposition of $\mu(g)_k$. This element is unique. Furthermore, if $v_{p_1}^{e_1}v_{p_2}^{e_2}\cdots v_{p_l}^{e_l}$ is a fully reduced representative of $g$ then $l=k$, $\mathbf{v_{p_1}\cdots v_{p_l}}=m$, and $e_1\cdots e_l=\lambda_m$.
\end{lemma}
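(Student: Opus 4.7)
The plan is to evaluate $\mu(g)$ directly on any fully reduced representative of $g$ and read off the top-degree square-free part explicitly. Let $w = v_{p_1}^{e_1}v_{p_2}^{e_2}\cdots v_{p_l}^{e_l}$ be a fully reduced representative, which exists by the discussion preceding the lemma and is nonempty since $g\ne 1$. As $\mu$ is a homomorphism,
$$\mu(g) = \prod_{i=1}^l (1+\mathbf{v_{p_i}})^{e_i},$$
and each factor expands as a power series $\sum_{j\ge 0}\alpha_{i,j}\mathbf{v_{p_i}}^{j}$ in the single variable $\mathbf{v_{p_i}}$, with $\alpha_{i,0}=1$ and $\alpha_{i,1}=e_i$. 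For negative $e_i$ the coefficients are obtained by iterating the formula of Proposition~\ref{p:inverses}, but the only features I will use are these first two coefficients and the fact that every higher-order contribution from a single factor is a pure power of $\mathbf{v_{p_i}}$.

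Multiplying out the product, the degree $d$ part of $\mu(g)$ becomes
$$\mu(g)_d \;=\; \sum_{\substack{(j_1,\ldots,j_l)\in\mathbb{Z}_{\ge 0}^l\\ j_1+\cdots+j_l=d}} \Big(\prod_{i=1}^l\alpha_{i,j_i}\Big)\,\mathbf{v_{p_1}}^{j_1}\cdots\mathbf{v_{p_l}}^{j_l}.$$
Here is the combinatorial observation that drives everything: whenever some coordinate satisfies $j_i\ge 2$, the word $v_{p_1}^{j_1}\cdots v_{p_l}^{j_l}$ is itself a representative of the associated element of $M$ and it contains $v_{p_i}v_{p_i}$ as a subword, so that element is by definition not square-free. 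Consequently no tuple with some $j_i\ge 2$ can contribute a square-free monomial, and the tuples with every $j_i\le 1$ satisfy $\sum j_i\le l$. So for $d>l$ the homogeneous part $\mu(g)_d$ contains no square-free monomial, while for $d=l$ the unique admissible tuple $(1,1,\ldots,1)$ contributes the single monomial $\mathbf{v_{p_1}\cdots v_{p_l}}$ with coefficient $e_1e_2\cdots e_l$.

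It remains to confirm that this candidate monomial is genuinely square-free and that its coefficient is nonzero. Nonvanishing of $e_1\cdots e_l$ is immediate since $e_i\ne 0$ for each $i$ by the definition of fully reduced. For square-freeness of $\mathbf{v_{p_1}\cdots v_{p_l}}$, suppose two letters $v_{p_i}=v_{p_j}$ with $i<j$ in $w$ could be brought adjacent by a sequence of swaps on some representative; then the letter $v_{p_{l_0}}$ with $i<l_0<j$ supplied by the fully reduced hypothesis would have to cross one of them, contradicting that $v_{p_{l_0}}$ and $v_{p_i}$ do not span an edge of $\G$. Putting everything together, $k$ exists and equals $l$, the unique square-free monomial realising the maximum is $m=\mathbf{v_{p_1}\cdots v_{p_l}}$, and $\lambda_m=e_1\cdots e_l$. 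The only step requiring any care is the key combinatorial observation that a tuple with some $j_i\ge 2$ gives a non-square-free monomial, and this is handled by exhibiting $v_{p_1}^{j_1}\cdots v_{p_l}^{j_l}$ as an explicit offending representative; everything else is bookkeeping in the expansion of the product.
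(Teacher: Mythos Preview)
Your proof is correct and follows essentially the same approach as the paper's. The paper packages each factor as $\mu(v_{p_i}^{e_i})=1+e_i\mathbf{v_{p_i}}+\mathbf{v_{p_i}^2}u_i$ rather than writing out the full power series, but the key observation---that any monomial in the expansion of degree at least $l$ other than $\mathbf{v_{p_1}\cdots v_{p_l}}$ must contain a square $\mathbf{v_{p_i}^2}$---is identical. Your version is slightly more self-contained in that you supply an argument for the square-freeness of $\mathbf{v_{p_1}\cdots v_{p_l}}$, which the paper asserts without proof just before the lemma.
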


\begin{proof}
By an induction argument on $e_i$\,,  we have $$\mu(v_{p_i}^{e_i})=1+e_i\mathbf{v_{p_i}}+\mathbf{v_{p_i}^2}u_i$$ for some $u_i \in U^*.$ Therefore if $v_{p_1}^{e_1}v_{p_2}^{e_2}\cdots v_{p_k}^{e_k}$ is a fully reduced representative of $g$, we have: \begin{align*}\mu(g)&=\mu(v_{p_1}^{e_1})\mu(v_{p_2}^{e_2})\cdots\mu(v_{p_k}^{e_k}) \\
&=(1+e_1\mathbf{v_{p_1}}+\mathbf{v_{p_1}^2}u_1)(1+e_2\mathbf{v_{p_2}}+\mathbf{v_{p_2}^2}u_2)\cdots(1+e_k\mathbf{v_{p_k}}+\mathbf{v_{p_k}^2}u_k).
\end{align*}

In this expansion we see that any positive element occurring with length greater than $k$ must contain $\mathbf{v_{p_i}^2}$ as a subword for some $i$, and the only element of length $k$ without such a subword is $m=\mathbf{v_{p_1}\cdots v_{p_k}},$ with coefficient $\lambda_m=e_1\cdots e_k.$ As $\mu(g)$ is independent of the choice of fully reduced representative of $g$, every fully reduced representative $v_{q_1}^{f_1}\ldots v_{q_l}^{f_l}$ must satisfy $l=k$, with $\mathbf{v_{q_1}\cdots v_{q_l}}=m$ and $f_1\cdots f_l=\lambda_m$. \end{proof}

We have shown that for every nontrivial $g \in A_\G$ there exists $k > 0$ such that $\mu(g)_k$ is nontrivial.

\begin{corollary}
The homomorphism $\mu:A_\G \to U^*$ is injective. 
\end{corollary}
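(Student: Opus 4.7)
The plan is to derive injectivity directly from Lemma~\ref{injection lemma}, which has already done all the substantive work. Given $g \in A_\G$ with $g \neq 1$, the goal is to exhibit a nontrivial homogeneous part of $\mu(g)$, from which $\mu(g) \neq 1$ in $U^*$ is immediate.

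First I would fix a fully reduced representative of $g$. Starting from any word over $V \cup V^{-1}$ representing $g$, collect consecutive occurrences of a common letter into a single power to reach the shape $v_{p_1}^{e_1} \cdots v_{p_k}^{e_k}$, and then apply the moves (M1), (M2), (M3) repeatedly until none applies; this produces a fully reduced word $w' = v_{q_1}^{f_1} \cdots v_{q_l}^{f_l}$. Each of these moves preserves the element of $A_\G$ represented, so $w'$ still represents $g$. Crucially, because the empty word represents the identity of $A_\G$ and $g \neq 1$, the word $w'$ must be nonempty, that is, $l \geq 1$.

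Applying Lemma~\ref{injection lemma} to $g$ with this fully reduced representative, the homogeneous part $\mu(g)_l$ contains the square-free monomial $\mathbf{v_{q_1}} \cdots \mathbf{v_{q_l}}$ with coefficient $f_1 \cdots f_l$, and each $f_i$ is nonzero because the representative is fully reduced. Hence $\mu(g)_l \neq 0$ for some $l \geq 1$, so $\mu(g) - 1$ has a nonzero component in positive degree, forcing $\mu(g) \neq 1$ in $U^*$. Therefore $\ker \mu = \{1\}$ and $\mu$ is injective. There is no real obstacle here: the injection lemma supplies a term in $\mu(g)$ whose coefficient cannot be canceled by anything else in the expansion, and the corollary is merely the observation that this suffices to rule $g$ out of the kernel. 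The only sanity check worth flagging is that a nontrivial element always admits a nonempty fully reduced representative, which follows from (M1)--(M3) being equivalences in $A_\G$ together with the fact that the empty word represents $1$.
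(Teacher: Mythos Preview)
Your proposal is correct and follows exactly the paper's approach: the sentence immediately preceding the corollary already notes that Lemma~\ref{injection lemma} yields, for every nontrivial $g$, some $k>0$ with $\mu(g)_k\neq 0$, and the corollary is simply the observation that this forces $\mu(g)\neq 1$. You have merely spelled out the one-line deduction in more detail (including why a nontrivial element has a nonempty fully reduced representative), but the argument is the same.
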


We may now use $\mu$ to study the lower central series of $A_\G$.

\begin{definition} Let $g \in A_\G$.  We define the \emph{derivation} $\delta(g)$ of $g$ to be equal to $\mu(g)_k$, where $k$ is the smallest integer $\geq 1$ such that $\mu(g)_k \neq 0$. If no such $k$ exists, then $g=1$ and we define $\delta(g)=0.$ \end{definition}

The derivation $\delta:A_\G \to U$ satisfies the following properties:

\begin{lemma} \label{l:derivations} Let $g,h \in A_\G$ and suppose that $\delta(g)=\mu(g)_k$ and $\delta(h)=\mu(h)_l$. \begin{enumerate} \item For all integers $N$, $\delta(g^N)=N\mu(g)_k.$ \item If $k<l$ then $\delta(gh)=\delta(hg)=\mu(g)_k.$ \item If $k=l$ and $\mu(g)_k+\mu(h)_l \neq 0$ then $$\delta(gh)=\delta(hg)=\mu(g)_k+\mu(h)_l.$$ \item If $k=l$ and $\mu(g)_k+\mu(h)_l=0$ then either $$gh=1\text{ or } \delta(gh) \in X^{k+1}.$$ \item If $\mu(g)_k\mu(h)_l-\mu(h)_l\mu(g)_k \neq 0$ then $$\delta([g,h])=\mu(g)_k\mu(h)_l-\mu(h)_l\mu(g)_k.$$ \item If $\mu(g)_k\mu(h)_l-\mu(h)_l\mu(g)_k=0$ then either $$[g,h]=0 \text{ or } \delta([g,h])\in X^{k+l+1}.$$ \end{enumerate} \end{lemma}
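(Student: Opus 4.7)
My plan is to set up standard notation and then read off each of the six claims from a direct homogeneous-degree expansion in $U^\infty$. Write $\mu(g) = 1 + a$ and $\mu(h) = 1 + b$ with $a,b \in X$; by definition of $\delta$ we have $a \in X^k$ with $a_k = \mu(g)_k = \delta(g)$, and $b \in X^l$ with $b_l = \mu(h)_l = \delta(h)$. Since $a, b$ lie in $X$, powers satisfy $a^i \in X^{ik}$ and $b^j \in X^{jl}$; this degree bookkeeping is essentially all that the argument requires.

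For (1), I would first handle $N > 0$ via binomial expansion $\mu(g^N) = (1+a)^N = 1 + Na + \binom{N}{2}a^2 + \cdots$. Each term $\binom{N}{j}a^j$ with $j \geq 2$ lies in $X^{2k} \subseteq X^{k+1}$, so the degree-$k$ component of $\mu(g^N) - 1$ is exactly $N\mu(g)_k$. The case $N = 0$ is trivial. For $N < 0$, Proposition \ref{p:inverses} gives $\mu(g)^{-1} = 1 - a + a^2 - \cdots$, whose degree-$k$ part is $-\mu(g)_k$, so $\delta(g^{-1}) = -\delta(g)$; applying the positive case to $g^{-1}$ completes this item.

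For (2), (3), and (4), I would expand
$$\mu(gh) = (1+a)(1+b) = 1 + a + b + ab,$$
and inspect the lowest nontrivial homogeneous component of $a + b + ab$. The product $ab$ contributes only from degree $k+l$ onwards, so the lowest possible nontrivial degree of $\mu(gh) - 1$ is $\min(k,l)$, with degree-$\min(k,l)$ component equal to $\mu(g)_k$ when $k<l$, to $\mu(h)_l$ when $l<k$, and to $\mu(g)_k + \mu(h)_l$ when $k=l$. This gives (2) and (3). For (4), the degree-$k$ part of $\mu(gh) - 1$ vanishes by hypothesis, so either $\mu(gh) = 1$, in which case $gh = 1$ by injectivity of $\mu$, or $\delta(gh) \in X^{k+1}$. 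Swapping the roles of $g$ and $h$ yields the matching statements for $hg$.

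For (5) and (6), I would apply equation \eqref{e:c1} of Proposition \ref{p:identities} to $x = a$ and $y = b$:
$$\mu([g,h]) = 1 + (ab - ba)\sum_{i,j=0}^{\infty}(-1)^{i+j}a^ib^j.$$
The factor $ab - ba$ lies in $X^{k+l}$, with degree-$(k+l)$ component exactly $\mu(g)_k\mu(h)_l - \mu(h)_l\mu(g)_k$. The series $\sum(-1)^{i+j}a^ib^j$ has constant term $1$ and all remaining terms in $X$, so multiplying by it preserves the lowest-degree component of $ab - ba$; this yields (5). For (6), the vanishing hypothesis pushes $ab - ba$ into $X^{k+l+1}$, so $\mu([g,h]) - 1 \in X^{k+l+1}$, and hence either $[g,h] = 1$ or $\delta([g,h]) \in X^{k+l+1}$. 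The only real subtlety in the whole lemma is this last point: one must notice that the power series factor in \eqref{e:c1} has nonzero constant term and therefore cannot conspire to cancel the leading component of $ab - ba$. Everything else is mechanical degree-counting combined with Propositions \ref{p:inverses} and \ref{p:identities}.
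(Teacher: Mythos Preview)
Your argument is correct and follows essentially the same route as the paper: parts (2)--(4) via the expansion $\mu(gh)=1+a+b+ab$, and parts (5)--(6) via equation~\eqref{e:c1} of Proposition~\ref{p:identities}. The only cosmetic difference is in part (1), where the paper deduces it from part (3) by induction on $N$, while you invoke the binomial expansion and Proposition~\ref{p:inverses} directly; these are the same computation in different clothing.
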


\begin{proof}  Parts (2) , (3) and (4) follow from standard properties of multiplication in $U^\infty$. Part (1) follows from part (3), an induction argument on $N >0$, and induction on $N<0$. Parts (5) and (6) follow from Equation \eqref{e:c1} in Proposition \ref{p:identities}. \end{proof}

Let $D_k=\{g \in A_\G: \mu(g)_l=0 \text{ if } 0< l < k \}$. Alternatively, $D_k$ is the set of elements $g \in A_\G$ such that either $g=1$ or $\delta(g)\in X^k$.

\begin{proposition}\label{p:dinfo}  For all $k$, the set $D_k$ is a subgroup of $A_\G$ and these subgroups satisfy: \begin{enumerate} \item $\mathcal{D}=\{D_i\}_{i=1}^{\infty}$ is a central filtration of $A_\G$.
\item $D_k/D_{k+1}$ is a finitely generated free abelian group.
\item  $\gamma_k(A_\G) \subset D_k.$ \end{enumerate} \end{proposition}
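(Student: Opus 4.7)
The plan is to deduce everything from the multiplicative behaviour of the Magnus map, exploiting the fact that $D_k$ is defined by saying $\mu(g)-1\in X^k$. The key structural ingredients are the formula for inverses from Proposition~\ref{p:inverses}, the commutator identity~\eqref{e:c1} from Proposition~\ref{p:identities}, and the observation that $X^k$ is an ideal closed under multiplication with $X^k\cdot X^l\subset X^{k+l}$.

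First I would check that each $D_k$ is a subgroup. If $g\in D_k$ then $\mu(g)=1+x$ with $x\in X^k$; multiplying two such expressions gives $\mu(gh)=1+(x_g+x_h+x_gx_h)$, whose lowest-degree nontrivial homogeneous part is in degree $\geq k$ since $x_gx_h\in X^{2k}\subset X^k$. For inverses, Proposition~\ref{p:inverses} yields $\mu(g)^{-1}=1-x+x^2-\cdots\in 1+X^k$ because each $x^i\in X^{ik}\subset X^k$. So $D_k$ is closed under the group operations.

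Next I would verify the central filtration axioms (F1)--(F3). Axioms (F1) and (F2) are immediate: $D_1=A_\G$ since the defining condition is vacuous, and $D_{k+1}\subset D_k$ since a longer list of vanishing homogeneous parts is a stronger condition. For (F3), suppose $g\in D_k$ and $h\in D_l$, so $\mu(g)=1+x$, $\mu(h)=1+y$ with $x\in X^k$, $y\in X^l$. Applying~\eqref{e:c1} gives
\[\mu([g,h])=1+(xy-yx)\sum_{i,j=0}^\infty(-1)^{i+j}x^iy^j,\]
and since $xy-yx\in X^{k+l}$ while the series has constant term $1$, the entire contribution to $\mu([g,h])-1$ lies in $X^{k+l}$. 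Hence $[g,h]\in D_{k+l}$, and $\mathcal{D}$ is a central filtration. Part~(3) is then a direct application of Proposition~\ref{p:centr}.

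The heart of the proposition is (2). Consider the map $\pi_k:D_k\to U_k$ defined by $\pi_k(g)=\mu(g)_k$ (the degree-$k$ homogeneous part). If $g,h\in D_k$ with $\mu(g)=1+a+(\text{degree}>k)$ and $\mu(h)=1+b+(\text{degree}>k)$ where $a,b\in U_k$, then $\mu(gh)=1+(a+b)+(\text{degree}>k)$, so $\pi_k$ is a homomorphism from $D_k$ to the additive group of $U_k$. By definition its kernel is exactly $D_{k+1}$, giving an injection $D_k/D_{k+1}\hookrightarrow U_k$. Since $M_k$ is finite (bounded by $n^k$), $U_k$ is a finitely generated free abelian group, and any subgroup inherits both properties. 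The main obstacle is really just being careful that all the inverse and commutator manipulations occur inside the truncated quotient without introducing lower-degree terms, and this is exactly what the ideal structure of $X^k$ and the explicit formulas from Propositions~\ref{p:inverses} and~\ref{p:identities} buy us.
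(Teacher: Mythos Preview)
Your proof is correct and follows essentially the same approach as the paper. The paper packages the multiplicative behaviour of $\mu$ into an intermediate lemma about derivations (Lemma~\ref{l:derivations}) and then invokes its parts to verify the subgroup and filtration properties, whereas you appeal directly to Propositions~\ref{p:inverses} and~\ref{p:identities}; the map you call $\pi_k$ is exactly the paper's $\phi$, with the same kernel computation and the same embedding of $D_k/D_{k+1}$ into the finitely generated free abelian group $U_k$.
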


\begin{proof} Parts (2)--(4) of Lemma \ref{l:derivations} imply that $D_k$ is a subgroup of $A_\G$. By definition, $D_1=A_\G$ and $D_{k+1} \leq D_k$ for all $k$.  Also, if $g \in D_k$ and $h \in D_l$, then $[g,h] \in D_{k+l}$ by parts (5) and (6) of Lemma \ref{l:derivations}. Therefore $\mathcal{D}=\{D_i\}$ satisfies the requirements (F1), (F2) and (F3) given in Section \ref{s:Lie} and is a central filtration of $A_\G$. For part (2), we define the map $\phi:D_k \to U_k$ by defining $\phi(g)=\mu(g)_k.$ Equivalently:
$$\phi(g)=\begin{cases} \delta(g) & \text{if $\delta(g)=\mu(g)_k$} \\ 0 &\text{otherwise, when $\delta(g) \in X^{k+1}$.} \end{cases}$$ Parts (2)--(4) of Lemma \ref{l:derivations} imply that $\phi$ is a homomorphism to $U_k$, with kernel $D_{k+1}$. Therefore the quotient group $D_k/D_{k+1}$ is isomorphic to a subgroup of $U_k$. As $U_k$ is a finitely generated free abelian group, so is $D_k/D_{k+1}$. Part (3) is satisfied for all central filtrations of $A_\G$ by Proposition~\ref{p:centr}. \end{proof}

As $\mathcal{D}$ is a central filtration of $A_\G$, we have $\g_i(A_\G) \subset D_i$ for all $i$, and as the Magnus map is injective, $\cap_{i=1}^{\infty} D_i =\{ 1\}$. Hence we may apply Proposition~\ref{p:cf} to the central filtration $\mathcal{D}$ to obtain:

\begin{theorem}
The intersection $\cap_{i=1}^\infty\g_i(A_\G)=\{1\}$ and $A_\G$ is residually torsion-free nilpotent.\end{theorem}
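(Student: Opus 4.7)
The plan is essentially to combine the injectivity of the Magnus map with Proposition~\ref{p:cf} applied to the filtration $\mathcal{D}$ built in Proposition~\ref{p:dinfo}. Proposition~\ref{p:dinfo} already does the hard work: it verifies that $\mathcal{D}$ is a central filtration of $A_\G$ with finitely generated free abelian consecutive quotients, and that $\gamma_k(A_\G) \subset D_k$. Proposition~\ref{p:cf} is tailored to convert exactly this kind of data, once it is also \emph{separating}, into a residually torsion-free nilpotent conclusion. So the only genuine task left is to verify the separating property $\bigcap_{i=1}^\infty D_i = \{1\}$.

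To do this I would take $g \in \bigcap_{i=1}^\infty D_i$ and unravel the definition of $D_k$: lying in $D_k$ for every $k$ means $\mu(g)_\ell = 0$ for every $\ell \geq 1$, so $\mu(g) = 1$ in $U^*$. The corollary to Lemma~\ref{injection lemma} (injectivity of $\mu$) then forces $g = 1$. With $\mathcal{D}$ thus shown to be separating, Proposition~\ref{p:cf} immediately yields that $A_\G/D_k$ is torsion-free nilpotent for every $k$, and hence $A_\G$ is residually torsion-free nilpotent.

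The claim on the lower central series is then a direct consequence: Proposition~\ref{p:dinfo}(3) gives $\gamma_i(A_\G) \subset D_i$ for every $i$, hence
$$\bigcap_{i=1}^\infty \gamma_i(A_\G) \;\subset\; \bigcap_{i=1}^\infty D_i \;=\; \{1\}.$$
There is no real obstacle here; all of the technical work has been absorbed into Propositions~\ref{p:dinfo} and \ref{p:cf}, and the only non-formal ingredient is the injectivity of $\mu$, which is already in place via Lemma~\ref{injection lemma}. If anything, the subtle point worth flagging is that residual torsion-free nilpotence is being witnessed by the quotients $A_\G/D_k$ rather than the quotients $A_\G/\gamma_k(A_\G)$ a priori; the fact that these two families of quotients ultimately agree is the content of the later Theorem~\ref{t:main}, but is not needed for the statement proved here.
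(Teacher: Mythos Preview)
Your proposal is correct and matches the paper's own argument essentially verbatim: the paper observes that injectivity of $\mu$ forces $\cap_{i=1}^\infty D_i=\{1\}$, then applies Proposition~\ref{p:cf} to the central filtration $\mathcal{D}$ furnished by Proposition~\ref{p:dinfo}, with the containment $\gamma_i(A_\G)\subset D_i$ yielding the triviality of $\cap_i\gamma_i(A_\G)$. Your closing remark about the quotients $A_\G/D_k$ versus $A_\G/\gamma_k(A_\G)$ is apt and consistent with how the paper defers their equality to Theorem~\ref{t:main}.
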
 

 We finish this section with a proof of a normal form theorem for elements of $A_\G$.  This is well-known; Green's thesis \cite{Green} contains a combinatorial proof involving case-by-case analysis. Green's work also extends more generally to graph products of groups. We give a proof for RAAGs using the Magnus map. The first step is an immediate consequence of Lemma \ref{injection lemma}:
 
 \begin{proposition}\label{p:lengths}
Let $g \in A_\G$. Let $w=v_{p_1}^{e_1}\cdots v_{p_k}^{e_k}$ and $w'=v_{q_1}^{f_1}\cdots v_{q_l}^{f_l}$ be two fully reduced representatives of $g$. Then $k=l$.
\end{proposition}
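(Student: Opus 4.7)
The plan is to simply invoke Lemma \ref{injection lemma}, which already does almost all of the work. That lemma associates to every nontrivial $g \in A_\G$ a canonical positive integer, namely the largest degree $k$ at which $\mu(g)_k$ contains a square-free monomial with nonzero coefficient, and asserts that every fully reduced representative of $g$ has length exactly this $k$. Since this $k$ depends only on $g$ (it is read off from $\mu(g) \in U^\infty$), any two fully reduced representatives of the same element must have the same length.

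More precisely, I would first dispose of the trivial case: if $g = 1$, then any fully reduced representative must be the empty word, since applying moves (M1), (M2), (M3) to a nonempty fully reduced word cannot produce the empty word (the moves (M2) and (M3) preserve having nonzero exponents, and in a fully reduced word no (M1) reduction is possible). Hence $k = l = 0$ in that case.

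For nontrivial $g$, let $k_0$ be the integer supplied by Lemma \ref{injection lemma}. That lemma, applied to the first fully reduced representative $w = v_{p_1}^{e_1}\cdots v_{p_k}^{e_k}$, gives $k = k_0$; applied to the second fully reduced representative $w' = v_{q_1}^{f_1}\cdots v_{q_l}^{f_l}$, it gives $l = k_0$. Therefore $k = l$.

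There is essentially no obstacle here, since the content lies in Lemma \ref{injection lemma}: the observation that the leading square-free monomial of $\mu(g)$ depends only on $g$, together with the fact that its degree equals the length of \emph{any} fully reduced representative, is exactly what makes length a well-defined invariant. Proposition \ref{p:lengths} is just the restatement that this invariant does not depend on the choice of fully reduced word.
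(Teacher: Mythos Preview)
Your approach is exactly the paper's: the proposition is stated as an immediate consequence of Lemma~\ref{injection lemma}, and your argument for nontrivial $g$ is precisely that.

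One small caveat on your handling of $g=1$: the fact that no sequence of moves (M1)--(M3) takes a nonempty fully reduced word to the empty word does \emph{not} by itself show the word represents a nontrivial group element, since those moves are not a complete rewriting system for $A_\G$ (they only shorten or swap, never insert). The clean way to dispose of this case is to note that the computation inside the proof of Lemma~\ref{injection lemma} already shows that a nonempty fully reduced word $v_{p_1}^{e_1}\cdots v_{p_k}^{e_k}$ has $\mu(g)_k \neq 0$, hence $\mu(g) \neq 1$, hence $g \neq 1$; so the only fully reduced representative of the identity is the empty word.
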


In fact, we can prove something much more detailed:

\begin{theorem} \label{t:nform}
Let $g \in A_\G$. Let $w=v_{p_1}^{e_1}\cdots v_{p_k}^{e_k}$ and $w'=v_{q_1}^{f_1}\cdots v_{q_k}^{f_k}$ be two fully reduced representatives of $g$. Then we may obtain $w$ from $w'$ by a sequence of swaps (moves of the form $v_{p_i}^{e_i}v_{p_{i+1}}^{e_{i+1}} \mapsto v_{p_{i+1}}^{e_{i+1}}v_{p_i}^{e_i}$ when $[v_{p_i},v_{p_{i+1}}]=1$). \end{theorem}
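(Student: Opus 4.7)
The plan is to induct on the common length $k$ of the two fully reduced representatives, which is already well-defined by Proposition \ref{p:lengths}. The base case $k=0$ is vacuous, and $k=1$ follows from Lemma \ref{injection lemma}: the (unique) top-degree square-free term of $\mu(g)$ forces $v_{p_1}=v_{q_1}$ and $e_1=f_1$, so no swaps are needed.

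For the inductive step, I first extract a distinguished position in $w'$ matching the first position of $w$. By Lemma \ref{injection lemma} the underlying positive words $v_{p_1}\cdots v_{p_k}$ and $v_{q_1}\cdots v_{q_k}$ represent the same element of $M$, hence are related by a finite sequence of swaps in $W(V)$. Since $\G$ has no self-loops, two occurrences of the same letter can never be swapped past one another, so tracking the position originally occupied by $v_{p_1}$ at the start of $w$ yields a unique index $j\in\{1,\ldots,k\}$ with $v_{q_j}=v_{p_1}$, and forces each of $v_{q_1},\ldots,v_{q_{j-1}}$ to commute with $v_{p_1}$ in $A_\G$ (having been swapped past it at some stage). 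Using these commutations, swapping $v_{p_1}^{f_j}$ through the commuting letters to the front of $w'$ produces a new fully reduced representative $w''=v_{p_1}^{f_j}v_{q_1}^{f_1}\cdots v_{q_{j-1}}^{f_{j-1}}v_{q_{j+1}}^{f_{j+1}}\cdots v_{q_k}^{f_k}$ of $g$.

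The crux of the argument, and the step I expect to be the main obstacle, is pinning down $e_1=f_j$. Left-multiplying $g$ by $v_{p_1}^{-e_1}$ produces, from $w$, the fully reduced word $v_{p_2}^{e_2}\cdots v_{p_k}^{e_k}$ of length $k-1$, and from $w''$ the word $v_{p_1}^{f_j-e_1}v_{q_1}^{f_1}\cdots v_{q_{j-1}}^{f_{j-1}}v_{q_{j+1}}^{f_{j+1}}\cdots v_{q_k}^{f_k}$. If $f_j\neq e_1$ the latter is still fully reduced (only an exponent has changed) and has length $k$, contradicting the length invariance of Proposition \ref{p:lengths}; hence $f_j=e_1$ and after an (M1) reduction both expressions are fully reduced of length $k-1$.

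Finally, applying the inductive hypothesis to the length-$(k-1)$ fully reduced representatives $v_{p_2}^{e_2}\cdots v_{p_k}^{e_k}$ and $v_{q_1}^{f_1}\cdots v_{q_{j-1}}^{f_{j-1}}v_{q_{j+1}}^{f_{j+1}}\cdots v_{q_k}^{f_k}$ of $v_{p_1}^{-e_1}g$ produces a sequence of swaps between them. Prepending $v_{p_1}^{e_1}$ throughout carries $w$ to $v_{p_1}^{e_1}v_{q_1}^{f_1}\cdots v_{q_{j-1}}^{f_{j-1}}v_{q_{j+1}}^{f_{j+1}}\cdots v_{q_k}^{f_k}=w''$ (using $e_1=f_j$), and then reversing the initial rearrangement by swapping $v_{p_1}^{f_j}$ back through the commuting letters $v_{q_1}^{f_1},\ldots,v_{q_{j-1}}^{f_{j-1}}$ delivers $w'$, completing the induction.
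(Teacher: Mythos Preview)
Your proof is correct and follows the same inductive skeleton as the paper's: peel off the first syllable of $w$, locate a matching syllable in $w'$, use Proposition~\ref{p:lengths} to force the exponents to agree, and recurse. The one genuine difference is in how you locate the matching position. The paper argues directly that $v_{p_1}^{-e_1}w'$ cannot be fully reduced (again via Proposition~\ref{p:lengths}), and the failure of full reduction immediately produces an index $l$ with $v_{q_l}=v_{p_1}$ and $[v_{p_1},v_{q_i}]=1$ for all $i<l$. You instead invoke the stronger conclusion of Lemma~\ref{injection lemma}---that the underlying square-free words agree as elements of $M$---and then run a letter-tracking argument through the defining swaps of $M$ to find $j$. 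Your route extracts more from what has already been proved and makes the commutation of $v_{q_1},\ldots,v_{q_{j-1}}$ with $v_{p_1}$ (and the fact that none of them equals $v_{p_1}$) transparent; the paper's route is slightly leaner, needing only the length invariant at that step.

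One small point you assert without checking: that $w''$ remains fully reduced after moving $v_{q_j}^{f_j}$ to the front, and hence so does its truncation. This is true and easy---removing $v_{q_j}$ from its original slot cannot create a mergeable pair across that slot, since $v_{q_j}$ commutes with everything to its left and so could not have served as the required non-commuting intermediate---but it deserves a sentence, as your application of Proposition~\ref{p:lengths} depends on it.
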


\begin{proof}
We proceed by induction on $k$. We first look at the element $v_{p_1}^{-e_1}g \in A_\G.$ Note that $v_{p_2}^{e_2}\cdots v_{p_k}^{e_k}$ and $v_{p_1}^{-e_1}v_{q_1}^{f_1}\cdots v_{q_k}^{f_k}$ are two representatives of $v_{p_1}^{-e_1}g$, and the former representative is fully reduced. By Proposition \ref{p:lengths} the latter cannot be fully reduced, so there exists $l$ such that ${q_l}={p_1}$ and $[v_{p_1},v_{q_i}]=1$ for $i \leq l$. If $f_l \neq e_1$, then $$v_{q_1}^{f_1}\cdots v_{q_l}^{f_l-e_1} \cdots v_{q_k}^{f_k}$$  is a fully reduced representative of $v_{p_1}^{-e_1}g$, however this also contradicts Proposition \ref{p:lengths}. Therefore $e_1=f_l$, and after applying a sequence of swaps to $w'$ we may assume that $v_{p_1}=v_{q_1}$ and $e_1=f_1$. By induction, $v_{p_2}^{e_2}\cdots v_{p_k}^{e_k}$ may be obtained from $v_{q_2}^{f_2}\cdots v_{q_k}^{f_k}$ by a sequence of swaps, therefore $w$ may be obtained from $w'$ by a sequence of swaps. \end{proof}

Given $g \in A_\G$, let $init(g)$ (respectively $term(g)$) be the set of vertices of $\G$ that can occur as the initial (respectively terminal) letter of a fully reduced word representing $g$. We say that $g$ is \emph{positive} if $g=1$ or $g$ can be written as a product $v_1^{e_1}\cdots v_k^{e_k}$ with $e_i > 0$ for all $i$. As any two fully reduced representatives may be obtained from each other by a sequence of swaps, we have the following immediate corollaries:

\begin{corollary}
For any $g \in A_\G \smallsetminus \{1\}$, the sets $init(g)$ and $term(g)$ form cliques in $\G$\,: any pair of vertices in $init(g)$ or $term(g)$ commute. \end{corollary}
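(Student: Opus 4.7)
The plan is to deduce this directly from the normal form theorem (Theorem \ref{t:nform}): two fully reduced representatives of the same element differ by a sequence of swaps, and a swap can only exchange two syllables whose generators are adjacent in $\G$, so any rearrangement forced by the existence of two different ``starting letters'' for $g$ must be witnessed by a swap between those two letters.

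To carry this out, suppose $v_i \neq v_j$ are both in $init(g)$. By definition there exist fully reduced representatives $w = v_{p_1}^{e_1}\cdots v_{p_k}^{e_k}$ and $w' = v_{q_1}^{f_1}\cdots v_{q_k}^{f_k}$ of $g$ with $v_{p_1} = v_i$ and $v_{q_1} = v_j$. Theorem \ref{t:nform} supplies a sequence of swaps taking $w$ to $w'$. I would label the $k$ syllables of $w$ by their initial positions and follow them through the sequence: since each swap is an adjacent transposition, the labels persist and yield a bijection between the syllables of $w$ and those of $w'$. The syllable now occupying position $1$ of $w'$ has generator $v_j$ and hence was originally at some position $l \geq 2$, while the syllable originally at position $1$ of $w$ (with generator $v_i$) ends up at some position $\geq 2$ in $w'$. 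The relative order of these two distinguished syllables has therefore been reversed over the course of the sequence, and since adjacent transpositions can reverse the relative order of two syllables only by exchanging them directly, there is a moment at which these two specific syllables are swapped past each other. The legality of that swap forces $v_i$ and $v_j$ to span an edge of $\G$, so $init(g)$ is a clique. The argument for $term(g)$ is identical after replacing ``first position'' with ``last position'' (or, equivalently, by applying the above to $g^{-1}$, whose fully reduced representatives are the formal reverses of those of $g$).

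The only point that needs checking carefully is the tracking claim: a sequence of adjacent transpositions on a sequence of $k$ labelled objects determines a well-defined bijection on labels, and any pair of labels whose relative order is reversed must be directly swapped at some step. This is a standard combinatorial fact about permutations factored into adjacent transpositions, and I do not anticipate it being a genuine obstacle; the substantive input is entirely Theorem \ref{t:nform}.
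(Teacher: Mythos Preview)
Your argument is correct and is exactly the intended one: the paper states this as an immediate corollary of Theorem~\ref{t:nform} with no further proof, and what you have written is precisely the natural way to unpack that word ``immediate.'' The tracking-of-syllables argument (two labelled items whose relative order is reversed by a sequence of adjacent transpositions must be directly exchanged at some step) is indeed standard and presents no difficulty.
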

 
\begin{corollary}
The monoid $M$ is isomorphic to the set of positive elements of $A_\G$ under multiplication. \end{corollary}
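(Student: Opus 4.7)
The plan is to show that the obvious monoid homomorphism $\iota: M \to A_\G$ defined on generators by $\mathbf{v_i} \mapsto v_i$ is an isomorphism onto the submonoid of positive elements. Well-definedness is immediate, since every defining relation $\mathbf{v_i}\mathbf{v_j}=\mathbf{v_j}\mathbf{v_i}$ of $M$ (for an edge of $\G$) also holds in $A_\G$, so concatenation on $W(V)$ descends through $\sim$ to a monoid map. Surjectivity onto positive elements is clear from the definition of a positive element. Everything then reduces to injectivity of $\iota$.

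For injectivity, I would take $m, m' \in M$ with $\iota(m) = \iota(m')$, pick positive representative words $w, w' \in W(V)$, and aim to show $w \sim w'$. The strategy is two-step: first, within each $\sim$-class reduce to a fully reduced representative; then use Theorem~\ref{t:nform} to connect two fully reduced representatives of the same element of $A_\G$.

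For the first step, write a positive word in grouped exponent form $v_{p_1}^{e_1}\cdots v_{p_k}^{e_k}$ with $p_i \neq p_{i+1}$. If it is not fully reduced, choose a pair $i<j$ with $p_i = p_j$ such that every $v_{p_l}$ with $i<l<j$ commutes with $v_{p_i}$; slide the $e_j$ copies of $v_{p_j}$ leftward past these intermediate letters one at a time, each single-letter swap being an elementary $\sim$-move (the required commutation is exactly a defining commutation of $M$), and merge with $v_{p_i}^{e_i}$. The merge is invisible at the level of letter sequences, while the number of grouped blocks strictly decreases, so iteration terminates at a fully reduced $\tilde w \sim w$; similarly produce $\tilde{w}' \sim w'$. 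For the second step, Theorem~\ref{t:nform} supplies a sequence of block swaps $v_p^a v_q^b \mapsto v_q^b v_p^a$ (with $[v_p,v_q]=1$) transforming $\tilde w$ into $\tilde w'$. Each block swap decomposes into $ab$ elementary letter swaps, i.e.\ $\sim$-moves, so $\tilde w \sim \tilde w'$, and transitivity gives $w \sim w'$.

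The main obstacle is really a matter of bookkeeping rather than content: I must be careful that the grouping convention (which is part of the definition of being fully reduced) is compatible with the elementary single-letter $\sim$-moves that generate the equivalence on $W(V)$. Once one accepts that exponent-combining is a non-move at the letter level and that each block commutation is a finite product of letter commutations, the argument goes through cleanly, and the isomorphism of monoids follows.
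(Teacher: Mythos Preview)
Your argument is correct and matches the paper's approach: the paper states this as an immediate corollary of Theorem~\ref{t:nform}, relying on exactly the observation you spell out---that any positive word is $\sim$-equivalent to one whose grouped form is fully reduced, and that the block swaps of Theorem~\ref{t:nform} decompose into single-letter $\sim$-moves. Your write-up simply makes explicit the bookkeeping the paper leaves to the reader; the one cosmetic point is that when choosing the violating pair $i<j$ you should take $j$ minimal so that no intermediate block carries the same letter, but this does not affect the argument.
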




\section{Lyndon elements of $M$}\label{s:Lyndon}

Let $\mathcal{L}(U)$ be the Lie algebra we obtain by endowing $U$ with the bracket operation $[a,b]=ab-ba$. We will now study the Lie subalgebra of $\mathcal{L}(U)$ generated by the set $\{\mathbf{v_1,\ldots,v_n}\}$. We call this subalgebra $L_\G$. The approach is as follows: we first introduce a subset of $M$ called the set of \emph{Lyndon elements}, $LE(M)$. We describe a method for supplying each Lyndon element with a bracketing. If $L$ is a Lie algebra and $X=\{x_1,\ldots,x_n\}\subset L$ then this bracketing induces a homomorphism (as $\mathbb{Z}$--modules) $\phi_X:\mathbb{Z}[LE(M)]\to L$. In the case that $X=\{\mathbf{v_1},\ldots,\mathbf{v_n}\} \subset L_\G$ we call this induced homomorphism $\ell$, and show that $\ell$ is bijective. Thus we obtain a basis of $L_\G$ in terms of bracketed Lyndon elements. In general, if the elements of $X$ satisfy $$[x_i,x_j]=0 \text{ if } [v_i,v_j]=1$$ then we will show that $\phi_X \ell^{-1}:L_\G \to L$ is an algebra homomorphism taking $\mathbf{v_i}$ to $x_i$. This property will then be used in the next section to show that $L_\G$ and the lower central series algebra of $A_\G$ 
are isomorphic. 

We deviate here from the approach in Magnus, and instead follow the paper of Lalonde \cite{L1}. The analogous free group version is contained in Chapter 5 of \cite{Lothaire}, and we must start in this world. We first define a lexicographic order on the set of positive words $W(V)$:

\begin{definition}
The \emph{lexicographic ordering} on $W(V)$ is the unique total order $<$ on $W(V)$ that satisfies the following:
\begin{enumerate}
\item For any nonempty word $w$, we have $\emptyset < w$.
\item If $w_1$ and $w_2$ are distinct nonempty words and $x,y \in W(V)$ such that $w_1=v_ix$ and $w_2=v_jy$, then $w_1 <w_2$ if either
\begin{enumerate}
\item $i<j$ or:
\item $i=j$ and $x <y$.
\end{enumerate}
\end{enumerate}
\end{definition}

In particular, $\emptyset < v_1 < v_2 < \ldots < v_n$. We state two basic properties of this order:

\begin{lemma} \label{l:o1}
Let $x,y,z \in W(V)$. \begin{itemize}\item if $y < z$ then $xy < xz$ \item 
if $|x| \geq |y|$ and $x<y$ then $xz<yz$ \end{itemize} 
\end{lemma}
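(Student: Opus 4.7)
The plan is to derive both clauses from the recursive structure of the lexicographic order, handling them separately.

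For the first clause, I would induct on $|x|$. The base case $x = \emptyset$ is immediate. For the inductive step, write $x = v_i x'$; then $xy = v_i(x'y)$ and $xz = v_i(x'z)$ share a first letter, so by clause (2) of the definition comparing them reduces to comparing $x'y$ with $x'z$, which is true by the induction hypothesis applied to the shorter word $x'$.

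For the second clause, I would analyze the longest common prefix $u$ of $x$ and $y$, writing $x = u x_1$ and $y = u y_1$. The main task is to rule out the degenerate cases where one of $x_1,y_1$ is empty. If $y_1 = \emptyset$, then $y = u$ is a (proper or improper) prefix of $x$, which together with the characterisation of the order on words with a common prefix forces $y \leq x$, contradicting $x < y$. If $x_1 = \emptyset$, then $x$ is a proper prefix of $y$, so $|x| < |y|$, contradicting the hypothesis $|x| \geq |y|$. Hence both $x_1$ and $y_1$ are nonempty; write $x_1 = v_i x'$ and $y_1 = v_j y'$ with $i \neq j$ (by maximality of $u$). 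Applying clause (2) iteratively to the common prefix $u$ and using $x < y$, we conclude $i < j$.

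To finish, observe that $xz = u v_i x' z$ and $yz = u v_j y' z$ also share the prefix $u$ and differ at the next letter with $v_i$ vs $v_j$, so another application of clause (2) gives $xz < yz$.

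The step I expect to require the most care is the case analysis in the second clause, specifically the verification that the hypothesis $|x| \geq |y|$ is exactly what is needed to rule out $x$ being a proper prefix of $y$; the first clause and the final comparison are essentially formal.
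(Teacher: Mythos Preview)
Your argument is correct and is the standard way to verify these facts. The paper, however, does not actually prove this lemma: it is introduced with the sentence ``We state two basic properties of this order'' and then left without proof, so there is nothing to compare your approach against.
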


The above lemma remains valid if we replace all occurrences of strong inequalities with weak inequalities. The natural projection $\pi:W(V) \to M$, when coupled with the ordering of $W(V)$, gives us a way of choosing a representative in $W(V)$ for each element of $M$: 

\begin{definition} Let $m \in M$. Then we define $std(m)\in W(V)$, the \emph{standard representative} of $m$ to be the largest element of $\pi^{-1}\{m\}$ with respect to the lexicographic order. \end{definition}

\begin{example} 
Let $\G$ be the small example graph given in Figure 1. If $m \in M$ is the element represented by the word $v_1v_2v_3$ then then $\pi^{-1}(m)=\{v_1v_2v_3, v_1v_3v_2\}$ and $std(m)=v_1v_3v_2$.
\end{example}

\begin{figure} [ht]
\includegraphics{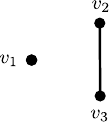}
\centering
\caption{A small example graph $\G$.}
\label{fig:minigraph}
\end{figure}

We then define a total order on $M$ as follows: if $a,b\in M$  we say $$a < b \text{ if and only if } std(a)<std(b).$$ Lemma \ref{l:o1} then implies the following:

\begin{lemma} \label{l:ordering} Let $a,b,c \in M$ 
\begin{itemize} \item $std(ab) \geq std(a)std(b) \geq std(a)$ \item If $b < c$ then $std(a)std(b)<std(a)std(c)$ \item If $|a| \geq |b|$ and $a < b$, then $std(a)std(c) < std(b)std(c)$ \end{itemize} \end{lemma}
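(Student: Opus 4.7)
The plan is to derive all three statements directly from Lemma \ref{l:o1} together with the defining property of $std$: namely, that $std(m)$ is the lexicographically maximum representative in $\pi^{-1}\{m\}$, where $\pi : W(V) \to M$ is the natural projection.

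For the first bullet I would split the claim into $std(ab) \geq std(a)\,std(b)$ and $std(a)\,std(b) \geq std(a)$. The first inequality is immediate: since $\pi$ is a monoid homomorphism, the concatenation $std(a)\,std(b)$ lies in $\pi^{-1}\{ab\}$, and $std(ab)$ is by definition the maximum element of that fibre. The second inequality follows from the weak form of Lemma \ref{l:o1}: applying it with $y = \emptyset \leq std(b) = z$ and $x = std(a)$ gives $std(a) \leq std(a)\,std(b)$.

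The second and third bullets are, after unwinding the definition $a < b \iff std(a) < std(b)$, direct instances of Lemma \ref{l:o1}. For the second, I would take $x = std(a)$, $y = std(b)$, $z = std(c)$ in the first part of Lemma \ref{l:o1}. For the third, I would use that length is preserved by $\pi$, so $|std(a)| = |a| \geq |b| = |std(b)|$, and then invoke the second part of Lemma \ref{l:o1} with $x = std(a)$, $y = std(b)$, $z = std(c)$.

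The only conceptual point worth flagging—and the reason the first bullet is a weak inequality rather than an equality—is that the concatenation of standard representatives need not itself be standard: commutations across the boundary between $std(a)$ and $std(b)$ can produce a lexicographically larger word in $\pi^{-1}\{ab\}$. Apart from keeping the distinction between ordering in $W(V)$ and ordering in $M$ clean, there is no real obstacle; the argument is essentially careful bookkeeping with Lemma \ref{l:o1}.
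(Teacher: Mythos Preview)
Your proposal is correct and matches the paper's approach exactly: the paper states the lemma without proof, prefacing it only with ``Lemma~\ref{l:o1} then implies the following,'' and your argument is precisely the unpacking of that implication using the definition of $std$ as the lexicographically maximal representative. Nothing needs to be added or changed.
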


\subsection{Lyndon words}

We now describe the notion of \emph{Lyndon words}. These were first introduced by Chen, Fox, and Lyndon in \cite{CFL}. In this paper, the authors show that in the free group case, the groups $D_k$ introduced in the last section are equal to the terms of the lower central series of $F_n$, and they give an algorithm to determine a presentation of a consecutive quotient $\gamma_k/\gamma_{k+1}$ of the lower central series for any finitely presented group. This algorithm is quite complicated, however we shall use the notion of \emph{Lyndon elements} in $M$, introduced by Lalonde in \cite{L1}, to give a simple algorithm to describe $\gamma_k/\gamma_{k+1}$ in an arbitrary right-angled Artin group. Chen, Fox, and Lyndon also relate coefficients of elements in $\mu(g)$ to \emph{Fox derivatives}. Unfortunately these do not appear to have a natural analogue in the partially commutative setting.

We say that $w_1$ and $w_2$ are \emph{conjugate} in $W(V)$ if there exist $x,y \in W(V)$ such that $w_1=xy$ and $w_2=yx$. Alternatively, $w_1$ and $w_2$ are conjugate if they are conjugates in the free group $F_n$ in the usual sense, where $W(V)$ is viewed as a subset of $F_n$. The \emph{conjugacy class} of $w$ in $W(V)$ is the set of all elements conjugate to $w$ in $W(V)$. A word $w$ is \emph{primitive} if there does not exist $x,y \in W(V) \setminus \{\emptyset\}$ such that $w=xy=yx$. Equivalently, each nontrivial cyclic permutation of $w$ is distinct from $w$.

\begin{definition}
$w \in W(V)$ is a \emph{Lyndon word} if it is nontrivial, primitive and minimal with respect to the lexicographic ordering in its conjugacy class. \end{definition}

\begin{example}If $V=\{v_1,v_2,v_3,v_4\}$ then $v_i$ is a Lyndon word for all $i$, and $v_1v_2v_1v_3$ and $v_1v_1v_2$ are Lyndon words. The word $v_1v_1$ is not a Lyndon word as it is not primitive, and $v_1v_3v_1v_2$ is not a Lyndon word as it is not minimal in its conjugacy class (the word $v_1v_2v_1v_3$ is).
\end{example}

There is an assortment of equivalent definitions of Lyndon words.

\begin{theorem}[\cite{CFL}, Theorem 1.4] \label{t:lwdefinitions}
Let $w \in W(V)$. The following are equivalent: \begin{enumerate} \item $w$ is a Lyndon word. \item For all $x,y \in W(V)\smallsetminus \{\emptyset\}$ such that $w=xy$, one has $w<y$. \item Either $w=v_i$ for some $i$ or there exist Lyndon words $x$ and $y$ with $x <y$ such that $w=xy$. \end{enumerate} \end{theorem}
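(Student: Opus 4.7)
The plan is to establish the equivalences using Lemma~\ref{l:o1} and the elementary fact that for any nonempty $x \in W(V)$ and any $y \in W(V)$ we have $y < yx$, since $y$ is a proper prefix of $yx$. For $(2) \Rightarrow (1)$: any nontrivial factorisation $w = xy$ yields $w < y < yx$, so $w$ is minimal in its conjugacy class. Primitivity follows because $w = u^k$ with $k \geq 2$ would make $y = u^{k-1}$ a proper prefix of $w$, so $y < w$, contradicting $w < y$.

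For $(3) \Rightarrow (2)$, I induct on $|w|$, with base $w = v_i$ vacuous. Given $w = xy$ with $x, y$ Lyndon and $x < y$, I handle a proper nonempty suffix $s$ of $w$ case by case. If $|s| > |y|$ then $s = x' y$ for a proper nonempty suffix $x'$ of $x$; by induction $x < x'$, and Lemma~\ref{l:o1} gives $xy < x' y = s$. If $s = y$, then either $x$ and $y$ first differ at some position $i \leq |x|$, directly giving $xy < y$, or $x$ is a proper prefix of $y$, say $y = x y'$; in the latter case induction on $y$ gives $y < y'$, and Lemma~\ref{l:o1} then yields $xy < x y' = y$. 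If $|s| < |y|$, induction on $y$ gives $y < s$, which combined with $w < y$ (just established) yields $w < s$ by transitivity.

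For $(2) \Rightarrow (3)$, assume $|w| \geq 2$ and let $y$ be the lex-smallest proper nonempty suffix of $w$, writing $w = xy$. Any proper nonempty suffix of $y$ is a proper nonempty suffix of $w$ of smaller length, hence strictly greater than $y$ by minimality; so $y$ satisfies (2) and is Lyndon by induction and $(2) \Rightarrow (1)$. Property (2) applied to $w$ gives $w < y$, and direct comparison of $xy$ with $y$ yields $x < y$. For any proper nonempty suffix $x'$ of $x$, the word $x' y$ is a proper nonempty suffix of $w$ and so $w < x' y$ by (2); if $x'$ were a prefix of $x$, writing $x = x' x''$ would make $x'' y$ a proper nonempty suffix of $w$ satisfying $x''y < y$ (after cancelling $x'$ on the left), contradicting minimality of $y$. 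Thus $x$ and $x'$ first differ at some position $i \leq |x'|$, giving $x < x'$; so $x$ satisfies (2) and is Lyndon.

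Finally, for $(1) \Rightarrow (2)$: given $w$ Lyndon and $w = xy$, minimality and primitivity give $w < yx$. If $w$ and $yx$ differ within the first $|y|$ positions, the same witness yields $w < y$ directly. The remaining case is that $y$ is a prefix of $w$, making $y$ a nontrivial border of $w$. The main obstacle is ruling out this border case for a Lyndon word. I will take such a border of minimal length and split on whether $2|y| \leq |w|$: in the short case, $w = y\,m\,y$ for some middle word $m$, and comparing $w$ with suitable cyclic rotations (for instance the rotation by $|y|$, which gives $m y y$) together with primitivity produces a contradiction; in the long case the two occurrences of $y$ overlap, and a Fine--Wilf style argument forces $w$ to have period $|w| - |y|$, making $w$ a nontrivial power of a shorter word and again contradicting primitivity.
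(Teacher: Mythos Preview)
The paper does not supply a proof of this theorem; it is quoted with a citation to \cite{CFL}. So there is nothing to compare against, and I will simply assess your argument.

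Your proofs of $(2)\Rightarrow(1)$, $(3)\Rightarrow(2)$, and $(2)\Rightarrow(3)$ are correct. One small point: in $(3)\Rightarrow(2)$ you write ``by induction $x<x'$'' for a proper suffix $x'$ of the Lyndon word $x$, but the induction hypothesis as stated is only that $(3)\Rightarrow(2)$ holds for shorter words, whereas $x$ is given to satisfy (1), not (3). This is harmless provided you run the entire proof as a single induction on $|w|$ establishing all three equivalences simultaneously; say so explicitly.

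The real gap is in $(1)\Rightarrow(2)$, in the border case. Your long-border claim---that overlap plus a Fine--Wilf style argument ``forces $w$ to have period $|w|-|y|$, making $w$ a nontrivial power''---does not work: any border of length $|y|$ already gives period $|w|-|y|$, overlap or not, and a period $p$ only forces $w$ to be a power when $p$ divides $|w|$. What the overlap actually gives you is that the overlapping segment is a proper nonempty border of $y$, and hence a strictly shorter border of $w$, contradicting minimality of $|y|$. So the long case collapses to the short case rather than to non-primitivity. Your short-case sketch is too vague to evaluate, but here is a direct argument that avoids the case split entirely. Write $w=yu=vy$ with $u,v$ nonempty and $|u|=|v|$. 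Rotating $w=vy$ gives the conjugate $yv$, and $w<yv$ strictly, since $vy=yv$ with $v,y$ nonempty would make $w$ a proper power; comparing with $w=yu$ and cancelling the common prefix $y$ yields $u<v$. Symmetrically, rotating $w=yu$ gives $w<uy$ strictly, and comparing with $w=vy$ (using $|u|=|v|$) yields $v<u$. These two inequalities contradict one another, so a Lyndon word has no proper nonempty border, and your reduction then finishes $(1)\Rightarrow(2)$.
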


The third of these characterisations is particularly appealing, as it allows one to build up a list of Lyndon words recursively.

\begin{example}
If $V=\{v_1,v_2,v_3\}$, then the Lyndon words of length less than or equal to 3 are: \begin{align*} &v_1,\, v_2,\, v_3, \\
& v_1v_2,\, v_1v_3,\, v_2v_3, \\ &v_1v_1v_2,\, v_1v_1v_3,\, v_1v_2v_3,\, v_2v_2v_3,\, v_1v_2v_2,\, v_1v_3v_3,\, v_2v_3v_3. \end{align*}

Note that the decomposition of a Lyndon word of length $>1$ as a product of two smaller Lyndon words assured to us by part (3) of Theorem \ref{t:lwdefinitions} is not always unique. In this example $v_1v_2v_3$ may be decomposed as $v_1.v_2v_3$ and $v_1v_2.v_3$.
\end{example}

\subsection{Lyndon elements}

Lyndon elements are the natural generalisations of Lyndon words to the partially commutative setting. Defining conjugation here is more tricky. We first say that two elements $m_1, m_2$ of $M$ are \emph{transposed} if there exist $x,y \in M$ such that $m_1=xy$ and $m_2=yx$. Unfortunately transposition is not an equivalence relation; if $\G$ is the graph shown in Figure~\ref{fig:minigraph}, then $$ v_2v_1v_3 \leftrightarrow_{trans.} v_1v_3v_2 = v_1v_2v_3 \leftrightarrow_{trans.} v_3v_1v_2, $$ however $v_3v_1v_2$ cannot be obtained from $v_2v_1v_3$ by a single transposition. We therefore say two elements of $M$ are \emph{conjugate} if one can be obtained from the other by a sequence of transpositions. Equivalently, two elements are conjugate in $M$ if and only if they are conjugate in $A_\G$ in the group theoretic sense (when $M$ is viewed as a subset of $A_\G$). The set of all elements in $M$ conjugate to $m$ is its \emph{conjugacy class}. We say that $m$ is primitive if there do not exist nontrivial $x$ and 
$y$ in $M$ such that $m=xy=yx$.

\begin{definition}
$m \in M$ is a \emph{Lyndon element} if it is nontrivial, primitive, and minimal with respect to the ordering of $M$ in its conjugacy class. \end{definition}

Given $g \in A_\G$, we remind the reader that $init(g)$ is the set of vertices that can appear as the initial letter in reduced words representing $g$. 

\begin{proposition}[\cite{KL}, Corollary 3.2]
If $m$ is a Lyndon element, then $init(m)$ is a single vertex.
\end{proposition}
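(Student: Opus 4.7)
The plan is to argue by contradiction. Suppose $m$ is Lyndon with $|\text{init}(m)| \geq 2$. By the preceding corollary, $\text{init}(m)$ is a clique in $\G$, so any two of its elements commute. Let $v_\alpha = \max \text{init}(m)$ under the vertex ordering $v_1 < \cdots < v_n$, and pick $v_\beta \in \text{init}(m) \setminus \{v_\alpha\}$. Since $v_\alpha$ and $v_\beta$ commute and both are initial, $m$ admits a representative beginning with $v_\beta v_\alpha$; equivalently, writing $m = v_\alpha m^\circ$ uniquely in $M$, we have $m^\circ = v_\beta m_1$ for some $m_1 \in M$, so $v_\beta \in \text{init}(m^\circ)$.

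Consider the transposition $m^\# := m^\circ v_\alpha$, which is a conjugate of $m$. If $m^\# = m$ in $M$, then $v_\alpha$ commutes with $m^\circ$, yielding a factorization $m = v_\alpha \cdot m^\circ = m^\circ \cdot v_\alpha$ with both factors nontrivial (note $|m| \geq 2$ since $|\text{init}(m)| \geq 2$); this contradicts the primitivity of $m$. Hence $m^\# \neq m$, and since $m$ is minimal in its conjugacy class while $\text{std}$ is injective on $M$, we obtain $\text{std}(m) < \text{std}(m^\#)$ strictly. The strategy is now to derive the reverse inequality $\text{std}(m^\#) < \text{std}(m)$, completing the contradiction.

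Set $u = \text{std}(m^\circ)$, so that $\text{std}(m) = v_\alpha u$. Since $u v_\alpha$ is a representative of $m^\#$, we have $\text{std}(m^\#) \geq u v_\alpha$ lexicographically. One would like to establish (a) that $u v_\alpha$ is in fact $\text{std}(m^\#)$, shown by induction on word length verifying that no swap in $m^\#$ produces a lexicographically larger representative, and (b) that $u v_\alpha <_{\text{lex}} v_\alpha u$. For (b): if $u$ begins with $j \geq 0$ copies of $v_\alpha$ followed by a letter $v_\gamma \neq v_\alpha$, then $u v_\alpha$ and $v_\alpha u$ first differ at position $j+1$, where they carry $v_\gamma$ and $v_\alpha$ respectively, so (b) reduces to verifying $v_\gamma < v_\alpha$.

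The hard part will be ruling out the case $v_\gamma > v_\alpha$. The letter $v_\gamma$ is $\max \text{init}(\widetilde m)$, where $\widetilde m$ is $m^\circ$ with $j$ leading $v_\alpha$'s stripped; since $v_\beta \in \text{init}(\widetilde m)$, automatically $v_\gamma \geq v_\beta$. The delicate case is when some letter $v_\delta > v_\alpha$ that was blocked in $m$ only by these $v_\alpha$'s becomes initial in $\widetilde m$. Such $v_\delta$ would have to commute with $v_\beta$ (both lying in the clique $\text{init}(\widetilde m)$) yet fail to commute with $v_\alpha$; one must then use the commutation pattern among $v_\alpha$, $v_\beta$, $v_\delta$, together with the structural constraints that a $v_\beta v_\alpha v_\delta$-prefix imposes on $m$, to exhibit an explicit factorization $m = xy = yx$ with $x, y$ nontrivial, again contradicting primitivity. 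This last combinatorial step is the main technical obstacle, handled carefully in Krob and Lalonde \cite{KL}.
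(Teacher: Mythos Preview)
The paper does not give its own proof of this proposition; it simply cites \cite{KL}. So there is no in-paper argument to compare against, and your proposal is already doing more than the surrounding text. That said, the proposal itself has a genuine gap in the hard case.

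Your plan in the ``delicate case'' is to show that the structural configuration (two initial letters $v_\beta<v_\alpha$, together with some $v_\delta>v_\alpha$ appearing in $\mathrm{init}(m^\circ)$) forces a factorisation $m=xy=yx$, contradicting primitivity. This is not true as stated. Take $\G$ on vertices $a<b<c<d$ with edges $\{a,b\}$ and $\{a,c\}$ only, and let $m=abcd=bacd=bcad$. Then $\mathrm{init}(m)=\{a,b\}$, $v_\alpha=b$, $m^\circ=acd$, and $\mathrm{std}(m^\circ)=cad$ begins with $c>b$, so you are squarely in the delicate case. One checks directly that $m$ is primitive (no nontrivial $x,y$ with $xy=yx=m$), so non-primitivity cannot be extracted from the commutation pattern alone. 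The element $m$ fails to be Lyndon for a different reason: the conjugate $adbc$ (obtained from $m=bc\cdot ad$) is strictly smaller. Thus in the hard case the contradiction must come from minimality via a \emph{different} conjugate, not from primitivity, and your outline does not indicate how to locate that conjugate.

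There is a second, smaller gap: your claim (a) that $\mathrm{std}(m^\#)=u\,v_\alpha$ is asserted ``by induction on word length'' but not argued; since $v_\alpha$ may commute with terminal letters of $m^\circ$ and migrate leftwards, this requires justification. In short, the easy case is fine, but the hard case needs a different mechanism than the one you sketch, and as written the proposal, like the paper, ultimately leans on \cite{KL} for the substance.
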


Given $m \in M$, we say that $v_i \in \zeta(m)$ if either $v_i \in supp(m)$ or there exists $v_j \in supp(m)$ such that $[v_i,v_j] \neq 1$. Equivalently $v_i \in \zeta(m)$ if and only if either $v_i \in supp(m)$ or $v_im \neq mv_i$. In a similar fashion to Lyndon words, there is a selection of equivalent definitions of Lyndon elements.

\begin{theorem}[\cite{KL}, Propositions 3.5, 3.6, and 3.7] \label{t:ledefinitions}
Let $m \in M$. The following are equivalent.

\begin{enumerate} 
\item $m$ is a Lyndon element.
\item For all $x,y \in M \smallsetminus \{1\}$ such that $m=xy$, one has $m<y$.
\item Either $|m|=1$ or there exist Lyndon elements $x,y$ such that $x<y$, $init(y) \in \zeta(x)$ and $m=xy$.
\item $std(m)$ is a Lyndon word.
\end{enumerate}
\end{theorem}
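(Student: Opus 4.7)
The plan is to exploit the standard representative map $std:M \to W(V)$ as a bridge, reducing the four conditions on $m \in M$ to analogous statements about $std(m) \in W(V)$, where Theorem \ref{t:lwdefinitions} applies. The central intermediate claim is the equivalence $(1) \Leftrightarrow (4)$; once this is established, the remaining equivalences follow by translation using properties of $std$ and Lemma \ref{l:ordering}.

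For $(1) \Rightarrow (4)$ I would verify primitivity and conjugacy-minimality of $std(m)$ in $W(V)$. Primitivity transfers directly through the projection $\pi:W(V) \to M$: a nontrivial equality $std(m)=uv=vu$ with $u, v$ nonempty would force $m=\pi(u)\pi(v)=\pi(v)\pi(u)$, contradicting primitivity of $m$ in $M$. For minimality in the $W(V)$-conjugacy class, given a cyclic rotation $std(m)=uv$, the word $vu$ represents the transposition $m'=\pi(v)\pi(u)$ of $m$ in $M$, and since $m \leq m'$ one has $std(m) \leq std(m')$; one then argues $std(m) \leq vu$ using Lemma \ref{l:ordering} together with the fact that $vu$ and $std(m')$ have the same length and multidegree and differ by swaps. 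The direction $(4) \Rightarrow (1)$ is the symmetric argument.

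Once $(1) \Leftrightarrow (4)$ is in hand, conditions $(2)$ and $(3)$ follow by translation. For $(4) \Rightarrow (2)$, Theorem \ref{t:lwdefinitions}(2) says every nontrivial suffix $w'$ of the Lyndon word $std(m)$ satisfies $std(m) < w'$; any factorization $m=xy$ in $M$ yields a matching factorization $std(m)=ww'$ with $\pi(w')=y$ (using $std(m) \geq std(x)std(y)$ and swap moves), so $m < y$ follows from the definition of the order on $M$. For $(1) \Rightarrow (3)$, the recursive decomposition $std(m)=ww'$ furnished by Theorem \ref{t:lwdefinitions}(3) translates to $m=xy$ with $x, y$ Lyndon elements and $x<y$; the condition $init(y) \in \zeta(x)$ is forced because otherwise $init(y)$ would commute with every element of $supp(x)$, permitting a swap that produces a lex-larger representative of $m$ and contradicting maximality of $std(m)$. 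The converses $(2) \Rightarrow (1)$ and $(3) \Rightarrow (1)$ close off the cycle: $(2)$ immediately forces primitivity and conjugacy-minimality of $m$, while $(3)$ is handled by induction on $|m|$, verifying primitivity and minimality from the hypotheses on the smaller Lyndon elements $x$ and $y$.

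The main obstacle is the bridging equivalence $(1) \Leftrightarrow (4)$, and specifically the minimality half. The subtlety is that conjugacy in $M$ permits transpositions at any intermediate stage of a word, whereas conjugacy in $W(V)$ is simply cyclic rotation of a single word; consequently the $M$-conjugacy class of $m$ can be strictly larger than the image under $\pi$ of the $W(V)$-conjugacy class of $std(m)$, and individual cyclic rotations $vu$ of $std(m)$ need not themselves be standard representatives of their $M$-class. The design choice of taking $std$ to be the lex-maximum representative, combined with the length-preserving comparison properties of Lemma \ref{l:ordering}, is precisely what makes the translation go through, and carrying out the bookkeeping in these comparisons is where the main effort of the proof lies.
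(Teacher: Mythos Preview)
The paper does not prove this theorem: it is quoted from \cite{KL} (Propositions~3.5--3.7) without argument, so there is no ``paper's own proof'' to compare against. Your proposal is therefore an attempt to supply what the paper deliberately outsources.

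That said, the outline you give has a concrete gap in the key step $(1)\Rightarrow(4)$, minimality direction. You write: from a cyclic rotation $std(m)=uv$ one gets a transposition $m'=\pi(v)\pi(u)$, and since $m\le m'$ one has $std(m)\le std(m')$; then you claim one argues $std(m)\le vu$ via Lemma~\ref{l:ordering} and the fact that $vu$ and $std(m')$ differ by swaps. But the swap relation only gives $vu\le std(m')$ (since $std$ is the \emph{maximum} representative), so you have $std(m)\le std(m')$ and $vu\le std(m')$, and nothing in Lemma~\ref{l:ordering} lets you compare $std(m)$ with $vu$. The inequality you need points the wrong way relative to what the definition of $std$ delivers. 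The same obstruction bites in the converse $(4)\Rightarrow(1)$: a conjugate $m'$ of $m$ in $M$ is reached by a \emph{chain} of transpositions, and a factorisation $m=xy$ in $M$ does not in general produce a factorisation $std(m)=u\cdot v$ in $W(V)$ with $\pi(u)=x$ and $\pi(v)=y$ (one only has $std(m)\ge std(x)std(y)$), so you cannot directly read off a cyclic rotation of $std(m)$ from a transposition of $m$.

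The actual proofs in \cite{KL} use finer combinatorics of standard representatives in the partially commutative monoid (in particular, a careful analysis of how $std$ interacts with left and right factors and with the $init$/$\zeta$ conditions) rather than a direct transport of Theorem~\ref{t:lwdefinitions} through $std$. Your identification of $(1)\Leftrightarrow(4)$ as the crux is correct, but the bookkeeping you allude to is not merely bookkeeping: it requires an argument you have not supplied and that Lemma~\ref{l:ordering} alone does not provide.
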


Once again, the third part of the classification gives a simple recursive process for writing down Lyndon elements. 

\begin{example}
If $\G$ is the small example graph of Figure~\ref{fig:minigraph}, then the Lyndon elements of length $\leq 3$ are: \begin{align*} &v_1,\, v_2,\, v_3 \\ &v_1v_2,\, v_1v_3 \\ &v_1v_1v_2,\, v_1v_1v_3,\, v_1v_2v_2,\, v_1v_2v_3,\, v_1v_3v_3 \end{align*}

The words given here are a subset of the set of Lyndon words on $\{v_1,v_2,v_3\}$. So for example, the Lyndon word $v_2v_3$ does not represent a Lyndon element as $v_3 \not \in \zeta(v_2)=\{v_1,v_2\}$. As with Lyndon words, the decomposition of a Lyndon element of length $>1$ as a product of two Lyndon elements is not necessarily unique. In this example $v_1v_2v_3$ has two possible decompositions as $v_1v_2.v_3$ and $v_1v_3.v_2$. 
\end{example}

\subsection{The standard factorisation of a Lyndon element}

We now give each Lyndon element a unique `bracketing'. If $m$ is a Lyndon element of length greater than 1, there may exist many pairs of Lyndon elements $x$ and $y$ such that $m=xy$. If $y$ is minimal as we run through all such pairs of Lyndon elements in $M$, we say that $S(m)=(x,y)$ is the \emph{standard factorisation} of $m$. The standard factorisation behaves well with respect to standard representatives:

\begin{theorem}[\cite{L2}, Proposition 2.1.10]
Let $a \in M$ be a Lyndon element. If $S(a)=(x,y)$ is the standard factorisation of $a$, then $std(a)=std(x)std(y)$.
\end{theorem}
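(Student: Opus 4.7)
Plan: The strategy is to exhibit the standard Lyndon factorization of the Lyndon \emph{word} $std(a)$ as the standard factorization of the Lyndon \emph{element} $a$. Since $a$ is a Lyndon element of length $>1$, Theorem \ref{t:ledefinitions}(4) says $std(a)$ is a Lyndon word of length $>1$, and Theorem \ref{t:lwdefinitions}(3) furnishes a factorization into two Lyndon words. Take the canonical one $std(a)=UW$, where $W$ is the longest proper Lyndon suffix of $std(a)$, and let $\bar U,\bar W\in M$ be the elements represented by $U$ and $W$, so that $a=\bar U\bar W$.

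I would first show $std(\bar U)=U$ and $std(\bar W)=W$. The word $U\cdot std(\bar W)$ represents $a=\bar U\bar W$, so by maximality of the standard representative $U\cdot std(\bar W)\leq std(a)=UW$. Combined with $std(\bar W)\geq W$ (since $W$ is a representative of $\bar W$), the first bullet of Lemma \ref{l:o1} forces $std(\bar W)=W$. The identity $std(\bar U)=U$ follows symmetrically from the second bullet of Lemma \ref{l:o1}, using $|std(\bar U)|=|U|$. Theorem \ref{t:ledefinitions}(4) then implies $\bar U$ and $\bar W$ are Lyndon elements, so $(\bar U,\bar W)$ is a Lyndon decomposition of $a$.

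It remains to show that $(\bar U,\bar W)$ is the \emph{standard} factorization, i.e.\ $\bar W\leq y'$ for every Lyndon decomposition $a=x'y'$. Given such a pair, the word $std(x')\cdot std(y')$ represents $a$ and so lies below $std(a)=UW$ in lex order, while $std(y')$ is a Lyndon word by Theorem \ref{t:ledefinitions}(4). I would then invoke the classical fact that the longest proper Lyndon suffix of a Lyndon word is simultaneously its lex-smallest proper Lyndon suffix, in order to conclude $std(y')\geq W$, hence $y'\geq\bar W$ in $M$. Combined with the defining minimality $y\leq \bar W$ for $S(a)=(x,y)$, this forces $y=\bar W$, whence $x=\bar U$ by cancellativity of $M$, and therefore $std(x)std(y)=UW=std(a)$.

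The main obstacle is the comparison $std(y')\geq W$. Because commutation swaps can shuffle letters across the boundary between $std(x')$ and $std(y')$ when rewriting $std(x')std(y')$ into $std(a)$, $std(y')$ need not appear as an honest suffix of $std(a)$, so one cannot simply read the comparison off from the factorization $UW$. Controlling the admissible boundary swaps requires using that $init(y')$ is a single vertex (the Krob--Lalonde result quoted above), that $y'$ is primitive and minimal in its conjugacy class, and a careful tracking of which letters of $std(y')$ can migrate leftward into the $std(x')$ block during the rewriting. This combinatorial bookkeeping is the core of Lalonde's original argument in \cite{L2}.
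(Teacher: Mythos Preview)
The paper does not prove this statement; it is quoted from Lalonde \cite{L2} and used as a black box, so there is no in-paper argument to compare your proposal against.

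As for the proposal on its own merits: the first half is correct. Your derivation of $std(\bar U)=U$ and $std(\bar W)=W$ from the maximality of $std(a)$ together with Lemma~\ref{l:o1} is sound, and it does establish that $(\bar U,\bar W)$ is a Lyndon factorization of $a$ with $std(\bar U)\,std(\bar W)=std(a)$. The second half, however, is by your own admission incomplete. To conclude that $(\bar U,\bar W)$ is the \emph{standard} factorization you need $\bar W\le y'$ for every Lyndon factorization $a=x'y'$, and the classical word-level fact you invoke (longest proper Lyndon suffix $=$ lex-smallest proper Lyndon suffix) only compares $W$ with genuine suffixes of $std(a)$. Since $std(y')$ need not be such a suffix, the desired inequality $std(y')\ge W$ does not follow from it. You then defer this step to ``Lalonde's original argument'', which is honest but leaves the one substantive difficulty of the theorem unproved. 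What you have written is a correct reduction to the hard case plus a pointer to \cite{L2}; that is effectively what the paper itself does, but it is not a self-contained proof.
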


Note that if $x,y$ are two Lyndon elements of $M$ with $x<y$ then $std(x)$, $std(y)$ and $std(x)std(y)$ are Lyndon words. As each Lyndon word is strictly less than its nontrivial conjugates, \begin{equation}\label{e:lyndon} std(x)std(y) < std(y)std(x).\end{equation} We shall use this trick repeatedly in the work that follows. There is one final combinatorial fact we need before we can move on:

\begin{theorem}[\cite{L2}, Proposition 2.3.9]\label{t:standard}
Suppose that $a$ and $b$ are Lyndon elements with $a<b$ and $init(b)\in \zeta(a)$ (in particular, $ab$ is a Lyndon element). Then $S(ab)=(a,b)$ if and only if $|a|=1$ or $S(a)=(x,y)$ and $y \geq b.$
\end{theorem}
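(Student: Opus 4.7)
The plan is to prove the two implications separately. Throughout, the key combinatorial fact is that if $u<v$ are Lyndon elements whose product $uv$ is also Lyndon, then $u<uv<v$: the second inequality follows because $std(uv)=std(u)std(v)$ is a Lyndon word, and a Lyndon word is strictly less than each of its nontrivial proper suffixes by Theorem~\ref{t:lwdefinitions}(2), including $std(v)$.

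For the forward direction, suppose $S(ab)=(a,b)$ and $|a|>1$, and write $S(a)=(x,y)$. Assume for contradiction that $y<b$. The strategy is to exhibit a Lyndon decomposition of $ab$ whose second factor is strictly smaller than $b$, contradicting the minimality of $b$ in $S(ab)$. If $init(b)\in\zeta(y)$, then Theorem~\ref{t:ledefinitions}(3) gives that $yb$ is Lyndon; the decomposition $ab=x\cdot(yb)$ then has second factor $yb<b$ by the remark above, yielding the contradiction. The remaining subcase is $init(b)\in\zeta(x)\smallsetminus\zeta(y)$, in which $init(b)$ commutes with every letter of $y$ and lies outside $supp(y)$. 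Here one rewrites $ab=x\cdot init(b)\cdot y\cdot b'$, where $b=init(b)\cdot b'$, and produces a different Lyndon decomposition with strictly smaller second factor by analogous reasoning, using the Lyndon property of $x\cdot init(b)$ obtained from $init(b)\in\zeta(x)$.

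For the reverse direction, suppose either $|a|=1$ or $S(a)=(x,y)$ with $y\geq b$. Let $ab=cd$ be any other Lyndon decomposition with $c<d$ both Lyndon; the goal is to prove $d\geq b$. Partition by the length $|c|$ relative to $|a|$. If $|c|=|a|$, the uniqueness of standard representatives of $ab$ — combined with $std(ab)=std(a)std(b)=std(c)std(d)$ from the cited Proposition~2.1.10 of \cite{L2} — forces $c=a$ and hence $d=b$. If $|c|<|a|$, then $std(c)$ is a proper prefix of $std(a)$, and Theorem~\ref{t:lwdefinitions}(2) applied to $std(a)$ shows the remaining suffix concatenated with $std(b)$ exceeds $std(b)$, giving $d>b$. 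If $|c|>|a|$, the Lyndon element $c$ extends strictly into $b$; writing $c=ae$ with $e$ nontrivial so that $ed=b$, one combines the Lyndon property of $c=ae$ with the minimality of $y$ in $S(a)$ to conclude that $e$ is bounded in a way forcing $d\geq b$.

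The main obstacle is the forward direction's second subcase (when $init(b)$ commutes with all of $y$), together with the reverse direction's case $|c|>|a|$: both arise only in the partially commutative setting, since in the classical free-group case the commutation required to make them nontrivial simply cannot occur. Exploiting how $\zeta$ interacts with standard factorisations is the essential new ingredient, and is where Lalonde's original argument for Proposition~2.3.9 in \cite{L2} does the heavy lifting over the classical Lyndon word theory in \cite{Lothaire}.
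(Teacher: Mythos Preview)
The paper does not give a proof of this statement at all: Theorem~\ref{t:standard} is quoted without proof from Lalonde~\cite{L2}, Proposition~2.3.9. There is therefore nothing in the paper to compare your argument against.

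As a standalone proof, your proposal is really an outline of the classical free-monoid argument with the partially-commutative difficulties explicitly deferred back to~\cite{L2}. Concretely, two of the steps you write down do not go through in $M$ as stated. In the reverse direction, case $|c|=|a|$, you invoke ``$std(ab)=std(a)std(b)=std(c)std(d)$'' via the theorem immediately preceding~\ref{t:standard}; but that theorem only asserts $std(m)=std(x)std(y)$ for the \emph{standard} factorisation $S(m)=(x,y)$, which is exactly what you are trying to establish for $(a,b)$, so the appeal is circular, and for the arbitrary decomposition $(c,d)$ it is simply unavailable. In the cases $|c|<|a|$ and $|c|>|a|$ you assume that $c$ is a left divisor of $a$ (resp.\ that $a$ is a left divisor of $c$): in a free monoid $ab=cd$ with $|c|<|a|$ forces $a=ce$ for some $e$, but in a free partially commutative monoid this prefix/left-divisor dichotomy fails because letters of $b$ may commute past letters of $a$. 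The same issue undermines your forward-direction Subcase~2, where the rewrite $ab=x\cdot init(b)\cdot y\cdot b'$ is fine, but the ``analogous reasoning'' you allude to would need a Lyndon decomposition of $ab$ with a smaller second factor, and neither $x\cdot init(b)$ nor $y\cdot b'$ is obviously Lyndon without further argument (for instance you have not checked $x<init(b)$).

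In short, you have correctly identified where the difficulties lie and have given the free-monoid skeleton, but the proposal is not a proof: the cases you flag as ``where Lalonde's original argument does the heavy lifting'' are precisely the content of the theorem, and the remaining cases as written rely on free-monoid cancellation/prefix facts that are false in $M$.
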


\begin{example}\label{e:bracket}We now have a recursive way of giving a bracketing to any Lyndon element. Given $m \in M$, take its standard factorisation $S(m)=(a,b)$, and define the bracketing on $m$ to be equal to $[[a],[b]]$, where $[\_]$ denotes the bracketing on $a$ and $b$ respectively. In our small example graph, the only interesting case is $std(v_1v_2v_3)=v_1v_3v_2=std(v_1v_3)std(v_2)$. We then obtain the following bracketing on Lyndon elements of length 3: $$[v_1,[v_1,v_2]], \, [v_1,[v_1,v_3]], \, [[v_1,v_2],v_2], \, [[v_1,v_3],v_2], \, [[v_1,v_3],v_3].$$
\end{example}
\subsection{A basis theorem for the algebra $L_\G$}

Let $LE(M)$ be the set of Lyndon elements of $M$. Let $\mathbb{Z}[LE(M)]$ be the free $\mathbb{Z}$--module with basis $LE(M).$

\begin{definition} \label{d:phi} Let $L$ be a Lie algebra, and let $X=\{x_1,x_2,\ldots,x_n\}$ be a subset of $L$. Let $\phi_X:\mathbb{Z}[LE(M)] \to L$ be the $\mathbb{Z}$--module homomorphism defined recursively as follows:
\begin{align*} \phi_X(\mathbf{v_i})&=x_i &&\text{for all $i$}\\ \phi_X(a)&=[\phi_X(x),\phi_X(y)] && \text{if $|a|>1$ and $S(a)=(x,y)$.} \end{align*}
\end{definition} 

\begin{example} Let $L_\G$ be the Lie subalgebra of $\mathcal{L}(U)$ generated by the set $V=\{\mathbf{v_1},\ldots,\mathbf{v_n}\}$. We attain a $\mathbb{Z}$--module homomorphism $\phi_V:\mathbb{Z}[LE(M)] \to L_\G$. We write $\phi_V=\ell$. The map $\ell$ can be thought of as the bracketing procedure for Lyndon elements described above. \end{example}

The following technical lemma gives us a way of understanding the bracket operation in $\mathcal{L}(U)$.

\begin{lemma}\label{l:mult}
Suppose that $f=\sum_{b \in I} \alpha_b b$ and $g=\sum_{c \in J} \beta_c c$ are two homogeneous elements in $U^{\infty}$, so that $|b|=|b'|$ for all $b,b' \in I$ and $|c|=|c'|$ for all $c \in J$. Let $x$ be the minimal element in $I$ with $\alpha_x$ nonzero and $y$ be the minimal element in $J$ with $\beta_y$ nonzero. Suppose that $x$ and $y$ are Lyndon elements, $x<y$ and $init(y) \in \zeta(x)$, so that $xy$ is a Lyndon element. Then \begin{itemize} \item $[f,g]$ is a homogeneous element of $U^{\infty}$ of degree $|xy|$; \item $xy$ is the minimal element of $M$ with nonzero coefficient in $[f,g]$; \item The coefficient of $xy$ in $[f,g]$ is $\alpha_x\beta_y$. \end{itemize}
Furthermore, if $f$ and $g$ are homogeneous with respect to multidegree, so that $\|b\|=\|b'\|$ for all $b,b' \in I$ and $\|c\|=\|c'\|$ for all $c,c' \in J$, then $[f,g]$ is homogeneous with respect to the multidegree $\|xy\|$.
\end{lemma}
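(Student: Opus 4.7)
The plan is to expand $[f,g]=fg-gf$ termwise and isolate the coefficient of each $m\in M$. Both homogeneity assertions are immediate: multiplication in $U^\infty$ respects both length and multidegree, so every monomial $bc$ or $cb$ has length $|x|+|y|=|xy|$ and, in the refined setting, multidegree $\|x\|+\|y\|=\|xy\|$. Expanding, the coefficient of $m\in M$ of length $|xy|$ in $[f,g]$ is
\[
\gamma_m \;=\; \sum_{\substack{b\in I,\,c\in J\\ bc=m}}\alpha_b\beta_c \;-\; \sum_{\substack{b\in I,\,c\in J\\ cb=m}}\alpha_b\beta_c,
\]
with the equalities $bc=m$ and $cb=m$ taken in $M$, and the minimality of $x$ in $I$ and $y$ in $J$ forces $b\geq x$ and $c\geq y$ for any contributing pair.

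The key estimates come from pairing Lemma~\ref{l:ordering} with the Lyndon-word inequality~\eqref{e:lyndon}. For $b\geq x$ and $c\geq y$ of matching lengths, Lemma~\ref{l:ordering} yields
\[
std(bc) \;\geq\; std(b)std(c) \;\geq\; std(x)std(y), \qquad std(cb) \;\geq\; std(c)std(b) \;\geq\; std(y)std(x),
\]
each middle inequality becoming strict as soon as $b>x$ or $c>y$. Since $x,y$ are Lyndon elements with $x<y$, the word $std(x)std(y)$ is itself a Lyndon word, so \eqref{e:lyndon} gives the strict $std(x)std(y)<std(y)std(x)$. The target is to promote these word-level inequalities to the $M$-level statements $bc\geq xy$ (with equality forcing $(b,c)=(x,y)$) and $cb>xy$ for every contributing pair. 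Granting this, $\gamma_m=0$ for every $m<xy$ (both sums are empty) and $\gamma_{xy}=\alpha_x\beta_y$, contributed solely by the unique pair $(x,y)$ through the first sum. In the pleasant case $std(xy)=std(x)std(y)$ the chain $std(x)std(y)\leq std(b)std(c)\leq std(bc)$ collapses whenever $bc=xy$, immediately forcing $std(b)=std(x)$ and $std(c)=std(y)$ and hence $(b,c)=(x,y)$.

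The main obstacle is the case $std(xy)>std(x)std(y)$, which genuinely occurs when commuting letters migrate across the $std(x)$/$std(y)$ boundary and enlarge the std form of $xy$. In this case one must separately rule out ``intermediate'' $M$-elements with std form in the open interval between $std(x)std(y)$ and $std(xy)$ being realised as a $bc$ or $cb$ by a contributing pair distinct from $(x,y)$. The route I would take is a combinatorial rigidity argument for factorisations of elements of $M$ at or below $xy$: a factorisation $m=bc$ with $|b|=|x|$, $|c|=|y|$, $m\leq xy$ and $b>x$ is forced to require $c<y$, contradicting minimality, and similarly for $cb$; the hypothesis $init(y)\in\zeta(x)$ is exactly what makes this rigidity available via Theorem~\ref{t:ledefinitions}, while the strict Lyndon inequality $std(x)std(y)<std(y)std(x)$ ensures that no contributing $cb$ can meet or fall below $xy$.
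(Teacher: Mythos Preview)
Your approach is the paper's approach: expand $[f,g]$ bilinearly as $\sum\alpha_b\beta_c(bc-cb)$ and bound each $std(bc)$, $std(cb)$ from below using Lemma~\ref{l:ordering} together with the Lyndon inequality~\eqref{e:lyndon}. Where you diverge is in worrying about the case $std(xy)>std(x)std(y)$. The paper does not worry: at the crucial step it simply writes $std(x)std(y)=std(a)$ (with $a=xy$) and concludes $bc>xy$, $cb>xy$ directly from the word-level inequalities.

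Your concern is legitimate for the lemma as literally stated. In the small example graph, $x=\mathbf{v_1v_2}$ and $y=\mathbf{v_3}$ satisfy all the hypotheses, yet $std(xy)=\mathbf{v_1v_3v_2}>\mathbf{v_1v_2v_3}=std(x)std(y)$; so the equality the paper invokes is not a consequence of the stated hypotheses alone. What rescues the paper is context: the lemma is applied only in Proposition~\ref{p:x}, where $(x,y)=S(a)$ is the \emph{standard} factorisation of $a=xy$, and then $std(a)=std(x)std(y)$ is exactly Lalonde's theorem quoted immediately before the lemma. In effect the paper's proof carries the tacit extra hypothesis $std(xy)=std(x)std(y)$, which is available whenever it is used.

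So the cleanest fix is not your proposed rigidity argument (which, as sketched, still needs a genuine proof that $bc\le xy$ with $b>x$ forces $c<y$ in $M$, and that is not obvious) but simply to observe that in the intended application one has $std(xy)=std(x)std(y)$, after which your ``pleasant case'' paragraph is the whole proof and coincides with the paper's.
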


\begin{proof}
We have: \begin{equation}\label{e:prod} [f,g]=\sum_{b \in I}\sum_{c \in J}\alpha_b\beta_c (bc -cb), \end{equation} where we may assume that $b \geq x$ and $c \geq y$, and $|bc|=|cb|=|xy|$. If either $b>x$ or $c>y$ then by Lemma~\ref{l:ordering}: \begin{align*} std(bc) &\geq std(b)std(c) \\ &> std(x)std(y) \\ &=std(a).\end{align*} By the identities in Lemma \ref{l:ordering} and the identity \eqref{e:lyndon} we also have: \begin{align*} std(cb) &\geq std(c)std(b) \\ &\geq std(y)std(x)\\ &> std(x)std(y) \\ &=std(a). \end{align*} Hence $cb > xy$ for all $b \in I, c \in J$ and $bc \geq xy$ with equality if and only if $b=x$ and $c=y$, so the coefficient of $a$ in the above sum is $\alpha_x\beta_y$. The final remark about homogeneity with respect to multidegree follows as if $f$ and $g$ are homogeneous with respect to multidegree then $\|bc\|=\|cb\|=\|xy\|$ for all $b\in I$ and $c\in J$. \end{proof}

\begin{proposition}\label{p:x}
For each $a \in LE(M)$, there exists a subset $I \subset M$ and a set of nonzero integers $\{\alpha_b\}_{b \in I}$ indexed by $I$ such that $$\ell(a) =\sum_{b \in I} \alpha_b b.$$ Furthermore, $a \in I$ with $\alpha_a=1$, and for all $b \in I$ we have $\|b\|=\|a\|$ and $b \geq a$. \end{proposition}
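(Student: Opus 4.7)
The plan is to prove this by induction on the length $|a|$ of the Lyndon element, using the standard factorisation to match the recursive definition of $\ell$ and then invoking Lemma~\ref{l:mult} to extract the leading term of each commutator.

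For the base case $|a|=1$, we have $a=\mathbf{v_i}$ for some $i$ and $\ell(a)=\mathbf{v_i}$, so $I=\{a\}$ with $\alpha_a=1$ works trivially; the multidegree and minimality conditions are immediate.

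For the inductive step, suppose $|a|>1$ and let $S(a)=(x,y)$, so by Theorem~\ref{t:ledefinitions}(3) both $x$ and $y$ are Lyndon elements with $x<y$, $init(y)\in\zeta(x)$, and $a=xy$. By Definition~\ref{d:phi}, $\ell(a)=[\ell(x),\ell(y)]$. Applying the inductive hypothesis, I may write $\ell(x)=\sum_{b\in I_x}\alpha_b b$ and $\ell(y)=\sum_{c\in I_y}\beta_c c$, where $x\in I_x$ with $\alpha_x=1$ and all $b\in I_x$ satisfy $\|b\|=\|x\|$ and $b\geq x$ (similarly for $y$). Because multidegree determines length, $\ell(x)$ and $\ell(y)$ are homogeneous elements of $U^\infty$, and $x$, respectively $y$, is the minimum element of $I_x$, respectively $I_y$, with nonzero coefficient. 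The hypotheses of Lemma~\ref{l:mult} are therefore all met, and the lemma yields that $[\ell(x),\ell(y)]$ is homogeneous of multidegree $\|xy\|=\|a\|$, that $a=xy$ appears in its expansion with coefficient $\alpha_x\beta_y=1$, and that every other $b$ with nonzero coefficient satisfies $b>a$. Setting $I$ to be the set of elements of $M$ appearing with nonzero coefficient in this expansion gives the required description of $\ell(a)$.

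The only genuine subtlety is verifying that the hypotheses of Lemma~\ref{l:mult} apply with $x$, $y$ being precisely the minimal elements of $I_x$, $I_y$: this needs the inductive statements $b\geq x$ for $b\in I_x$ and $c\geq y$ for $c\in I_y$, together with the fact that $xy$ is a Lyndon element guaranteed by the definition of the standard factorisation via Theorem~\ref{t:ledefinitions}(3). Everything else is a direct bookkeeping of what Lemma~\ref{l:mult} produces.
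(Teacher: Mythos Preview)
Your proof is correct and follows essentially the same approach as the paper: induction on $|a|$, using the standard factorisation $S(a)=(x,y)$ in the inductive step and then applying Lemma~\ref{l:mult} to $\ell(a)=[\ell(x),\ell(y)]$ to extract the leading term and the multidegree homogeneity. Your write-up is slightly more explicit than the paper's about verifying the hypotheses of Lemma~\ref{l:mult}, but the argument is the same.
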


\begin{proof}
We proceed by induction on $|a|$. If $|a|=1$ then $\ell(a)=a$ and we are done. Suppose that $|a|> 1$. Let $S(a)=(x,y)$ be the standard decomposition of $a$. By our inductive hypothesis we may write $$\ell(x)=\sum_{b \in I}\alpha_bb \text{ and } \ell(y)=\sum_{c\in J}\beta_cc$$ with $b \geq x$, $c \geq y$ and $\|b\|=\|x\|$, $\|c\|=\|y\|$ for all $b \in I$ and $c \in J$. Furthermore we may assume  $\alpha_x=\beta_y=1$. As $\ell(a)=[\ell(x),\ell(y)]$ the result follows from Lemma~\ref{l:mult}.  \end{proof}

A consequence of the above theorem is that the image of $LE(M)$ under $\ell$ forms a linearly independent set.

\begin{corollary}\label{c:inj}
The map $\ell:\mathbb{Z}[LE(M)] \to L_\G$ is injective. \end{corollary}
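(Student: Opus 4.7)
The plan is to run a standard leading-term argument, using the triangularity information that Proposition \ref{p:x} gives for $\ell(a)$ expanded in the basis $M$ of $U$. The key structural fact is that $\ell(a)$ expresses $a$ itself (with coefficient $1$) plus a $\mathbb{Z}$-linear combination of elements of $M$ strictly greater than $a$ in the total order on $M$; a minimal element in any alleged linear dependence therefore cannot be cancelled by contributions from the other basis vectors.

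To carry this out, suppose for contradiction that $f = \sum_{i=1}^{k} c_i a_i$ is a nonzero element of $\ker(\ell)$, with the $a_i \in LE(M)$ pairwise distinct and each $c_i \in \mathbb{Z}$ nonzero. Since $\{a_1,\ldots,a_k\}$ is finite and $M$ is totally ordered, let $a_j$ denote the minimum among the $a_i$. By Proposition \ref{p:x}, for every $i$ we may write
$$\ell(a_i) = \sum_{b \in I_i} \alpha_b^{(i)} b,$$
with $a_i \in I_i$, $\alpha_{a_i}^{(i)} = 1$, and $b \geq a_i$ for every $b \in I_i$. For $i \neq j$ we have $a_i > a_j$, so every element in the support of $\ell(a_i)$ is strictly greater than $a_j$; in particular, $a_j$ does not appear in $\ell(a_i)$. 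Reading off the coefficient of $a_j$ on both sides of $0 = \ell(f) = \sum_i c_i \ell(a_i)$ therefore yields $c_j \cdot 1 = 0$, contradicting $c_j \neq 0$.

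There is no real obstacle: Proposition \ref{p:x} does the heavy lifting, and what remains is a one-line triangularity observation. In matrix language, if we list the elements of $M$ in increasing order and index the columns by $LE(M) \subset M$, the matrix of $\ell$ is upper-unitriangular, so $\ell$ is injective over $\mathbb{Z}$. The only small subtlety worth stating explicitly is that taking the minimum of $\{a_1,\ldots,a_k\}$ makes sense because the order on $M$ defined via standard representatives is a genuine total order.
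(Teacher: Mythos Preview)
Your argument is correct and is precisely the triangularity/leading-term argument the paper has in mind: the paper simply remarks that Proposition~\ref{p:x} forces the image of $LE(M)$ under $\ell$ to be linearly independent, and you have spelled out that deduction in detail. There is nothing to add.
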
 

We now go back to the more general situation.

\begin{lemma}\label{l:zero}
Let $L$ be a Lie algebra, and suppose that $X=\{x_1,\ldots,x_n\}$ is a subset of $L$ that satisfies $$[x_i,x_j]=0 \text{ when } [v_i,v_j]=1.$$ Suppose that $a$ is a Lyndon element of $M$, and $v_i \in V$ such that $[\mathbf{v_i},a]=0$ in $U$. If $\phi_X$ is defined as in Definition~\ref{d:phi}, then $$[\phi_X(a),\phi_X(\mathbf{v_i})]=0.$$ \end{lemma}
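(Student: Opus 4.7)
The plan is to proceed by induction on $|a|$, with the bracket decomposition coming from the standard factorisation of the Lyndon element.

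First I would unpack the hypothesis $[\mathbf{v_i},a]=0$ in $U$. Since $\mathbf{v_i}a$ and $a\mathbf{v_i}$ both lie in the basis $M$ of $U$, this equation forces them to be equal in $M$, so $v_i$ commutes with $a$ inside $A_\G$. Equivalently, $v_i$ commutes with every $v_j \in \mathrm{supp}(a)$, i.e.\ $[v_i,v_j]=1$ in $\G$ for each such $v_j$. This characterisation is the key lever, because it passes to any submonoid factorisation: if $a=xy$ in $M$, then $\mathrm{supp}(x),\mathrm{supp}(y)\subset \mathrm{supp}(a)$, so $\mathbf{v_i}$ also commutes with $x$ and with $y$ in $U$.

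For the base case $|a|=1$, the element $a=\mathbf{v_j}$ for some $j$, and the hypothesis reads $\mathbf{v_i}\mathbf{v_j}=\mathbf{v_j}\mathbf{v_i}$ in $M$, which is either the case $i=j$ or the relation $[v_i,v_j]=1$ in $\G$. In either situation the assumption on $X$ gives $[x_i,x_j]=0$, and $\phi_X(a)=x_j$, $\phi_X(\mathbf{v_i})=x_i$, so $[\phi_X(a),\phi_X(\mathbf{v_i})]=0$ as required.

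For the inductive step, let $S(a)=(x,y)$ be the standard factorisation, so that $a=xy$ in $M$ with $x,y$ Lyndon elements of shorter length. By Definition~\ref{d:phi} we have $\phi_X(a)=[\phi_X(x),\phi_X(y)]$, and the observation above gives $[\mathbf{v_i},x]=[\mathbf{v_i},y]=0$ in $U$. The inductive hypothesis, applied separately to $x$ and to $y$, then yields
\begin{equation*}
[\phi_X(x),\phi_X(\mathbf{v_i})]=0 \qquad \text{and} \qquad [\phi_X(y),\phi_X(\mathbf{v_i})]=0.
\end{equation*}
Applying the Jacobi identity in $L$ to $[[\phi_X(x),\phi_X(y)],\phi_X(\mathbf{v_i})]$ expresses $[\phi_X(a),\phi_X(\mathbf{v_i})]$ as a sum of two brackets, each of which contains one of these two vanishing commutators as an inner bracket. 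Hence $[\phi_X(a),\phi_X(\mathbf{v_i})]=0$, completing the induction.

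There is no real obstacle here: the only point that requires attention is verifying that $[\mathbf{v_i},a]=0$ in $U$ really is equivalent to $v_i$ commuting with every letter of $\mathrm{supp}(a)$, so that the hypothesis descends to the factors $x$ and $y$ appearing in the standard factorisation. Once that linguistic translation is in hand, the induction runs mechanically on the recursive structure of Lyndon elements and their bracketing.
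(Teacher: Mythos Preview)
Your proof is correct and follows essentially the same route as the paper: induction on $|a|$, base case from the hypothesis on $X$, inductive step via the standard factorisation $S(a)=(x,y)$ together with the Jacobi identity. You give a slightly fuller justification than the paper does for why the hypothesis $[\mathbf{v_i},a]=0$ passes to the factors $x$ and $y$ (via $\mathrm{supp}$), which is a welcome clarification.
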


\begin{proof}
We induct on the length of $a$. If $a=\mathbf{v_j}$ for some $j$ then $[v_i,v_j]=1$. Therefore $[\phi_X(a),\phi_X(v_i)]=[x_j,x_i]=0$. If $|a|>1$ then $S(a)=(x,y)$ for some $x,y \in LE(M)$ such that $[x,\mathbf{v_j}]=[y,\mathbf{v_j}]=0$. Therefore by induction $[\phi_X(\mathbf{v_i}),\phi_X(x)]=[\phi_X(y),\phi_X(\mathbf{v_i})]=0$, and by the Jacobi identity in $L$: \begin{align*} [\phi_X(a),\phi_X(\mathbf{v_i})]&=[[\phi_X(x),\phi_X(y)],\phi_X(\mathbf{v_i})] \\ &=-[[\phi_X(\mathbf{v_i}),\phi_X(x)],\phi_X(y)]-[[\phi_X(y),\phi_X(\mathbf{v_i})],\phi_X(x)] \\&=-[0,\phi_X(y)]-[0,\phi_X(x)] \\ &=0. \qedhere \end{align*}\end{proof}

What follows is the main technical theorem of this section, which will allow us to extend the $\mathbb{Z}$--module homomorphism $\phi_X$ to something that behaves well with respect to brackets also.

\begin{proposition} \label{p:key}
Let $L$ be a Lie algebra, and suppose that $X=\{x_1,\ldots,x_n\}$ is a subset of $L$ that satisfies $$[x_i,x_j]=0 \text{ if } [v_i,v_j]=1.$$ Let $\phi_X$ be the homomorphism defined in Definition \ref{d:phi}. Let $a,b \in LE(M)$ be such that $a<b$. Then there exists a subset $I_{a,b} \subset LE(M)$ and a set of integers $\{\alpha_c\}_{c \in I_{a,b}}$ indexed by $I_{a,b}$ such that $$[\phi_X(a),\phi_X(b)]=\sum_{c \in I_{a,b}} \alpha_c \phi_X(c).$$ Furthermore, each $c \in I_{a,b}$ satisfies the following: \begin{enumerate}[(B1)] \item $c <b$, \item $std(c)\geq std(a)std(b)$, \item $\|c\|=\|ab\|$, \end{enumerate} and the sets $I_{a,b}$ and $\{\alpha_c\}_{c \in I_{a,b}}$ are independent of $L$ and $X$.\end{proposition}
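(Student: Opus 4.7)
The plan is to prove Proposition~\ref{p:key} by strong induction on a well-founded ordering of pairs $(a,b)$ of Lyndon elements with $a<b$. Universality of the decomposition (independence from $L$ and $X$) is automatic from the inductive construction, since every rewriting step uses only the Jacobi identity, antisymmetry of the bracket, and the relation $[x_i,x_j]=0$ when $[v_i,v_j]=1$ (packaged by Lemma~\ref{l:zero}). The base case $|ab|=2$ is direct: if $[v_i,v_j]=1$ set $I_{a,b}=\emptyset$; otherwise $ab$ is itself a Lyndon element with $S(ab)=(a,b)$, and $I_{a,b}=\{ab\}$, $\alpha_{ab}=1$ satisfies (B1)--(B3).

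For the inductive step I would split into three cases. \textbf{Case 1:} $init(b)\in\zeta(a)$ and $S(ab)=(a,b)$. Then $\phi_X(ab)=[\phi_X(a),\phi_X(b)]$ by Definition~\ref{d:phi}, and $I_{a,b}=\{ab\}$ works. \textbf{Case 2:} $init(b)\in\zeta(a)$ but $S(ab)\neq(a,b)$. Theorem~\ref{t:standard} forces $|a|>1$ and $S(a)=(x,y)$ with $y<b$. Apply Jacobi:
\[ [\phi_X(a),\phi_X(b)]=[[\phi_X(x),\phi_X(b)],\phi_X(y)]+[\phi_X(x),[\phi_X(y),\phi_X(b)]]. \]
Both inner brackets have strictly smaller total length, so induction expands them as $\sum\alpha_c\phi_X(c)$ and $\sum\beta_d\phi_X(d)$ with $c,d<b$. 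The resulting outer brackets $[\phi_X(c),\phi_X(y)]$ and $[\phi_X(x),\phi_X(d)]$ still have total length $|ab|$, but since $c,d,y<b$ and $x<a<b$, each such pair has both elements strictly less than $b$, so the secondary induction parameter (the larger element) strictly decreases. \textbf{Case 3:} $init(b)\notin\zeta(a)$. Then $v_i:=init(b)$ commutes with $a$ and $[\phi_X(a),\phi_X(\mathbf{v_i})]=0$ by Lemma~\ref{l:zero}. If $b=\mathbf{v_i}$, take $I_{a,b}=\emptyset$; otherwise $S(b)=(p,q)$ with $init(p)=v_i$, and one commutes $\phi_X(a)$ past the leftmost leaf $\phi_X(\mathbf{v_i})$ of $\phi_X(b)$ by iterated Jacobi, annihilating the factor $[\phi_X(a),\phi_X(\mathbf{v_i})]$ whenever it arises.

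The conditions (B1)--(B3) are preserved throughout using Lemma~\ref{l:ordering}, the Lyndon-word inequality $std(y)std(b)<std(b)std(y)$ from display~\eqref{e:lyndon}, and additivity of multidegree under multiplication in $M$. In particular, (B3) is automatic because the bracket on $U$ preserves homogeneity in multidegree; (B2) follows from combining $std(c)\ge std(a)std(p)$ (or similar) produced by induction with the Lyndon swap inequality; (B1) is exactly where the secondary well-ordering is used.

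The main obstacle lies in Case~3 when $|p|>1$. A single application of Jacobi to $[\phi_X(a),\phi_X(b)]=[[\phi_X(a),\phi_X(p)],\phi_X(q)]+[\phi_X(p),[\phi_X(a),\phi_X(q)]]$ produces brackets $[\phi_X(c),\phi_X(q)]$ where $q>b$, since $q$ is a proper Lyndon suffix of the Lyndon element $b$. A naive secondary induction on $b\in M$ then stalls: the recursive hypothesis for the pair $(c,q)$ produces terms $c'<q$, not the required $c'<b$. The resolution is to iterate Jacobi deeper into the bracketing tree of $\phi_X(b)$ to physically expose the leftmost leaf $\phi_X(\mathbf{v_i})$ as an outermost factor (using $[\phi_X(a),\phi_X(\mathbf{v_i})]=0$ to eliminate it) before invoking the inductive hypothesis on the remaining sub-brackets. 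Equivalently, one can strengthen the induction parameter to a more refined well-order that tracks both $b$ and the depth of descent into $b$'s factorization tree. Either approach requires careful combinatorial bookkeeping on the standard representatives of the Lyndon elements involved, and this is the technical crux.
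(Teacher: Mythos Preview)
Your overall architecture—induction on a well-founded order of pairs, case split on whether $init(b)\in\zeta(a)$, Jacobi rewriting—matches the paper's proof, and your Cases~1--2 correspond to the paper's Case~1 and its subcases. The gap is exactly where you locate it, in your Case~3: ordering pairs by $(|ab|,\max\{a,b\})$ fails because Jacobi on $S(b)=(p,q)$ produces the right factor $q>b$. You acknowledge this but do not carry out either of your proposed repairs, and neither of them is what the paper does.

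The paper's resolution is a different secondary invariant: define $(a,b)\prec(a',b')$ if $|ab|<|a'b'|$; or lengths agree and $std(a)std(b)>std(a')std(b')$ (note the reversed direction); or these coincide and $b<b'$. With this order a \emph{single} Jacobi step in Case~3 suffices—no iterated descent into the bracket tree of $b$, and no auxiliary depth parameter. Writing $S(b)=(x,y)$, the inner brackets $[\phi(a),\phi(y)]$ and $[\phi(a),\phi(x)]$ have smaller total length, so induction yields Lyndon elements $c$ satisfying (B2). For the resulting outer bracket $[\phi(c),\phi(x)]$ one checks $c<x$ via initial letters ($init(c)=\inf(ay)\le init(a)<init(b)=init(x)$, strict because $init(b)\notin\zeta(a)$), and then
\[
std(c)\,std(x)\ \ge\ std(a)\,std(y)\,std(x)\ >\ std(a)\,std(x)\,std(y)\ =\ std(a)\,std(b)
\]
by (B2) and the Lyndon swap inequality~\eqref{e:lyndon}. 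Hence $(c,x)\prec(a,b)$ via the reversed $std$-product criterion even though $|cx|=|ab|$. The other outer bracket (which does involve $y>b$) is handled the same way, and the final bound (B1), namely $d<x<b$, falls out at the end. The missing idea, then, is precisely this choice of $std(a)std(b)$, ordered in reverse, as the secondary induction parameter.
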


\begin{proof}
The first step is to define an order $\prec$ on the set of pairs $(a,b) \in LE(M)\times LE(M)$ satisfying $a<b$. We say $(a,b) \prec (a',b')$ if \begin{itemize} \item $|ab| < |a'b'|$, or \item $|ab|=|a'b'|$ and $std(a)std(b) > std(a')std(b')$, or \item $std(a)std(b)=std(a')std(b')$ and $b<b'$. \end{itemize} Note that the second criterion is possibly the reverse of what one might expect. We shall prove Proposition \ref{p:key} by using induction on the order given by $\prec$. We drop the subscript of $\phi_X$ for the remainder of this proof. The base case is when $(a,b)=(v_{n-1},v_n)$ and is trivial. The inductive step splits into two cases.

\textbf{Case 1.} $init(b) \in \zeta(a)$.

If $|a|=1$, then Theorem \ref{t:standard} tells us $S(ab)=(a,b)$, and $[\phi(a),\phi(b)]=\phi(ab)$ by definition. Also, $ab<b$ by part 2 of Theorem \ref{t:ledefinitions}, and $std(ab) \geq std(a)std(b)$.

If $|a|>1$, let $S(a)=(x,y)$. This now splits into two subcases. 

\textbf{Subcase 1.} $y \geq b$. By Theorem \ref{t:standard}, we have $S(ab)=(a,b)$, and we are in exactly the same situation as case 1.

\textbf{Subcase 2.} $y<b$. We use the Jacobi identity in L: \begin{align*} [\phi(a),\phi(b)]&=[[\phi(x),\phi(y)],\phi(b)]\\ &=-[[\phi(b),\phi(x)],\phi(y)]-[[\phi(y),\phi(b)],\phi(x)] \\&=[[\phi(x),\phi(b)],\phi(y)]+[\phi(x),[\phi(y),\phi(b)]] \end{align*}

We look at the two parts of this sum separately.
$$\textbf{The }[[\phi(x),\phi(b)],\phi(y)] \textbf{ part:}$$ Note that $x<a<b$, and $|xb|<|ab|$, so we have $(x,b)\prec (a,b)$. Therefore by induction there exists a decomposition: $$[\phi(x),\phi(b)]=\sum_{c\in I_{x,b}} \alpha_c\phi(c)$$ with each $c$ satisfying (B1)--(B3) with respect to $(x,b)$. Then for each $c$, if $y<c$ then \begin{align*} std(y)std(c) &\geq std(y)std(x)std(b) &&\text{by (B2)}\\ &> std(x)std(y)std(b) &&\text{by (8)} \\ &=std(a)std(b), \end{align*} so that $(y,c) \prec (a,b)$. If $y=c$ then $[\phi(y),\phi(c)]=0$. If $c<y$ then as $std(c)\geq std(x)std(b)$ and $std(y)<std(b)$ we have: \begin{align*} std(c)std(y) &\geq std(x)std(b)std(y) \\ &> std(x)std(y)std(b) \\ &=std(a)std(b), \end{align*} so that $(c,y) \prec (a,b).$ In any case, by induction there exists a decomposition: $$[\phi(c),\phi(y)]=\sum_{d \in I_{c,y}} \beta_d\phi(d)$$ with each $d$ satisfying (B1)--(B3) with respect to either $(y,c)$ or $(c,y)$. As the $c$ here satisfies (B1)--(B3) with respect to $(x,b)$ one can check that each 
$d$ also satisfies (B1)--(B3) with respect to $(a,b)$ and we have the required decomposition:
$$[[\phi(x),\phi(b)],\phi(y)]=\sum_{c \in I_{x,b}}\sum_{d \in I_{c,y}} \alpha_c\beta_d \phi(d).$$ 

 $$\textbf{The } [\phi(x),[\phi(y),\phi(b)]] \textbf{ part:}$$ Since $y<b$ and $|yb|<|ab|$ there exists a decomposition $[\phi(y),\phi(b)]=\sum_{c\in I_{y,b}} \alpha_cc$ with each $c$ satisfying (B1)--(B3) with respect to $(y,b)$. Also for each $c$ we have \begin{align*} std(c) &\geq std(y)std(b) \\ &\geq std(y) \\ &>std(x) ,\end{align*} so that $x<c$ and \begin{align*} std(x)std(c) &\geq std(x)std(y)std(b) \\ &=std(a)std(b). \end{align*} Hence $(x,c) \prec (a,b)$, and by induction we have the decomposition $$[\phi(x),\phi(c)]=\sum_{d\in I_{x,c}} \beta_d \phi(d)$$ with each $d$ satisfying (B1)--(B3) with respect to $(x,c)$. As $c <b$ and $std(x)std(c) \geq std(a)std(b)$ each $d$ also satisfies (B1)--(B3) with respect to $(a,b)$. This gives our required decomposition $$[\phi(x),[\phi(y),\phi(b)]]=\sum_{c \in I_{y,b}}\sum_{d \in I_{x,c}} \alpha_c\beta_d \phi(d)$$
Adding the above two parts gives the required decomposition of $[\phi(a),\phi(b)],$ and finishes the inductive step in this first case.

\textbf{Case 2.} $init(b) \not \in \zeta(a)$.

If $|b|=1$ then $[\phi(a),\phi(b)]=0$ by Lemma \ref{l:zero}, and we are done. If $|b|>1$, then we write $S(b)=(x,y)$. By the Jacobi identity in $L$:
\begin{align*} [\phi(a),\phi(b)]&=[\phi(a),[\phi(x),\phi(y)]] \\ &=-[\phi(x),[\phi(a),\phi(y)]] - [\phi(y),[\phi(x),\phi(a)]] \\ &=[[\phi(a),\phi(y)],\phi(x)]-[[\phi(a),\phi(x)],\phi(y)]. \end{align*}

Again we look at the two separate parts in this sum. First, $[[\phi(a),\phi(y)],\phi(x)]$. As $(a,y) \prec (a,b)$ by induction there exists a decomposition $$[\phi(a),\phi(y)] = \sum_{c \in I_{a,y}} \alpha_c \phi(c),$$ with each $c$ satisfying (B1)--(B3) with respect to $(a,y)$. We would like to show that $c <x$ and $(c,x) \prec (a,b)$. Note that the smallest letter (with respect to the ordering $v_1 < v_2 < \cdots <v_n$) of any Lyndon word must be its initial letter, otherwise there would be a conjugate of that word that is smaller with respect to the ordering of $M$. Let $inf(g)$ denote the smallest letter in $supp(g)$ for any $g \in M$. As $\|c\|=\|ay\|$, we have: $$init(c)=inf(c)=inf(ay) \leq inf(a)=init(a) < init(b)=init(x).$$ The strict inequality holds in the above as $a <b$ and $init(a)\neq init(b)$ because $init(b) \not \in \zeta(a)$. Hence $c<x$, and \begin{align*} std(c)std(x) &\geq std(a)std(y)std(x) \\ &> std(a)std(x)std(y) \\ &=std(a)std(b) \end{align*} Therefore $(c,x) \prec (a,b)$, and there is a 
decomposition $$[\phi(c),\phi(x)] = \sum_{d \in I_{c,x}} \beta_d \phi(d),$$ with each $d$ satisfying the required (B1)--(B3) with respect to $(c,x)$. Once again it is not hard to check that $d$ also satisfies (B1)--(B3) with respect to $(a,b)$.  For $[[\phi(a),\phi(x)],\phi(y)]$ the same methods apply as before and we will spare the reader any further details. 

This completes the induction proof. The only part we have not covered is the fact that the sets $I_{a,b}$ and $\{\alpha_c\}_{c \in I_{a,b}}$ are independent of $X$ and $L$, however this is clear as we did not need use our choice of $L$ or $X$ at any point in the proof. \end{proof}

Proposition \ref{p:key} implies that the image of $\ell$ in $L_\G$ is closed under the bracket operation, so is a subalgebra of $L_\G$. As $L_\G$ is the smallest subalgebra of $\mathcal{L}(U)$ containing $\{\mathbf{v_1,\ldots,v_n}\}$, and this set is in the image of $\ell$, this means that $\ell$ is surjective. We have shown in Corollary \ref{c:inj} that $\ell$ is also injective.

\begin{corollary}
The map $\ell:\mathbb{Z}[LE(M)] \to L_\G$ is bijective.
\end{corollary}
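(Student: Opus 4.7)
The plan is to combine the two main results already in hand: injectivity is immediate from Corollary \ref{c:inj}, so the entire task reduces to proving surjectivity. For this I want to use Proposition \ref{p:key} applied in the special case $L = L_\G$, $X = V = \{\mathbf{v_1},\ldots,\mathbf{v_n}\}$, where of course the hypothesis $[x_i,x_j]=0$ whenever $[v_i,v_j]=1$ is satisfied in $\mathcal{L}(U)$ because the defining relations of $M$ make $\mathbf{v_i}\mathbf{v_j} = \mathbf{v_j}\mathbf{v_i}$ in $U$ in exactly that case. In this setting $\phi_X$ is precisely $\ell$.

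Next I would show that the image $\ell(\mathbb{Z}[LE(M)])$ is a Lie subalgebra of $L_\G$. It is automatically a $\mathbb{Z}$-submodule, so by bilinearity and antisymmetry of the bracket it suffices to check it is closed under brackets of the form $[\ell(a),\ell(b)]$ for Lyndon elements $a,b$. The case $a=b$ gives zero, and using $[\ell(b),\ell(a)] = -[\ell(a),\ell(b)]$ we may assume $a<b$. Proposition \ref{p:key} then writes this bracket as a $\mathbb{Z}$-linear combination of elements $\ell(c)$ with $c \in LE(M)$, so the bracket lies in the image.

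Finally, since $L_\G$ is by definition the smallest Lie subalgebra of $\mathcal{L}(U)$ containing each $\mathbf{v_i}$, and each $\mathbf{v_i}=\ell(\mathbf{v_i})$ lies in the image (which we have just shown is a Lie subalgebra), we conclude $L_\G \subseteq \ell(\mathbb{Z}[LE(M)])$. Combined with injectivity from Corollary \ref{c:inj}, this gives bijectivity.

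I do not anticipate any genuine obstacle here, since Proposition \ref{p:key} is precisely the engine needed: it was set up explicitly so that brackets of $\ell$-images of Lyndon elements decompose back into $\ell$-images of Lyndon elements. The only subtlety worth flagging in writing is to remember to handle the $a\geq b$ case by antisymmetry before invoking Proposition \ref{p:key}, which assumes $a<b$.
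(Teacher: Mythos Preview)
Your proposal is correct and follows essentially the same argument as the paper: use Proposition~\ref{p:key} (specialised to $X=V$, so $\phi_X=\ell$) to see that the image of $\ell$ is closed under brackets, hence is a Lie subalgebra of $\mathcal{L}(U)$ containing the generators $\mathbf{v_i}$, which forces it to contain $L_\G$; combine with Corollary~\ref{c:inj} for injectivity. The extra care you take with the $a=b$ and $a>b$ cases via antisymmetry is a reasonable detail to spell out, though the paper leaves this implicit.
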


For our toils, we can now show that $L_\G$ satisfies the following universal property:

\begin{theorem}\label{t:univprop}
Let $\G$ be a graph with vertices $v_1,\ldots,v_n$. Let $L$ be a Lie algebra, and suppose that $X=\{x_1,\ldots,x_n\}$ is a subset of $L$ that satisfies: $$[x_i,x_j]=0 \text{if $v_i$ and $v_j$ are connected by an edge in $\G$.}$$ Then there is a unique algebra homomorphism $\psi_X:L_\G \to L$ such that $$\psi_X(\mathbf{v_i})=x_i \text{ for $1 \leq i \leq r$.}$$ 
\end{theorem}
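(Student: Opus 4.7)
The plan is to define $\psi_X$ directly via the bijection $\ell$: set $\psi_X = \phi_X \circ \ell^{-1} : L_\G \to L$. By Definition \ref{d:phi} this is a $\mathbb{Z}$-module homomorphism sending $\mathbf{v_i}$ to $x_i$, and uniqueness is immediate from the fact that $\{\mathbf{v_1},\ldots,\mathbf{v_n}\}$ generates $L_\G$ as a Lie algebra, so any two algebra homomorphisms agreeing on these generators coincide. The whole content of the theorem therefore lies in checking that $\psi_X$ respects brackets.

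Since $\ell(LE(M))$ is a $\mathbb{Z}$-module basis of $L_\G$, by bilinearity of the bracket it suffices to verify
\[ \psi_X([\ell(a),\ell(b)]) = [\phi_X(a),\phi_X(b)] \]
for all Lyndon elements $a, b \in LE(M)$. The case $a = b$ is trivial (both sides vanish), and the case $a > b$ reduces to the case $a < b$ by antisymmetry of the bracket in both $L_\G$ and $L$. So the whole question reduces to the case $a < b$.

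This is exactly where Proposition \ref{p:key} does the work. Applied in $L_\G$ with $X = V$, it furnishes a subset $I_{a,b} \subset LE(M)$ and integers $\{\alpha_c\}$ such that
\[ [\ell(a),\ell(b)] = [\phi_V(a),\phi_V(b)] = \sum_{c \in I_{a,b}} \alpha_c\, \phi_V(c) = \sum_{c \in I_{a,b}} \alpha_c\, \ell(c), \]
and the \emph{key point} is the final clause of Proposition \ref{p:key}: the indexing set $I_{a,b}$ and the coefficients $\alpha_c$ are independent of the ambient Lie algebra and of the choice of $X$. Therefore the very same decomposition, applied now in $L$ (valid because $X$ satisfies the hypothesis $[x_i,x_j]=0$ whenever $[v_i,v_j]=1$), gives
\[ [\phi_X(a),\phi_X(b)] = \sum_{c \in I_{a,b}} \alpha_c\, \phi_X(c). \]
Applying $\psi_X = \phi_X \circ \ell^{-1}$ to the first displayed equation yields
\[ \psi_X([\ell(a),\ell(b)]) = \sum_{c \in I_{a,b}} \alpha_c\, \phi_X(c) = [\phi_X(a),\phi_X(b)], \]
as required.

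The main conceptual hurdle is really packaged into Proposition \ref{p:key}: the proof of this theorem is essentially the observation that once we know the bracket of two basis elements can be expanded in the basis $\ell(LE(M))$ using universal coefficients, the map $\phi_X \circ \ell^{-1}$ automatically becomes an algebra homomorphism. No new combinatorial work is needed here.
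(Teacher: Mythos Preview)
Your proof is correct and follows essentially the same route as the paper's own argument: define $\psi_X=\phi_X\circ\ell^{-1}$, reduce to checking brackets on the basis $\ell(LE(M))$ with $a<b$, and invoke Proposition~\ref{p:key} together with the independence of $I_{a,b}$ and $\{\alpha_c\}$ from $L$ and $X$ to match the two expansions. The paper's proof is almost verbatim the same, only slightly more terse.
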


\begin{proof}
As $L_\G$ is generated by $V$, if such a map exists then it is unique. Let $\psi_X=\phi_X \ell^{-1}$. As $\psi_X$ is a $\mathbb{Z}$--module morphism, we only need to check the bracket operation on the basis $\ell(LE(M))$ of $L_\G$. Let $a,b \in LE(M)$ and without loss of generality suppose that $a <b$. By Proposition \ref{p:key} there exists $I \subset LE(M)$ and a set of integers $\{\alpha_c\}_{c \in I}$ such that \begin{align*} [\ell(a),\ell(b)]&=\sum_{c\in I}\alpha_c \ell(c) \\ \text{and } [\phi_X(a),\phi_X(b)]&=\sum_{c \in I} \alpha_c \phi_X(c). \end{align*}
Therefore \begin{align*} \psi_X([\ell(a),\ell(b)])&=\psi_X(\sum_{c\in I}\alpha_c \ell(c)) \\ &=\sum_{c\in I}\alpha_c\psi_X\ell(c) \\ &=\sum_{c \in I} \alpha_c\phi_X(c) \\ &=[\phi_X(a),\phi_X(b)] \\ &=[\psi_X(\ell(a)),\psi_X(\ell(b))]. \qedhere \end{align*}
\end{proof}

\section{An isomorphism between $L_\G$ and the LCS algebra of $A_\G$}\label{s:end}

The algebra $L_\G$ inherits a grading from $\mathcal{L}(U)$ by letting $L_{\G,i}=L_\G \cap \mathcal{L}(U)_i $. We note that $$L_{\G,i}=\langle \ell(a) : a \in LE(M), |a|=i \rangle.$$

Previously we defined $\mathcal{C}$ and $\mathcal{D}$ to be the linear filtrations of $A_\G$ given by the lower central series, and the central series $\{D_i\}$ given in section \ref{s:Magnus map} respectively.

\begin{lemma}Let $X=\{ v_i\g_1(A_\G) : 1 \leq i \leq n \} \subset L_\mathcal{C}$. The algebra homomorphism $\psi_X:L_\G \to L_\mathcal{C}$ given by Theorem \ref{t:univprop} respects the gradings of $L_\G$ and $L_\mathcal{C}$. \end{lemma}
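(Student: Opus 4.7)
The plan is to reduce the claim to the behaviour of the auxiliary map $\phi_X$ on Lyndon elements, since by construction $\psi_X = \phi_X \circ \ell^{-1}$ and the set $\{\ell(a) : a \in LE(M), |a|=i\}$ is a $\mathbb{Z}$--basis of $L_{\G,i}$. It therefore suffices to show that for every Lyndon element $a \in LE(M)$ of length $i$, the image $\phi_X(a)$ lies in $L_{\mathcal{C},i} = \gamma_i(A_\G)/\gamma_{i+1}(A_\G)$.

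I would prove this by induction on $|a|$. The base case $|a|=1$ is immediate from the definition $\phi_X(\mathbf{v_j}) = v_j\gamma_2(A_\G) \in L_{\mathcal{C},1}$. For the inductive step, let $a \in LE(M)$ with $|a| > 1$ and take its standard factorisation $S(a) = (x,y)$, so that $|x| + |y| = |a|$ and both $x, y$ are Lyndon elements of strictly smaller length. By the inductive hypothesis, $\phi_X(x) \in L_{\mathcal{C},|x|}$ and $\phi_X(y) \in L_{\mathcal{C},|y|}$. Since $\phi_X(a) = [\phi_X(x), \phi_X(y)]$ by Definition~\ref{d:phi}, and the bracket on $L_\mathcal{C}$ induced by a central filtration sends $L_{\mathcal{C},j} \times L_{\mathcal{C},k}$ to $L_{\mathcal{C},j+k}$ (this is precisely the grading condition built into the construction of Section~\ref{s:Lie} and reflects that $[\gamma_j,\gamma_k] \subset \gamma_{j+k}$), we conclude $\phi_X(a) \in L_{\mathcal{C},|x|+|y|} = L_{\mathcal{C},|a|}$.

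Combining these two observations, $\psi_X(L_{\G,i}) = \phi_X\ell^{-1}(L_{\G,i}) \subset L_{\mathcal{C},i}$ for every $i$, which is exactly the statement that $\psi_X$ respects the gradings. There is no real obstacle here: the argument is a straightforward induction that exploits two facts already in hand, namely the recursive definition of $\phi_X$ along standard factorisations and the graded nature of the Lie bracket on a Lie algebra built from a central filtration.
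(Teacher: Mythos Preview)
Your proof is correct and follows essentially the same approach as the paper: both argue by induction on the length of a Lyndon element, using the recursive definition along the standard factorisation and the fact that the bracket on $L_\mathcal{C}$ respects the grading. The only cosmetic difference is that you phrase the induction in terms of $\phi_X$ while the paper phrases it in terms of $\psi_X\circ\ell$, but since $\psi_X=\phi_X\circ\ell^{-1}$ these are the same computation.
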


\begin{proof} We show that $\psi_X(L_{\G,k}) \subset L_{\mathcal{C},k}$ by induction on $k$. As $\psi_X(\mathbf{v_i})=v_i\g_1(A_\G)$, and $L_{\G,1}$ is spanned by $\{\mathbf{v_1},\ldots,\mathbf{v_n}\}$, the case $k=1$ holds. For the inductive step, pick $a \in LE(M)$ such that $|a|=k$. Let $S(a)=(b,c)$ be the standard decomposition of $a$, with $|b|=i$, $|c|=j$, and $i+j=k$. Then by induction $\psi_X(\ell(b)) \in L_ {\mathcal{C},i}$ and $\psi_X(c) \in L_{\mathcal{C},j}$, hence \begin{equation*} \psi_X(\ell(a))=[\psi_X(\ell(b)),\psi_X(\ell(c))]\in L_{\mathcal{C},i+j}=L_{\mathcal{C},k}. \qedhere \end{equation*} \end{proof}

By Proposition \ref{p:dinfo} we know that $\g_k(A_\G) \subset D_k$ for all $k$. Hence by Proposition~\ref{p:func} the identity map $A_\G\to A_\G$ induces a graded algebra homomorphism ${\alpha:L_\mathcal{C} \to L_\mathcal{D}}.$ 

\begin{lemma} The mapping $ g D_{k+1} \mapsto \mu(g)_k$  induces a graded algebra homomorphism $\beta:L_\mathcal{D} \to \mathcal{L}(U)$. \end{lemma}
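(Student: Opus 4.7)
The plan is to build $\beta$ componentwise, defining $\beta_k \colon D_k/D_{k+1} \to U_k$ by $\beta_k(gD_{k+1}) = \mu(g)_k$, and then to verify it is a graded Lie algebra homomorphism using the power series identities already established in Proposition~\ref{p:identities}.

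First I would check that $\beta_k$ is well defined. If $g,g' \in D_k$ represent the same coset, write $g' = gh$ with $h \in D_{k+1}$, so $\mu(g) \in 1 + X^k$ and $\mu(h) \in 1 + X^{k+1}$. The degree-$k$ component of $\mu(g') = \mu(g)\mu(h)$ then picks up exactly $\mu(g)_k \cdot 1 = \mu(g)_k$, with every remaining cross term landing in degree $\geq k+1$. The same bookkeeping shows $\mu(gh)_k = \mu(g)_k + \mu(h)_k$ for any $g,h \in D_k$, since only $\mu(g)_k \cdot 1$ and $1 \cdot \mu(h)_k$ contribute at degree $k$. Hence $\beta_k$ is a homomorphism into the abelian group $U_k$, and because $L_\mathcal{D} = \bigoplus_k D_k/D_{k+1}$ and the image sits in $U_k = \mathcal{L}(U)_k$, the pieces assemble into a graded $\mathbb{Z}$--module homomorphism $\beta \colon L_\mathcal{D} \to \mathcal{L}(U)$.

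The substantive step is showing that $\beta$ respects brackets. For $g \in D_k$ and $h \in D_l$ one has $[g,h] \in D_{k+l}$, and the Lie bracket in $L_\mathcal{D}$ of $gD_{k+1}$ and $hD_{l+1}$ is represented by $[g,h]D_{k+l+1}$. What I need to prove is
$$\mu([g,h])_{k+l} = \mu(g)_k \mu(h)_l - \mu(h)_l \mu(g)_k,$$
since the right-hand side is exactly the bracket of $\beta_k(gD_{k+1})$ and $\beta_l(hD_{l+1})$ in $\mathcal{L}(U)$. Because $\mu$ is a group homomorphism, $\mu([g,h])$ equals the group-theoretic commutator $[\mu(g),\mu(h)]$ in $U^*$. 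Writing $x = \mu(g) - 1 \in X^k$ and $y = \mu(h) - 1 \in X^l$, equation~\eqref{e:c1} of Proposition~\ref{p:identities} gives
$$\mu([g,h]) = 1 + (xy-yx)\sum_{i,j \geq 0}(-1)^{i+j} x^i y^j.$$
The lowest degree component of $xy - yx$ lies in degree $k+l$ and equals $\mu(g)_k \mu(h)_l - \mu(h)_l \mu(g)_k$; every other term in the expansion carries an extra factor of $x$ or $y$ and therefore lives in $X^{k+l+1}$ or deeper. Extracting the degree-$(k+l)$ component yields the required identity.

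I do not anticipate a real obstacle: the statement is essentially a repackaging of parts (5) and (6) of Lemma~\ref{l:derivations} together with the commutator identity~\eqref{e:c1}. The only point requiring care is the degree accounting, namely verifying that the cross terms in the additivity calculation and the higher-order contributions from $(xy-yx)\sum(-1)^{i+j}x^iy^j$ really do lie strictly above the degree being tracked, so that only the leading component survives.
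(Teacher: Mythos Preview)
Your proof is correct and follows essentially the same route as the paper. The paper's version is terser: it observes that $D_{k+1}$ is the kernel of $g\mapsto\mu(g)_k$ (already noted in the proof of Proposition~\ref{p:dinfo}) and then cites parts (1), (5) and (6) of Lemma~\ref{l:derivations} for the homomorphism and bracket properties, whereas you unpack those citations directly---in particular, you go back to equation~\eqref{e:c1} rather than invoking Lemma~\ref{l:derivations}(5)--(6), but that lemma is itself proved from~\eqref{e:c1}, so the content is the same.
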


\begin{proof} The group $D_{k+1}$ is the kernel of the homomorphism $D_k \to U_k$ given by $g \mapsto \mu(g)_k$. Therefore the induced map $\beta:L_\mathcal{D} \to \mathcal{L}(U)$ is well-defined. As $\mu(g)_k \in \mathcal{L}(U)_k$, this map also respects gradings. The fact that $\beta$ is a homomorphism is implied by parts (1), (5) and (6) of Lemma \ref{l:derivations}. \end{proof}

We now have a chain of graded algebra homomorphisms $$L_\G \xrightarrow{\psi_X} L_\mathcal{C} \xrightarrow{\alpha} L_\mathcal{D} \xrightarrow{\beta} \mathcal{L}(U),$$ which allows us to prove our main theorem.

\begin{theorem} \label{t:main}
$L_\G$, $L_\mathcal{C}$, and $L_\mathcal{D}$ are isomorphic as graded Lie algebras. Furthermore, the central filtrations $\mathcal{C}$ and $\mathcal{D}$ are equal, so that $\gamma_k(A_\G)=D_k$ for all $k \geq 1$.
\end{theorem}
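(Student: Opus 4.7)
The plan is to check that the composition
$$L_\G \xrightarrow{\psi_X} L_\mathcal{C} \xrightarrow{\alpha} L_\mathcal{D} \xrightarrow{\beta} \mathcal{L}(U)$$
is the inclusion $\iota: L_\G \hookrightarrow \mathcal{L}(U)$, and then to feed this identity back into the definitions in order to force $\gamma_k(A_\G) = D_k$. Since all three maps are graded Lie algebra homomorphisms and $L_\G$ is generated by $\{\mathbf{v_1}, \ldots, \mathbf{v_n}\}$, it suffices to verify the claim on generators. Tracking through the definitions,
$$\mathbf{v_i} \mapsto v_i \gamma_2(A_\G) \mapsto v_i D_2 \mapsto \mu(v_i)_1 = \mathbf{v_i},$$
so $\beta \circ \alpha \circ \psi_X = \iota$.

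From this identity I can extract structural consequences. Because $\iota$ is injective, $\psi_X$ is injective. By Proposition \ref{p:gen} applied to $A_\G$ with its standard generators, $L_\mathcal{C}$ is generated as a Lie algebra by the degree-$1$ classes $\{v_i \gamma_2(A_\G)\} = \{\psi_X(\mathbf{v_i})\}$, so $\psi_X$ is also surjective and is therefore a graded isomorphism. Moreover, $\beta$ is injective on each graded piece by construction: the kernel of $D_k \to U_k$, $g \mapsto \mu(g)_k$, is exactly $D_{k+1}$.

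The remaining task is to show $\gamma_k(A_\G) = D_k$ for every $k$, which I would do by induction on $k$, the base case being $\gamma_1 = A_\G = D_1$. The inclusion $\gamma_k \subset D_k$ is Proposition \ref{p:dinfo}(3). For the reverse, assume $\gamma_k = D_k$ and pick $g \in D_{k+1}$. Then $g \in D_k = \gamma_k$, so $g$ determines a class $[g]_\mathcal{C} \in L_{\mathcal{C}, k}$. Using surjectivity of $\psi_X$, choose $y \in L_{\G, k}$ with $\psi_X(y) = [g]_\mathcal{C}$. Then
$$\iota(y) = \beta(\alpha(\psi_X(y))) = \beta([g]_\mathcal{D}) = \mu(g)_k = 0,$$
since $g \in D_{k+1}$. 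Injectivity of $\iota$ gives $y = 0$, hence $[g]_\mathcal{C} = 0$, so $g \in \gamma_{k+1}$. Once $\gamma_k = D_k$ for all $k$, the filtrations $\mathcal{C}$ and $\mathcal{D}$ coincide, $L_\mathcal{C} = L_\mathcal{D}$ as graded Lie algebras with $\alpha$ the identity map, and $\psi_X$ supplies the required isomorphism between all three algebras. The main obstacle is to arrange the diagram so that the injectivity of the outer map $\iota$ and the surjectivity of $\psi_X$ (from Proposition \ref{p:gen}) can be played off against each other in the induction; once $\beta \circ \alpha \circ \psi_X = \iota$ is in hand, the remaining deductions are essentially formal.
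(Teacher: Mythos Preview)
Your proof is correct and follows essentially the same route as the paper: verify the composition $\beta\alpha\psi_X$ is the inclusion $L_\G \hookrightarrow \mathcal{L}(U)$ on generators, deduce that $\psi_X$ is injective (hence an isomorphism by Proposition~\ref{p:gen}), and then run the induction on $k$ to upgrade $\gamma_k \subset D_k$ to equality. The only cosmetic difference is that the paper packages the inductive step by first observing that $\alpha$ is injective on each graded piece and then noting $\alpha_k$ becomes surjective once $\gamma_k = D_k$, whereas you argue directly with $\iota$ and $\psi_X$; the content is the same.
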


\begin{proof}
We start by calculating the image of $\{\mathbf{v_1},\ldots,\mathbf{v_n}\}$ under $\beta\alpha\psi_X$. We have \begin{align*} \beta\alpha\psi_X(\mathbf{v_i})&=\beta\alpha(v_i\g_1(A_\G)) \\ &=\beta(v_iD_1) \\ &=\mu(v_i)_1 \\&=\mathbf{v_i}. \end{align*}

Therefore the image of $\beta\alpha\psi_X$ is $L_\G$, and as $\beta\alpha\psi_X$ takes the generators to themselves, it is the identity map on $L_\G$. In particular, $\psi_X$ must be injective. By Proposition \ref{p:gen}, the algebra $L_\mathcal{C}$ is generated by the set $X$, hence $\psi_X$ is also surjective, and is an isomorphism. We now know that $L_\mathcal{C}$ and $L_\G$ are isomorphic as graded Lie algebras. Then $\beta\alpha$ maps $L_\mathcal{C}$ isomorphically onto $L_\G$, so the map $\alpha$ is also injective. Looking at each graded piece, each homomorphism $$\g_k(A_\G)/\g_{k+1}(A_\G) \xrightarrow{\alpha_k} D_k/D_{k+1}$$ is injective. We shall use this to show that $\gamma_k(A_\G)=D_k$ by induction on $k$, and this will complete the proof of the main theorem. Note that $D_1=\gamma_1(A_\G)$ by definition. Suppose that $\gamma_k(A_\G)=D_k$. Then $\alpha_k$ is also surjective, so is an isomorphism. If $g \in D_k=\gamma_k(A_\Gamma)$, then \begin{align*} g \in D_{k+1} &\iff gD_{k+1}=1 && \text{in $D_k/D_{k+1}$} \\ &\iff \alpha_k^{-1}(gD_{k+1})=1 &&\text{in $\g_k(A_\G)/\gamma_{k+1}(A_\G)$} \\  &\iff g\gamma_{k+1}(A_\G)=1 &&\text{in $\g_k(A_\G)/\gamma_{k+1}(A_\G)$} \\ &\iff g \in \gamma_{k+1}(A_\G). \end{align*} Hence $\gamma_{k+1}(A_\G)=D_{k+1}$. 
\end{proof}

We conclude with an important consequence of Theorem \ref{t:main} and Proposition \ref{p:dinfo}:

\begin{theorem} \label{t:tfn1}
If $k \in \mathbb{N}$, then $\g_k(A_\G)/\g_{k+1}(A_\G)$ is free-abelian, and $A_\G/\g_k(A_\G)$ is torsion-free nilpotent.
\end{theorem}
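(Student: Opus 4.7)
The proof is essentially a direct combination of results already in hand, so the plan is short. The main identification to exploit is Theorem~\ref{t:main}, which tells us $\gamma_k(A_\G) = D_k$ for every $k \geq 1$. Once this is in place, everything about the lower central series can be read off from the corresponding statement about $\mathcal{D}$.

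First I would handle the free-abelian claim. By Theorem~\ref{t:main}, $\gamma_k(A_\G)/\gamma_{k+1}(A_\G) = D_k/D_{k+1}$, and Proposition~\ref{p:dinfo}(2) asserts that $D_k/D_{k+1}$ is a finitely generated free abelian group. This gives the first half of the statement immediately.

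For the torsion-free nilpotent claim, I would invoke Proposition~\ref{p:cf}, applied to the central filtration $\mathcal{D}$. The hypotheses are already verified in the text: $\mathcal{D}$ is a central filtration of $A_\G$ by Proposition~\ref{p:dinfo}(1), each consecutive quotient $D_k/D_{k+1}$ is free abelian by Proposition~\ref{p:dinfo}(2), and $\mathcal{D}$ is separating because the Magnus map $\mu$ is injective and $g \in \cap_k D_k$ forces $\mu(g) = 1$. Hence Proposition~\ref{p:cf}(2) yields that $A_\G/D_k$ is torsion-free nilpotent, and substituting $D_k = \gamma_k(A_\G)$ from Theorem~\ref{t:main} gives the desired conclusion.

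There is no real obstacle here; every ingredient has already been assembled, and the only content of the proof is the observation that the identification $\gamma_k(A_\G) = D_k$ transports the structural information already proved for $\mathcal{D}$ onto the lower central series.
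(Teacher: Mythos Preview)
Your proposal is correct and matches the paper's intended argument: the paper presents Theorem~\ref{t:tfn1} as an immediate consequence of Theorem~\ref{t:main} and Proposition~\ref{p:dinfo}, with Proposition~\ref{p:cf} (already applied to $\mathcal{D}$ earlier in the text) supplying the torsion-free nilpotent conclusion. There is nothing to add.
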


\begin{example}
Let $\G$ be the small example graph given in Figure 1. We have already worked out the bracketing of Lyndon elements of length 3 in example \ref{e:bracket}. The isomorphism given in Theorem~\ref{t:main} tells us that $\g_3(A_\G)/\g_4(A_\G)$ is freely generated by $[v_1,[v_1,v_2]]\g_4(A_\G)$, $[v_1,[v_1,v_3]]\g_4(A_\G)$, $[[v_1,v_2],v_2]\g_4(A_\G)$, $[[v_1,v_3],v_2]\g_4(A_\G)$, and $[[v_1,v_3],v_3]\g_4(A_\G)$.
\end{example}

\bibliography{bib}{}

\begin{thebibliography}{10}

\bibitem{MR3104553}
Ian Agol.
\newblock The virtual {H}aken conjecture.
\newblock {\em Doc. Math.}, 18:1045--1087, 2013.
\newblock With an appendix by Agol, Daniel Groves, and Jason Manning.

\bibitem{MR979493}
Nicolas Bourbaki.
\newblock {\em Lie groups and {L}ie algebras. {C}hapters 1--3}.
\newblock Elements of Mathematics (Berlin). Springer-Verlag, Berlin, 1989.

\bibitem{CFL}
K.-T. Chen, R.~H. Fox, and R.~C. Lyndon.
\newblock Free differential calculus. {IV}. {T}he quotient groups of the lower
  central series.
\newblock {\em Ann. of Math. (2)}, 68:81--95, 1958.

\bibitem{Droms}
Carl Droms.
\newblock {\em Graph groups}.
\newblock PhD thesis, Syracuse University, 1983.

\bibitem{DK1}
G.~Duchamp and D.~Krob.
\newblock The lower central series of the free partially commutative group.
\newblock {\em Semigroup Forum}, 45(3):385--394, 1992.

\bibitem{DK2}
G.~Duchamp and D.~Krob.
\newblock Free partially commutative structures.
\newblock {\em J. Algebra}, 156(2):318--361, 1993.

\bibitem{Green}
Elisabeth~R. Green.
\newblock {\em Graph products of groups}.
\newblock PhD thesis, The University of Leeds, 1990.

\bibitem{KL}
D.~Krob and P.~Lalonde.
\newblock Partially commutative {L}yndon words.
\newblock In {\em S{TACS} 93 ({W}{\"u}rzburg, 1993)}, volume 665 of {\em
  Lecture Notes in Comput. Sci.}, pages 237--246. Springer, Berlin, 1993.

\bibitem{L1}
Pierre Lalonde.
\newblock Bases de {L}yndon des alg{\`e}bres de {L}ie libres partiellement
  commutatives.
\newblock {\em Theoret. Comput. Sci.}, 117(1-2):217--226, 1993.
\newblock Conference on Formal Power Series and Algebraic Combinatorics
  (Bordeaux, 1991).

\bibitem{L2}
Pierre Lalonde.
\newblock Lyndon heaps: an analogue of {L}yndon words in free partially
  commutative monoids.
\newblock {\em Discrete Math.}, 145(1-3):171--189, 1995.

\bibitem{MR2922714}
Peter Linnell, Boris Okun, and Thomas Schick.
\newblock The strong {A}tiyah conjecture for right-angled {A}rtin and {C}oxeter
  groups.
\newblock {\em Geom. Dedicata}, 158:261--266, 2012.

\bibitem{Lothaire}
M.~Lothaire.
\newblock {\em Combinatorics on words}.
\newblock Cambridge Univ Pr, 1997.

\bibitem{MKS}
Wilhelm Magnus, Abraham Karrass, and Donald Solitar.
\newblock {\em Combinatorial group theory}.
\newblock Dover Publications Inc., New York, revised edition, 1976.
\newblock Presentations of groups in terms of generators and relations.

\bibitem{MR2207874}
Stefan Papadima and Alexander~I. Suciu.
\newblock Algebraic invariants for right-angled {A}rtin groups.
\newblock {\em Math. Ann.}, 334(3):533--555, 2006.

\bibitem{MR3145135}
Richard~D. Wade.
\newblock Johnson homomorphisms and actions of higher-rank lattices on
  right-angled {A}rtin groups.
\newblock {\em J. Lond. Math. Soc. (2)}, 88(3):860--882, 2013.

\end{thebibliography}
\bibliographystyle{plain}

\end{document}